\documentclass[10pt]{article}

\usepackage{amsmath,amssymb,amsthm}
\usepackage[utf8]{inputenc}
\usepackage{geometry}
\usepackage{xcolor}
\usepackage{titling}

\usepackage[backend=biber]{biblatex}

\usepackage[
    colorlinks=true,
    linkcolor=blue,
    citecolor=red,
    urlcolor=blue
]{hyperref}

\numberwithin{equation}{section}

\theoremstyle{plain}
\newtheorem{theorem}{Theorem}[section]
\newtheorem{lemma}[theorem]{Lemma}

\theoremstyle{definition}

\theoremstyle{remark}

\newcommand{\ds}{\,\mathrm{d}s}

\newcommand{\dW}{\,\mathrm{d}W}

\title{
Small-Time Asymptotic Behavior of the Stochastic
Landau--Lifshitz--Baryakhtar Equation
}

\author{
Utpal Manna\thanks{
School of Mathematics, Indian Institute of Science Education and Research Thiruvananthapuram,\\
Vithura, Thiruvananthapuram 695551, India.\\
\textit{E-mail:} manna.utpal@iisertvm.ac.in
}
\and
Sangram Satpathi$^{\sharp}$\thanks{
School of Mathematics, Indian Institute of Science Education and Research Thiruvananthapuram,\\
Vithura, Thiruvananthapuram 695551, India.\\
$^{\sharp}$ Corresponding author.\\
\textit{E-mail:} sangramsatpathi20@iisertvm.ac.in
}
}
\date{}

\begin{document}

\maketitle

\begin{abstract}
We establish a small-time large deviation principle for the stochastic Landau--Lifshitz--Baryakhtar equation using the framework of exponential equivalence. This result characterizes the asymptotic behavior of the solution on very short time scales. In particular, it shows that, as the stochastic thermal fluctuations become small, the magnetization remains exponentially concentrated near its initial state, reflecting the short-time stability of the magnetization dynamics. The associated rate function provides a quantitative measure of deviations from the initial state and the resulting short-time stability.
\end{abstract}
\medskip
\noindent\textbf{MSC:} 35Q56, 35Q60, 60H15, 60F10.

\medskip
\noindent\textbf{Keywords:}
Landau--Lifshitz--Baryakhtar equation; small-time asymptotics;
large deviation principle; stochastic PDEs.
\tableofcontents

\section{Introduction}

\subsection{The Landau--Lifshitz--Baryakhtar equation and magnetization dynamics}
Magnetism of matter is one of the highly active research fields in physics. The mathematical theory of ferromagnetism was initiated by Weiss~\cite{weiss}, who gave the idea of spontaneous magnetization in ferromagnetic materials. In 1935, Landau and Lifshitz~\cite{landaulifshitz} proposed the Landau--Lifshitz equation to describe the dynamics of magnetization in ferromagnetic spin systems. Later, in 1955, Gilbert~\cite{gilbert} further advanced the theory of ferromagnetism and proposed the Landau--Lifshitz--Gilbert (LLG) equation. This equation describes the evolution of the spin magnetic moment in magnetic systems, focusing especially on precession and dissipation under an external magnetic field. More precisely, let the magnetization field be denoted by \(u = u(t,x) \in \mathbb{R}^{3}\), defined on a magnetic domain \(\mathcal{D} \subset \mathbb{R}^{d}\) with \(d = 1,2,3\). Then the Landau--Lifshitz--Gilbert equation is given by
\[
\frac{\partial u}{\partial t}
=
-\lambda_{1} \, u \times H_{\mathrm{eff}}
-\lambda_{2} \, u \times \bigl(u \times H_{\mathrm{eff}}\bigr),
\]
where \(H_{\mathrm{eff}}\) denotes the effective magnetic field, \(\times\) represents the vector cross product in \(\mathbb{R}^{3}\), and the positive constants \(\lambda_{1}\) and \(\lambda_{2}\) correspond, respectively, to the gyromagnetic ratio and the damping parameter. In many mathematical studies, the effective field is taken as \(H_{\mathrm{eff}} = \Delta u\), i.e., the exchange field only.
It is well known that the LLG equation provides a valid description of ferromagnetic dynamics only when the temperature remains below the Curie temperature \(T_{c}\). The model has found numerous applications in the study of magnetic storage devices, nanomagnetic materials, and spintronic systems. From a mathematical viewpoint, the existence, uniqueness, and regularity properties of the LLG equation have already been studied; see, for instance,~\cite{ref1,ref2,ref3,ref4} and the references therein.

However, the LLG equation loses its validity in the high-temperature regime near or above the Curie temperature. To overcome this limitation, Garanin~\cite{garanin} developed a thermodynamically consistent approach and derived the following Landau--Lifshitz--Bloch (LLB) equation, which remains valid both below and above the Curie temperature. The LLB equation can be written as
\[
\frac{\partial u}{\partial t}
=
\gamma\, u \times H_{\mathrm{eff}}
+
\frac{L_{1}}{|u|^{2}}(u\cdot H_{\mathrm{eff}})u
-
\frac{L_{2}}{|u|^{2}}
u\times (u\times H_{\mathrm{eff}}),
\]
where \(\gamma>0\) denotes the gyromagnetic ratio, while \(L_{1}\) and \(L_{2}\) represent the longitudinal and transverse damping parameters, respectively. The effective magnetic field is typically given by
\[
H_{\mathrm{eff}}
=
\Delta u
-
\frac{1}{\chi_{\parallel}}
\left(
1+\frac{3T}{5(T-T_{c})}|u|^{2}
\right)u,
\]
where \(\chi_{\parallel}\) denotes the longitudinal susceptibility.

The LLB equation\cite{ref31} may be considered as an interpolation between the classical LLG dynamics at low temperatures and the Ginzburg--Landau theory of phase transitions at high temperatures. This introduction has significantly enriched our understanding of the behavior of magnetic materials, particularly in the study of magnetic nanoparticles and single-molecule magnets~\cite{ref5}.

Despite their success, both the LLG and LLB equations fail to explain several experimentally observed phenomena, including nonlocal damping effects in magnetic metals and crystals~\cite{ref18,ref43}, as well as the unexpectedly large spin-wave damping for short-wave magnons~\cite{ref4}. To address these limitations, Baryakhtar~\cite{ref2,ref3,ref4} proposed an extension of the classical Landau--Lifshitz type models based on Onsager's relations, leading to the so-called Landau--Lifshitz--Baryakhtar (LLBar) equation.
The general form of LLBar equation ~\cite{ref4,ref42} reads
\[
\frac{\partial u}{\partial t}
=
-\lambda_{1}\, u\times H_{\mathrm{eff}}
+
\Lambda_{r}\cdot H_{\mathrm{eff}}
-
\Lambda_{e,ij}
\frac{\partial^{2} H_{\mathrm{eff}}}{\partial x_{i}\partial x_{j}},
\]
where \(u\) denotes the magnetization vector, while \(\Lambda_{r}\) and \(\Lambda_{e}\) represent the relaxation and exchange tensors, respectively. 
For polycrystalline and amorphous soft magnetic materials, as well as magnetic metals at moderate temperatures where nonlocal damping and longitudinal relaxation are significant, a simplified form of the Landau--Lifshitz--Baryakhtar (LLBar) equation can be written as follows~\cite{ref18,ref42}:
\[
\frac{\partial u}{\partial t}
=
-\lambda_{1}\, u\times H_{\mathrm{eff}}
+
\lambda_{r} H_{\mathrm{eff}}
-
\lambda_{e}\Delta H_{\mathrm{eff}},
\]
where the positive constants \(\lambda_{1}\), \(\lambda_{r}\), and \(\lambda_{e}\) denote the gyromagnetic ratio, the relativistic damping parameter, and the exchange damping coefficient, respectively. The effective magnetic field is usually taken as
\[
H_{\mathrm{eff}}
=
\Delta u
+
\frac{1}{2\chi}(1-|u|^{2})u,
\]
where \(\chi>0\) denotes the magnetic susceptibility of the material.
There are several results available for this LLBar equation; see, for instance,~\cite{ref18,ref42,ref43,ref40}.
To study the stochastic version of the LLBar equation, we incorporate stochastic perturbations into the model in order to obtain a more realistic description of magnetization dynamics in noisy environments. We perturb the effective field by a Gaussian-type noise as follows:
\[
H_{\mathrm{eff}}
\longmapsto
H_{\mathrm{eff}}+\xi,
\]
where \(\xi\) denotes a Gaussian random perturbation. Consequently, the stochastic LLBar equation takes the form
\[
\frac{\partial u}{\partial t}
=
-\lambda_{1}u\times(H_{\mathrm{eff}}+\xi)
+
\lambda_{r}(H_{\mathrm{eff}}+\xi)
-
\lambda_{e}\Delta(H_{\mathrm{eff}}+\xi).
\]

The stochastic LLBar equation has a more complicated structure than the stochastic LLG and stochastic LLB equations due to the presence of higher-order diffusion and nonlinear damping terms. As a result, its mathematical analysis becomes more difficult. We refer to~\cite{fan,xufan, golds} for some recent results on the stochastic LLBar equation.
\subsection{The Stochastic LLBar Equation}
The Gaussian noise considered in this work is given by
\[
\xi(t)
=
\sum_{j=1}^{\infty}
h_{j}\circ \frac{dW_{j}(t)}{dt},
\]
where \(\{W_{j}\}_{j\geq1}\) is a sequence of independent real-valued Wiener processes, and \(\{h_{j}\}_{j\geq1}\) are space-dependent deterministic coefficients, which satisfy the suitable regularity assumptions. The stochastic integral is understood in the Stratonovich sense. Throughout this work, we assume that
\begin{equation}\label{eq:hj_condition}
\sum_{j=1}^{\infty}
\|h_{j}\|_{H^{3}}^{2}
\leq C_{h}<\infty.
\end{equation}

Substituting the effective magnetic field into the stochastic LLBar model yields the following fourth-order stochastic partial differential equation written in Stratonovich form:
\begin{equation}\label{eq:stochastic_LLBar}
\left\{
\begin{aligned}
du &= \Big[ \beta_{1}\Delta u - \beta_{2}\Delta^{2}u + \beta_{3}(1-|u|^{2})u - \beta_{4}u\times \Delta u + \beta_{5}\Delta(|u|^{2}u) \Big]dt \\
&\quad + \sum_{j=1}^{\infty} \bigl( -u\times h_{j} + h_{j} - \Delta h_{j} \bigr) \circ dW_{j}(t), \qquad \text{in } (0,T)\times \mathcal{D}, \\[1mm]
\frac{\partial u}{\partial n} &= \frac{\partial \Delta u}{\partial n} = 0, \qquad \text{on } (0,T)\times \partial\mathcal{D}, \\[1mm]
u(0) &= u_{0}, \qquad \text{in } \mathcal{D}.
\end{aligned}
\right.
\end{equation}

Here, \(\mathcal{D}\subset \mathbb{R}^{d}\), \(d=1,2,3\), is a bounded smooth domain, and \(n\) denotes the outward unit normal vector on \(\partial\mathcal{D}\). The constant
\[
\beta_{1}
=
\lambda_{r}
-
\frac{\lambda_{e}}{2\chi},
\]
may be either positive or negative, whereas \(\beta_{2},\ldots,\beta_{5}\) are positive constants. In general, the constant \(\beta_{1}\) is expected to be positive since \(\lambda_{e}/(2\chi)\) is typically much smaller than \(\lambda_{r}\). However, in certain situations arising in spintronics or magnonics, where the wavelength of magnons approaches the exchange length of the ferromagnetic material, the parameter \(\lambda_{e}\) may become significant~\cite{PhysRevB}. For the simplification of the calculations, we assume that \(\beta_{1}>0\). We also prove all the results for the one-dimensional case \(d=1\). However, it is possible to extend the analysis to the case \(d=2\) and to arbitrary \(\beta_{1}\in\mathbb{R}-\{0\}\) with only minor modifications in the arguments; see~\cite{xufan}.
\subsection{Main Objective and Literature Review}

The main objective of the present work is to study the small-time asymptotic behavior of solutions to the stochastic Landau--Lifshitz--Baryakhtar equation \eqref{eq:stochastic_LLBar}. More precisely, we establish a small-time large deviation principle (LDP) for the corresponding stochastic system under suitable assumptions on the noise and initial data. Large deviation theory concerns the estimation of the probabilities of rare events that decay exponentially. The small-time LDP describes how likely the solution is to remain close to its initial value for very short times.

For modern large deviation theory, the earliest framework was introduced by Varadhan~\cite{V66} in 1966. The study of small-time asymptotics began with Varadhan's work~\cite{V67} in 1967, where he considered finite-dimensional diffusion processes. In that work, the idea was to reduce the small-time LDP problem to a Freidlin--Wentzell type small-noise large deviation problem, and using the concept of exponential equivalence, the authors were able to establish the small-time LDP. Several articles have applied this method; see, for example,~\cite{mann,ROCKNER2012716,group,hilbert,liu,orh,phi,pri,rock,kumar2025globalwellposednesssmalltime}. There are also other works based on this method. For instance, in~\cite{navier}, the authors studied the small-time asymptotics of the two-dimensional Navier--Stokes equation, and in~\cite{llb}, the small-time behavior of the stochastic Landau--Lifshitz--Bloch equation was investigated using the same approach. In this paper, we also follow this method to establish the small-time LDP for our equation. For more techniques related to the large deviation principle, we refer to the book~\cite{book}.

Alternatively, small-time asymptotics can also be studied using the weak convergence method. However, there is limited literature on small-time large deviations for stochastic partial differential equations using this approach. One such work is by Juan Yang et al.~\cite{delay}, where they studied small-time large deviations for a stochastic delay equation. On the other hand, for the small-noise large deviation principle, this method has been widely established for stochastic partial differential equations; see, for example,~\cite{SRITHARAN,akash,Qiu2020, chengg} and the references therein.

Since we aim to study the small-time large deviation principle for the stochastic LLBar equation, let us elaborate a bit more on its physical interpretation. The principle describes the fact that the solution is concentrated around the initial position with exponential probability after a short time. That is, the magnetomechanical properties of ferromagnetic materials are stabilized in the initial state as the stochastic thermal effect gradually diminishes. Moreover, the rate function of the large deviation portrays this relative stability. An important motivation to consider such a problem comes from the Varadhan identity
\[
\lim_{t \to 0} 2t \log \mathbb{P}\bigl(u(0) \in B, u(t) \in C\bigr) = -d^2(B, C),
\]
where \(u\) is the strong solution of the stochastic LLBar equation and \(d\) is an appropriate Riemannian distance associated with the diffusion generated by \(u\). It is worth mentioning that the small-time large deviation principle for finite-dimensional diffusion processes was initiated by Varadhan~\cite{V67}, and the small-time large deviation principle for infinite-dimensional models was studied in~\cite{Aida200267, Hino20031254, navier, dong, ROCKNER2012716, manil} and the references therein.

The layout of the present paper is as follows. In Section \ref{sec:2}, we state some prelimineries and the main result. Section \ref{sec:3} is devoted to establishing the small-time large deviation principle for Eq.~\eqref{eq:stochastic_LLBar}.

\section{Preliminaries and Main Results}{\label{sec:2}}
\subsection{Functional Setting and Auxiliary Facts}
We assume that \(\mathcal{D}\subset \mathbb{R}\) is a bounded open domain. For \(1\leq p\leq \infty\), let \(L^{p}(\mathcal{D}) = L^{p}(\mathcal{D};\mathbb{R}^{3})\) denote the vector-valued Lebesgue space with norm \(\|\cdot\|_{L^{p}}\). The inner product in \(L^{2}(\mathcal{D})\) is \(\langle \cdot,\cdot\rangle\). For \(m\geq 0\), \(H^{m}=H^{m}(\mathcal{D};\mathbb{R}^{3})\) is the usual Sobolev space with norm \(\|u\|_{H^{m}}^{2} = \|(I-\Delta)^{m/2}u\|_{L^{2}}^{2}\) and inner product \(\langle \cdot,\cdot\rangle_{H^{m}}\). Let \(H^{-m}\) be the dual space of \(H^{m}\), with duality pairing \({}_{H^{-m}}\langle \cdot,\cdot\rangle_{H^{m}}\). For simplicity, write \(H := L^{2}(\mathcal{D})\).

Since the spatial domain is one-dimensional, the Sobolev embedding theorem yields \(H^{1}(\mathcal{D}) \hookrightarrow L^{\infty}(\mathcal{D}) \hookrightarrow L^{p}(\mathcal{D})\) for \(1 \leq p < \infty\).

In our analysis, we will frequently use the following identities.
For any vector-valued function \(v : D \to \mathbb{R}^3\), we have
\begin{align}
\nabla \bigl(|v|^2 v\bigr) &= 2v (v \cdot \nabla v) + |v|^2 \nabla v, \label{eq:aux1} \\
\frac{\partial}{\partial n} \bigl(|v|^2 v\bigr) &= 2v \left( v \cdot \frac{\partial v}{\partial n} \right) + |v|^2 \frac{\partial v}{\partial n}, \label{eq:aux2} \\
\Delta \bigl(|v|^2 v\bigr)
&=2|\nabla v|^2 v+2(v \cdot \Delta v)\,v+4\,\nabla v\,(v \cdot \nabla v)^{\top}+|v|^2 \Delta v.
\label{eq:aux3}
\end{align}

Similar to~\cite{xufan}, we can establish the following theorem for equation~\eqref{eq:stochastic_LLBar}.
\begin{theorem}\label{thm:main_result}
Assume that the initial data \(u_{0}\in H^{1}\). Fix a stochastic basis \((\Omega, \mathcal{F}, \mathbb{F}, \mathbb{P}, W)\). Then, for every \(T>0\), the stochastic LLBar equation \eqref{eq:stochastic_LLBar} admits a unique solution
\[
u \in L^{p}\bigl( \Omega; L^{\infty}(0,T;H^{1}) \bigr) \cap L^{2}\bigl( \Omega; L^{2}(0,T;H^{3}) \bigr),
\]
for every \(p\geq 1\). Moreover, for every \(t\in[0,T]\) and every \(\phi\in H^{1}\), the following weak formulation holds \(\mathbb{P}\)-a.s.:
\begin{equation}\label{eq:weak_formulation_main}
\begin{aligned}
\langle u(t), \phi \rangle_H &= \langle u_0, \phi \rangle_H
+ \beta_1 \int_0^t \langle \Delta u(s), \phi \rangle_H \, ds
- \beta_2 \int_0^t \langle \nabla\Delta u(s), \nabla \phi \rangle_H \, ds \\
&\quad + \beta_3 \int_0^t \langle (1-|u(s)|^2) u(s), \phi \rangle_H \, ds
+ \beta_4 \int_0^t \langle u(s) \times \Delta u(s), \phi \rangle_H \, ds \\
&\quad + \beta_5 \int_0^t \langle \Delta(|u(s)|^2 u(s)),  \phi \rangle_H \, ds \\
&\quad + \sum_{j=1}^{\infty} \int_0^t \langle -u(s) \times h_j + h_j - \Delta h_j, \phi \rangle_H \circ dW_j(s).
\end{aligned}
\end{equation}
\end{theorem}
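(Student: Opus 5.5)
We will follow the strategy of~\cite{xufan}: a Faedo--Galerkin approximation, uniform energy estimates, tightness and stochastic compactness, and finally pathwise uniqueness combined with the Gy\"ongy--Krylov (or Yamada--Watanabe) argument to recover a strong solution on the given stochastic basis.

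\textbf{Step 1 (Galerkin scheme and It\^o form).} First we convert the Stratonovich noise into It\^o form. The correction term is $\frac12\sum_j G_j'(u)G_j(u)$ with $G_j(u)=-u\times h_j+h_j-\Delta h_j$. Since $G_j$ is affine in $u$, this correction equals $\frac12\sum_j (u\times h_j - h_j+\Delta h_j)\times h_j$. It is linear in $u$, and by \eqref{eq:hj_condition} it is bounded in $H^1$ by $C(1+\|u\|_{H^1})$. We then let $\{e_k\}$ be the eigenfunctions of the Neumann Laplacian, with $P_n$ the associated projection, and solve the projected SDE for $u_n$ on $\mathrm{span}\{e_1,\dots,e_n\}$. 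The coefficients are locally Lipschitz, so local solutions exist, and global existence will follow from the estimates of Step 2.

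\textbf{Step 2 (uniform estimates).} We apply It\^o's formula to $\|u_n\|_H^2$ and to $\|\nabla u_n\|_H^2$, raise to the power $p$, and use BDG. The structural cancellations are the key. We have $\langle u\times\Delta u,u\rangle=0$ and $\langle u\times \Delta u,\Delta u\rangle=0$, and $\langle \Delta(|u|^2u),u\rangle=-\int(|u|^2|\nabla u|^2+2|u\cdot\nabla u|^2)\le 0$ by \eqref{eq:aux1}. The cubic term $\beta_3(1-|u|^2)u$ produces the good term $-\beta_3\|u\|_{L^4}^4$. For the $H^1$ level, testing against $-\Delta u$ produces $-\beta_2\|\nabla\Delta u\|^2$. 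The term $\beta_5\langle\Delta(|u|^2u),-\Delta u\rangle=\beta_5\langle\nabla(|u|^2u),\nabla\Delta u\rangle$ is handled with \eqref{eq:aux1} and the 1D Gagliardo--Nirenberg inequality $\|u\|_{L^\infty}\le C\|u\|^{1/2}\|u\|_{H^1}^{1/2}$, giving a bound $\frac{\beta_2}{4}\|\nabla\Delta u\|^2+C\|u\|_{L^\infty}^4\|\nabla u\|^2$. Similarly $\langle u\times\Delta u,\Delta u\rangle=0$ kills the gyromagnetic term at this level. The noise terms are controlled by $C_h(1+\|u\|_{H^1}^2)$. Gronwall then yields
\[
\mathbb{E}\sup_{t\le T}\|u_n\|_{H^1}^{2p}+\mathbb{E}\Big(\int_0^T\|u_n\|_{H^3}^2\ds\Big)^{p}\le C(p,T,\|u_0\|_{H^1}),
\]
uniformly in $n$. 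The factor $\|u\|_{L^\infty}^4$ is the delicate point. Since $\|u\|_{L^\infty}^4\le C\|u\|_H^2\|u\|_{H^1}^2$ and $\|u\|_H$ is already controlled, the difficulty is handled either by working with a stopping time or by estimating the $L^2$ and $H^1$ levels jointly. \textbf{This nonlinear $H^1$ estimate, with its superlinear growth, is the main obstacle.} We will first try to bound it using the dissipation $\int|u|^2|\nabla u|^2$ together with interpolation. If that fails, we will localize with stopping times $\tau_R$ and remove the localization by a stochastic Gronwall lemma.

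\textbf{Step 3 (compactness, passage to the limit, uniqueness).} Next we obtain fractional time regularity $W^{\alpha,2}(0,T;H^{-1})$ from the equation. Aubin--Lions then gives tightness of the laws of $u_n$ in $L^2(0,T;H^2)\cap C([0,T];L^2)$. Skorokhod--Jakubowski provides a.s.\ convergence on a new probability space. Strong $L^2(H^2)$ convergence suffices to pass to the limit in $u\times\Delta u$ and in $\Delta(|u|^2u)$ tested against $\phi\in H^1$, which yields a martingale solution satisfying \eqref{eq:weak_formulation_main}.

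For pathwise uniqueness, let $w=u-v$ for two solutions and apply It\^o's formula to $\|w\|_H^2$. The nonlinear differences are bounded by $\frac{\beta_2}{2}\|\Delta w\|^2+C(1+\|u\|_{H^2}^2+\|v\|_{H^2}^2)(1+\|u\|_{H^1}^4+\|v\|_{H^1}^4)\|w\|^2$. Here $\langle u\times\Delta u-v\times\Delta v,w\rangle=\langle w\times\Delta u,w\rangle+\langle v\times\Delta w,w\rangle$, and the first piece vanishes. The noise part is linear in $w$. Gronwall applied with the integrable random weight, via stopping times, gives $w=0$. Gy\"ongy--Krylov then yields the unique strong solution on the original basis with the stated integrability.
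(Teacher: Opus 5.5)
The paper gives no proof of Theorem~\ref{thm:main_result}; it defers to \cite{xufan}. Your Galerkin--tightness--pathwise uniqueness--Gy\"ongy--Krylov outline is the standard route of that reference. The It\^o correction, the cancellations you list, the compactness step and the uniqueness argument are sound. In the uniqueness computation the noise in fact drops out entirely: $\langle (w\times h_j)\times h_j,w\rangle_H=-\|w\times h_j\|_H^2$ cancels the quadratic variation and $\langle w\times h_j,w\rangle_H=0$, so a pathwise Gronwall suffices.

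\textbf{The gap.} It is the step you flag yourself: the $H^1$ a priori estimate is never closed, and neither of your two suggestions closes it.
\begin{itemize}
\item The dissipation $\int|u|^2|\nabla u|^2$ is only quartic, so it cannot control the sextic term $\|u\|_{L^\infty}^4\|\nabla u\|_H^2$.
\item The stochastic Gronwall fallback gives moment bounds only if $\int_0^T\|u\|_{L^\infty}^4\,ds$ has exponential moments. That integral is of size $\sup_t\|u\|_H^2\int_0^T\|u\|_{H^1}^2\,ds$, and the $L^2$ estimate gives no such control of this product. Localization would therefore give bounds in probability, which is enough for tightness. It would not give the uniform bound $\mathbb{E}\sup_t\|u_n\|_{H^1}^{2p}\le C$ you display, hence not $u\in L^p(\Omega;L^\infty(0,T;H^1))$ for every $p$.
\end{itemize}

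\textbf{The fix.} Absorb the superlinear term into the fourth-order dissipation, paying only with a power of the $L^2$ norm. In one dimension $\|u\|_{L^\infty}^2\le C\|u\|_H\|u\|_{H^1}$ and $\|u\|_{H^1}\le C\|u\|_H^{2/3}\|u\|_{H^3}^{1/3}$, so
\[
\|u\|_{L^\infty}^4\|\nabla u\|_H^2\le C\|u\|_H^2\|u\|_{H^1}^4\le C\|u\|_H^{14/3}\|u\|_{H^3}^{4/3}\le \delta\|u\|_{H^3}^2+C_\delta\|u\|_H^{14}.
\]
By \eqref{eq:elliptic}, for small $\delta$ the first term is absorbed into $\beta_2\|\nabla\Delta u\|_H^2$, up to a term linear in $\|u\|_{H^1}^2$. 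Your $L^2$-level step already gives $\mathbb{E}\sup_{t\le T}\|u_n(t)\|_H^{q}\le C_q$ for every $q$, uniformly in $n$. The $H^1$ inequality is then linear in $\|\nabla u_n\|_H^2$ with a forcing that has all moments, and BDG plus the ordinary Gronwall lemma yield your displayed estimate.

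This is exactly how the paper handles the same term in the proof of Lemma~\ref{lemma1}, see \eqref{eq:GN}--\eqref{eq:GN-final}. The exponents in \eqref{eq:GN} are misprinted: the correct bound is $\|\nabla u\|_{L^4}^4\le C\|u\|_H^{7/3}\|u\|_{H^3}^{5/3}$, which does give $\delta\|u\|_{H^3}^2+C_\delta\|u\|_H^{14}$. With this inserted, the rest of your outline goes through.

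\textbf{Two minor points.}
\begin{itemize}
\item Tightness in $C([0,T];L^2)$ needs a $W^{\alpha,p}(0,T;H^{-1})$ bound with $\alpha p>1$, which your higher moments supply. A $W^{\alpha,2}$ bound with $\alpha<\frac12$ is not enough.
\item The limit satisfies the weak form of \eqref{eq:stochastic_LLBar}. Its $\beta_2$- and $\beta_4$-terms carry the opposite signs to those printed in \eqref{eq:weak_formulation_main}.
\end{itemize}
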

\subsection{Main result}

Let $\varepsilon>0$. Then it is easy to see that the small-time process ${u}(\varepsilon t)$ coincides in law with the solution of the following equation:
\begin{equation}\label{eq:ito_LLBar}
\begin{aligned}
{u}^\varepsilon(t)
={}&\, {u}_0
+\varepsilon\beta_1\int_0^t \Delta{u}^\varepsilon(s)\ds
-\varepsilon\beta_2\int_0^t \Delta^2{u}^\varepsilon(s)\ds 
+\varepsilon\beta_3\int_0^t
\bigl(1-|{u}^\varepsilon(s)|^2\bigr){u}^\varepsilon(s)\ds \\
&-\varepsilon\beta_4\int_0^t
{u}^\varepsilon(s)\times\Delta{u}^\varepsilon(s)\ds 
+\varepsilon\beta_5\int_0^t
\Delta\Bigl(|{u}^\varepsilon(s)|^2{u}^\varepsilon(s)\Bigr)\ds\\
&-\frac{\varepsilon}{2}\sum_{j=1}^\infty
\int_0^t
{{G}_j}(u^\varepsilon)\times {h}_j\ds
+\sqrt{\varepsilon}\sum_{j=1}^\infty
\int_0^t
{{G}_j}(u^\varepsilon)\,\dW_j(s),
\end{aligned}
\end{equation}
where ${{G}_j}(u^\varepsilon)$ is defined as:
\begin{equation}
{{G}_j}(u^\varepsilon) = -{u}^\varepsilon\times {h}_j +  {h}_j - \Delta {h}_j.
\end{equation}

Let $\nu^\varepsilon$ be the law of $u^\varepsilon$ on $C([0,T];H^{1})$.  
Define a functional $I(g)$ on $C([0,T];H^{1})$ by
\begin{equation}{\label{rate}}
I(g)
=
\inf_{l\in \Gamma_g}
\left\{
\frac12
\int_0^T
|l(t)|_{H}^{\,2}\,dt
\right\}.
\end{equation}
where
\[
\Gamma_g
=\left\{
l\in C([0,T];H^{1}) :
\ l \text{ is absolutely continuous and }
g(t)
=x+\sum_{j=1}^{\infty}\int_0^t G_j\bigl(g^\varepsilon(s)\bigr) l(s)\,ds,
\quad 0\le t\le T
\right\}.
\]
Now we state the main result of this paper.

\begin{theorem}{\label{main_theorem}}
The family $\{\nu^\varepsilon\}_{\varepsilon>0}$
satisfies the large deviation principle on
$C([0,T];H^{1})$ with the rate function $I(\cdot)$
given by \eqref{rate}, i.e.,

\begin{enumerate}
\item[(i)] For any closed subset
$F\subset C([0,T];H^{1})$,
\[
\limsup_{\varepsilon\to0}
\varepsilon \log \nu^\varepsilon(F)
\le
-\inf_{g\in F} I(g).
\]

\item[(ii)] For any open subset
$E\subset C([0,T];H^{1})$,
\[
\liminf_{\varepsilon\to0}
\varepsilon \log \nu^\varepsilon(E)
\ge
-\inf_{g\in E} I(g).
\]
\end{enumerate}
\end{theorem}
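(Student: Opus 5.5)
The plan follows Varadhan's exponential-equivalence scheme: reduce the small-time problem to a Freidlin--Wentzell small-noise problem. Let $v^\varepsilon$ be the solution of the ``frozen'' equation obtained from \eqref{eq:ito_LLBar} by dropping every drift term carrying the factor $\varepsilon$ and keeping only the noise:
\[
v^\varepsilon(t)=u_0+\sqrt{\varepsilon}\sum_{j=1}^\infty\int_0^t G_j(v^\varepsilon(s))\,\dW_j(s).
\]
Since $G_j(v)=-v\times h_j+h_j-\Delta h_j$ is affine in $v$, we have $\|G_j(v)-G_j(w)\|_{H^1}\le C\|h_j\|_{H^3}\|v-w\|_{H^1}$. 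Hence, by \eqref{eq:hj_condition}, the diffusion coefficient is globally Lipschitz with linear growth from $H^1$ into the Hilbert--Schmidt operators $\ell^2\to H^1$. The classical small-noise LDP for SDEs in Hilbert space with Lipschitz coefficients then gives an LDP for the laws $\mu^\varepsilon$ of $v^\varepsilon$ on $C([0,T];H^1)$. One route is the weak-convergence method of Budhiraja--Dupuis: check that the skeleton equation $g=u_0+\sum_j\int_0^t G_j(g)l_j\,ds$ is well posed and continuous in $l$ on bounded sets of $L^2(0,T;\ell^2)$ with the weak topology, which is elementary for this linear equation. The resulting rate function is exactly $I$ in \eqref{rate}.

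Next I show that $u^\varepsilon$ and $v^\varepsilon$ are exponentially equivalent, i.e. for every $\delta>0$,
\[
\lim_{\varepsilon\to0}\varepsilon\log\mathbb{P}\Bigl(\sup_{t\le T}\|u^\varepsilon(t)-v^\varepsilon(t)\|_{H^1}>\delta\Bigr)=-\infty.
\]
The LDP then transfers from $\mu^\varepsilon$ to $\nu^\varepsilon$ with the same rate function. This step is the real work. Because $u^\varepsilon$ has no global Lipschitz structure in $H^1$, I will localize. Set $\tau^\varepsilon_M=\inf\{t:\|u^\varepsilon(t)\|_{H^1}>M\}$ and bound the probability by $\mathbb{P}(\tau^\varepsilon_M<T)$ plus the probability of the event restricted to $[0,\tau^\varepsilon_M]$.

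For $\mathbb{P}(\tau_M^\varepsilon<T)$, I apply Itô's formula to $\|u^\varepsilon\|_{H^1}^2$. The $\varepsilon$-drift terms are controlled using the dissipation $-\varepsilon\beta_2\|u^\varepsilon\|_{H^3}^2$ together with the identities \eqref{eq:aux1}--\eqref{eq:aux3} and the one-dimensional embedding $H^1\hookrightarrow L^\infty$. I then use a $p$-th moment estimate with $p\sim1/\varepsilon$ (BDG with constant $\sim\sqrt p$) in the standard way to get
\[
\limsup_{M\to\infty}\limsup_{\varepsilon\to0}\varepsilon\log\mathbb{P}\Bigl(\sup_{t\le T}\|u^\varepsilon(t)\|_{H^1}>M\Bigr)=-\infty.
\]
Note that the noise contributes only terms of size $\sqrt{\varepsilon}$ times quantities linear in $u^\varepsilon$, and the Itô correction is $O(\varepsilon)$.

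For the difference $w^\varepsilon=u^\varepsilon-v^\varepsilon$, I apply Itô's formula to $\|w^\varepsilon\|_{H^1}^2$ up to $\tau_M^\varepsilon$. The stochastic part is $\sqrt\varepsilon\sum_j\int G_j(u^\varepsilon)-G_j(v^\varepsilon)\,dW_j$, which is Lipschitz in $w^\varepsilon$. The drift is $\varepsilon$ times the full LLBar nonlinearity evaluated at $u^\varepsilon$.

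The main obstacle is that this drift involves $\Delta^2u^\varepsilon$ and $\Delta(|u^\varepsilon|^2u^\varepsilon)$, which cannot be bounded in $H^1$ uniformly. I expect to handle this by pairing with $-\Delta w^\varepsilon$ and integrating by parts, which gives
\[
\varepsilon\langle\nabla\Delta u^\varepsilon,\nabla\Delta w^\varepsilon\rangle\le -\tfrac{\varepsilon\beta_2}{2}\|\nabla\Delta w^\varepsilon\|^2+C\varepsilon\|\nabla\Delta v^\varepsilon\|^2.
\]
This requires $v^\varepsilon$ to stay exponentially bounded in $L^2(0,T;H^3)$. I will obtain that by taking $u_0$ smooth enough, or by running the frozen equation from $u_0$ in $H^3$ using $h_j\in H^3$, and then approximating. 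The remaining terms are bounded by $\varepsilon C(M)(1+\|w^\varepsilon\|_{H^1}^2)$ plus absorbable pieces.

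Gronwall's inequality combined with the same $p\sim1/\varepsilon$ moment argument then yields
\[
\varepsilon\log\mathbb{P}\Bigl(\sup_{t\le T\wedge\tau_M^\varepsilon}\|w^\varepsilon\|_{H^1}>\delta\Bigr)\to-\infty
\]
for each fixed $M$. Letting $M\to\infty$ completes the exponential equivalence, and Theorem~\ref{main_theorem} follows.
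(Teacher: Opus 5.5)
Your architecture matches the paper's. You use the frozen equation \eqref{eq:Yeps} with its LDP quoted from the Lipschitz small-noise theory. You get exponential equivalence in $C([0,T];H^1)$ from Lemma~\ref{lemma1}-type tail bounds with $p\sim 1/\varepsilon$, plus stopping-time localisation and Gronwall for the difference. You then smooth the initial datum and approximate, as in the paper's Lemmas~\ref{lemma2}, \ref{lemma3} and \ref{lemma6}.

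The step that fails is the one you rightly single out as essential: an exponential bound for $v^\varepsilon$ in $L^2(0,T;H^3)$, to be obtained by starting the frozen equation from $u_0\in H^3$ and using $h_j\in H^3$. The frozen equation has no drift, hence no smoothing, and its noise contains the additive part $\sqrt{\varepsilon}\sum_j(h_j-\Delta h_j)W_j(t)$. Under \eqref{eq:hj_condition} you only know $\sum_j\|\Delta h_j\|_{H^1}^2<\infty$. So $\sum_j\Delta h_jW_j(t)$ is an $H^1$-valued Wiener process and, for $h_j\in H^3\setminus H^4$, not an $H^2$-valued one. Every other term of $v^\varepsilon(t)$ lies in $H^2$ whenever $v^\varepsilon$ does: the datum, $\sqrt{\varepsilon}\sum_j h_jW_j(t)$, and $\sqrt{\varepsilon}\sum_j\int_0^t v^\varepsilon\times h_j\,dW_j$. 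Therefore $v^\varepsilon$ is not $H^2$-valued, however smooth $u_0$ is.

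The consequences are concrete. First, $\int_0^T\|\nabla\Delta v^\varepsilon\|_H^2\,ds=\infty$ and $\nabla\Delta w^\varepsilon\notin H$. So the $H^1$ energy identity for $w^\varepsilon=u^\varepsilon-v^\varepsilon$ with the $\beta_2$-dissipation, on which your Gronwall argument rests, is not available. Second, the $\beta_1$ and $\beta_4$ cross terms need $\|\Delta v^\varepsilon\|_H$ and fail for the same reason.

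To make your route work you must strengthen the hypothesis, e.g.\ to $\sum_j\|h_j\|_{H^5}^2<\infty$. Then $G$ is Lipschitz from $H^3$ into $L_2(\ell^2,H^3)$, and the $p\sim1/\varepsilon$ argument bounds $\sup_t\|v^\varepsilon(t)\|_{H^3}$ exponentially for data $x_n\in H^3$. You also need compatibility with the Neumann condition, e.g.\ $\partial_n x_n=\partial_n h_j=\partial_n\Delta h_j=0$ on $\partial\mathcal{D}$, so that $\partial_n v^\varepsilon=0$ and your integrations by parts leave no boundary terms. Otherwise, you need a comparison argument that never uses $v^\varepsilon$ above $H^1$.

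The paper does not close this gap either. It takes $x_n\in H^2$ and quotes \cite[Lemma 5.2]{ROCKNER2012716} for an exponential bound on $\sup_t\|v_n^\varepsilon(t)\|_{H^2}$, which already requires $h_j\in H^4$. In the term $K_2$ of Lemma~\ref{lemma5} it bounds $\langle\nabla d_n,\nabla\Delta^2v_n^\varepsilon\rangle_H$ by $\|\nabla v_n^\varepsilon\|_H^2$, whereas Young's inequality yields $\|\nabla\Delta v_n^\varepsilon\|_H^2$ --- exactly the quantity you identified. So your accounting of what is needed is more accurate than the paper's. But the claim that $h_j\in H^3$ supplies it is false, and under \eqref{eq:hj_condition} your argument does not close.
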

For simplicity of notation, let \(C\) denote a generic positive constant, which may vary from line to line throughout the rest of the paper. Moreover, we use \(x\) to denote the initial value of both \(u^{\varepsilon}\) and \(v^{\varepsilon}\).
\section{Proof of main result}{\label{sec:3}}
In this section, we establish the proof of Theorem~\ref{main_theorem}, which is mainly based on the method of exponential equivalence.

Let $v^\varepsilon(\cdot)$ be the solution of the following stochastic differential equation:
\begin{equation}\label{eq:Yeps}
v^\varepsilon(t)
=
x+\sqrt{\varepsilon}\sum_{j=1}^\infty
\int_0^t
{{G}_j}(v^\varepsilon)\,\dW_j(s),
\end{equation}
and let $\nu^\varepsilon$ be the law of $v^\varepsilon(\cdot)$ on
$C([0,T];H^{1})$. By \cite[Theorem 12.9]{daprato}, the family
$\{\nu^\varepsilon\}_{\varepsilon>0}$ satisfies a large deviation principle on
$C([0,T];H^{1})$ with rate function $I$ defined in \eqref{rate}; see also
\cite{budhi,liuLDP,tao}. Therefore, to prove
Theorem~\ref{main_theorem}, it is enough to show that the measures
$\mu^\varepsilon$ and $\nu^\varepsilon$ are exponentially equivalent.
Indeed, by \cite[Theorem 4.2.13]{dembo}, it suffices to establish that for every
$\delta>0$,
\begin{equation}\label{exp_equiv}
\lim_{\varepsilon\to0}
\varepsilon
\log
\mathbb{P}
\left(
\sup_{0\le t\le T}
\|u^\varepsilon(t)-v^\varepsilon(t)\|_{H^{1}}^{2}
>\delta
\right)
=-\infty.
\end{equation}
Then Theorem~\ref{main_theorem} follows from the fact that if one of two exponentially equivalent families satisfies the large deviation principle, then so does the other.

For this purpose, we first establish several lemmas.
Let $u^\varepsilon(t)$ be the solution of \eqref{eq:ito_LLBar}. Then the following result holds.
\begin{lemma}\label{lemma1}
Let \(u^\varepsilon(t)\) be the solution of \eqref{eq:ito_LLBar}. Then
\[
\lim_{M\to\infty}
\sup_{0<\varepsilon\le1}
\varepsilon
\log
\mathbb{P}
\left(
\|u^\varepsilon\|_{H^1}^{H^2}(T) > M
\right)
=
-\infty,
\]
where
\[
\|u^\varepsilon\|_{H^1}^{H^2}(T)
:=
\sup_{0\le t\le T}
\|u^\varepsilon(t)\|_{H^1}^{2}
+
2C\varepsilon
\int_0^T
\|u^\varepsilon(s)\|_{H^2}^{2}\, ds,
\]
with \(C>0\).
\end{lemma}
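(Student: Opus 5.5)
We apply Itô's formula to $\|u^\varepsilon(t)\|_{H^1}^2$, or equivalently to $\|u^\varepsilon\|_H^2+\|\nabla u^\varepsilon\|_H^2$. The aim is a pathwise energy inequality whose stochastic part is a martingale multiplied by $\sqrt{\varepsilon}$. We then use exponential moment bounds of order $p\sim 1/\varepsilon$. Concretely, for $Y(t):=\|u^\varepsilon(t)\|_{H^1}^2$ we take the $H$- and $\nabla$-pairings of \eqref{eq:ito_LLBar} with $u^\varepsilon$ and $-\Delta u^\varepsilon$.

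\textbf{Deterministic terms.} The biharmonic term gives the dissipation $-2\varepsilon\beta_2\|\nabla\Delta u^\varepsilon\|_H^2-2\varepsilon\beta_2\|\Delta u^\varepsilon\|_H^2$. The term $\beta_1\Delta u^\varepsilon$ gives a further nonpositive contribution. The gyroscopic term vanishes in the $H$-pairing, since $\langle u\times\Delta u,u\rangle=0$. In the $\nabla$-pairing it becomes $\langle \nabla u\times\Delta u,\nabla u\rangle$-type terms, which we bound by $\|\nabla u\|_{L^\infty}\|\nabla u\|_H\|\Delta u\|_H$.

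The quartic terms $\beta_3|u|^2u$ and $\beta_5\Delta(|u|^2u)$ are handled with \eqref{eq:aux1}–\eqref{eq:aux3}. In the $H$-pairing the $\beta_5$ term gives $-\beta_5\int(|u|^2|\nabla u|^2+2(u\cdot\nabla u)^2)\le0$. In the $H^1$-pairing we integrate by parts once to get $\langle\nabla(|u|^2u),\nabla\Delta u\rangle$ and bound it by $\|u\|_{L^\infty}^2\|\nabla u\|_H\|\nabla\Delta u\|_H$. Using the one-dimensional Gagliardo–Nirenberg inequalities $\|v\|_{L^\infty}\le C\|v\|_H^{1/2}\|v\|_{H^1}^{1/2}$ and interpolation between $H^1$ and $H^3$, Young's inequality absorbs all top-order norms into half of the dissipation. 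This leaves
\[
\mathrm{d}Y+ C\varepsilon\|u^\varepsilon\|_{H^2}^2\mathrm{d}t\le \varepsilon C(1+Y)^{k}\,\mathrm{d}t+2\sqrt{\varepsilon}\sum_j\langle G_j(u^\varepsilon),u^\varepsilon\rangle_{H^1}\mathrm{d}W_j
\]
for some power $k\ge1$. The Itô correction and the Stratonovich drift are of the form $\varepsilon\sum_j\|G_j(u^\varepsilon)\|_{H^1}^2+\varepsilon\langle G_j\times h_j,u\rangle_{H^1}$, which is bounded by $\varepsilon C_h(1+Y)$ using \eqref{eq:hj_condition} and $H^1$ being an algebra.

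\textbf{Main obstacle.} If $k>1$ the Gronwall step does not close. I expect the main difficulty to be showing that the quartic term yields only linear growth, $k=1$. This should follow from the favorable sign of the $\beta_3$ and $\beta_5$ contributions together with the $L^\infty$-$L^2$ interpolation. If it fails, I would localize with the stopping time $\tau_M=\inf\{t:Y(t)>M\}$ and track the dependence on $M$.

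\textbf{Exponential estimate.} Assuming linear growth, apply Itô's formula to $(1+Y)^p$ with $p\ge2$. The quadratic variation of the martingale is bounded by $\varepsilon C(1+Y)^2$, so the Itô term is of order $\varepsilon p^2(1+Y)^p$. The Burkholder–Davis–Gundy inequality with constant $C\sqrt p$ and Gronwall's inequality then give
\[
\Big(\mathbb{E}\big[\|u^\varepsilon\|_{H^1}^{H^2}(T)^p\big]\Big)^{1/p}\le C(1+\|x\|_{H^1}^2)\,e^{C(1+\varepsilon p)T}.
\]
Choosing $p=1/\varepsilon$ makes the right-hand side uniform in $\varepsilon\in(0,1]$, so Chebyshev's inequality yields
\[
\varepsilon\log\mathbb{P}\big(\|u^\varepsilon\|_{H^1}^{H^2}(T)>M\big)\le \log\frac{C(1+\|x\|_{H^1}^2)e^{2CT}}{M}.
\]
The right-hand side tends to $-\infty$ as $M\to\infty$, uniformly in $\varepsilon$, which proves the lemma.
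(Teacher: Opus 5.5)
There is a genuine gap, and it sits exactly at what you call the main obstacle. Linear growth ($k=1$) for the single functional $Y=\|u^\varepsilon\|_{H^1}^2$ is false, and the signs of the $\beta_3,\beta_5$ terms do not rescue it.

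In the gradient pairing the $\beta_5$ term is $-\varepsilon\beta_5\langle\Delta(|u|^2u),\Delta u\rangle_H$. By \eqref{eq:aux3} it contains the sign-indefinite pieces $-2\langle|\nabla u|^2u,\Delta u\rangle_H-4\langle\nabla u\,(u\cdot\nabla u)^\top,\Delta u\rangle_H$. For $u=\phi e$ with a fixed unit vector $e$, the whole contribution is $\varepsilon\beta_5\bigl(2\|\phi'\|_{L^4}^4-3\|\phi\phi''\|_H^2\bigr)$. This can be positive: the ratio $\|\phi'\|_{L^4}^4/\|\phi\phi''\|_H^2$ gets close to $9$ for Neumann-corrected profiles near $(x+\eta)^{3/4}$ with $\eta$ small.

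This quantity is homogeneous of degree four under $u\mapsto Au$ and carries more derivatives than the good quartic $\beta_3$ terms. The dissipation and $Y$, by contrast, are only quadratic in $A$. Hence no inequality of the form ``bad term $\le\tfrac12$ dissipation $+\,C(1+Y)$'' can hold. Any interpolation leaves a superlinear remainder, such as $C\|\nabla u\|_H^{14/3}$, or $C\|u\|_{L^\infty}^4\|\nabla u\|_H^2$ from your own bound on $\langle\nabla(|u|^2u),\nabla\Delta u\rangle_H$.

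Your treatment of the gyroscopic term creates another remainder, of order $\|\nabla u\|_H^6$, and it is not needed. That term vanishes identically, since $\langle\nabla(u\times\Delta u),\nabla u\rangle_H=-\langle u\times\Delta u,\Delta u\rangle_H=0$ under the Neumann condition.

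Your fallback does not repair this. Localizing at $\tau_M$ puts a factor $\exp(C\varepsilon M^{k-1}T)$ into the $L^p$ bound. After $p=1/\varepsilon$ you only get $\varepsilon\log\mathbb P\le-\log M+C\varepsilon M^{k-1}T+C$, whose supremum over $\varepsilon\in(0,1]$ does not tend to $-\infty$.

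The missing idea is the paper's two-level structure.

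First, run your argument on $\|u^\varepsilon\|_H^2$ alone. There the gyroscopic term vanishes, the $\beta_3$ and $\beta_5$ terms have good signs, and growth is genuinely linear. Taking $p=1/\varepsilon$ yields the uniform exponential tail for $\sup_t\|u^\varepsilon\|_H^2+\varepsilon\int_0^T(\|\nabla u^\varepsilon\|_H^2+\|\Delta u^\varepsilon\|_H^2)\,ds$. It also gives uniform bounds on $\bigl(\mathbb E\sup_t\|u^\varepsilon\|_H^{2q}\bigr)^{1/q}$ for $q$ of order $1/\varepsilon$.

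Only then treat $\|\nabla u^\varepsilon\|_H^2$. Bound the quartic remainder by $C\|\nabla u\|_{L^4}^4\le\kappa\|u\|_{H^3}^2+C_\kappa\|u\|_H^{14}$, using Gagliardo--Nirenberg between $H$ and $H^3$ and Neumann elliptic regularity $\|u\|_{H^3}^2\lesssim\|u\|_H^2+\|\nabla u\|_H^2+\|\nabla\Delta u\|_H^2$. The superlinear part now depends only on $\|u\|_H$. It enters as a forcing term whose moments of order $7/\varepsilon$ are already controlled by the first step. Gronwall in $\|\nabla u^\varepsilon\|_H^2$ is then linear, and your BDG, $p=1/\varepsilon$ and Chebyshev argument goes through; that part is the same as the paper's.

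Alternatively, keep $C\varepsilon\|u\|_{L^\infty}^4\le C\varepsilon\|u\|_H^2\|u\|_{H^1}^2$ as a random Gronwall coefficient and localize on the first-step quantities. Its time integral is then bounded independently of $\varepsilon$, which keeps the estimate uniform in $\varepsilon\in(0,1]$.
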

\begin{proof}

Applying It\^o's formula to $\|u^\varepsilon(t)\|_{H}^2$, and using the fact that\(\bigl\langle u^\varepsilon\times\Delta u^\varepsilon,\,u^\varepsilon
\bigr\rangle_H = 0,\)
we obtain
\begin{equation}\label{eq:ito-energy}
\begin{aligned}
\|u^\varepsilon(t)\|_{H}^2
&+2\varepsilon\beta_1\int_0^t \|\nabla u^\varepsilon(s)\|_{H}^2\,ds
+2\varepsilon\beta_2\int_0^t \|\Delta u^\varepsilon(s)\|_{H}^2\,ds
+2\varepsilon\beta_3\int_0^t \|u^\varepsilon(s)\|_{L^4}^4\,ds \\
&=
\|u^\varepsilon(0)\|_{H}^2
+2\varepsilon\beta_3\int_0^t \|u^\varepsilon(s)\|_{H}^2\,ds
+2\varepsilon\beta_5\int_0^t \Bigl\langle \Delta\bigl(|u^\varepsilon(s)|^2u^\varepsilon(s)\bigr), u^\varepsilon(s) \Bigr\rangle_H\,ds \\
&\quad
-\varepsilon\sum_{j=1}^{\infty}\int_0^t \Bigl\langle G_j(u^\varepsilon(s))\times h_j, u^\varepsilon(s) \Bigr\rangle_H\,ds
+\varepsilon\sum_{j=1}^{\infty}\int_0^t \|G_j(u^\varepsilon(s))\|_{H}^2\,ds \\
&\quad
+2\sqrt{\varepsilon}\sum_{j=1}^{\infty}\int_0^t \Bigl\langle G_j(u^\varepsilon(s)), u^\varepsilon(s) \Bigr\rangle_H\,dW_j(s).
\end{aligned}
\end{equation}

Since $\dfrac{\partial u^\varepsilon}{\partial n}=0$ on $\partial\mathcal{D}$, integration by parts yields
\begin{equation}\label{eq:integration-parts}
\begin{aligned}
\Bigl\langle \Delta\bigl(|u^\varepsilon|^2u^\varepsilon\bigr),\; u^\varepsilon \Bigr\rangle_H
&= - \Bigl\langle \nabla\bigl(|u^\varepsilon|^2u^\varepsilon\bigr),\; \nabla u^\varepsilon \Bigr\rangle_H \\
&= -2\|u^\varepsilon\cdot\nabla u^\varepsilon\|_{H}^2 - \bigl\||u^\varepsilon|\,|\nabla u^\varepsilon|\bigr\|_{H}^2 .
\end{aligned}
\end{equation}
By H\"older's inequality and Young's inequality, together with the assumption
\(
\sum_{j=1}^{\infty}\|h_j\|_{H^3}<C_h<\infty,
\)
we obtain
\begin{equation}\label{eq:gj-estimate}
\begin{aligned}
\sum_{j=1}^{\infty}
\bigl|
\bigl(
G_j(u^\varepsilon)\times h_j,\,
u^\varepsilon
\bigr)_{H}
\bigr|
&\leq
\sum_{j=1}^{\infty}
\|h_j\|_{L^\infty}
\|G_j(u^\varepsilon)\|_{H}
\|u^\varepsilon\|_{H}
\\
&\leq
\sum_{j=1}^{\infty}
\|h_j\|_{L^\infty}
\Bigl(
\|h_j\|_{L^\infty}\|u^\varepsilon\|_{H}
+
\|h_j\|_{H}
+
\|\Delta h_j\|_{H}
\Bigr)
\|u^\varepsilon\|_{H}
\\
&\leq
C_h\bigl(1+\|u^\varepsilon\|_{H}^2\bigr).
\end{aligned}
\end{equation}

Similarly,
\begin{equation}\label{eq:gj-l2}
\begin{aligned}
\sum_{j=1}^{\infty}
\|G_j(u^\varepsilon)\|_{H}^2
&\leq
\sum_{j=1}^{\infty}
\Bigl(
\|u^\varepsilon\times h_j\|_{H}^2
+
\|h_j\|_{H^2}^2
\Bigr)
\\
&\leq
C_h\bigl(1+\|u^\varepsilon\|_{H}^2\bigr).
\end{aligned}
\end{equation}
Let us now define the following energy functional:
\begin{equation}\label{eq:energy-functional}
\begin{aligned}
|u^\varepsilon|_{L^4}^{H^2,H}(T)
:={}&
\sup_{0\le t\le T}
\|u^\varepsilon(t)\|_{H}^{2}
+
2\varepsilon\beta_{1}
\int_{0}^{T}
\|\nabla u^\varepsilon(s)\|_{H}^{2}\,ds
\\
&\quad
+
2\varepsilon\beta_{2}
\int_{0}^{T}
\|\Delta u^\varepsilon(s)\|_{H}^{2}\,ds
+
2\varepsilon\beta_{3}
\int_{0}^{T}
\|u^\varepsilon(s)\|_{L^{4}}^{4}\,ds
\\
&\quad
+
4\varepsilon\beta_{5}
\int_{0}^{T}
\|u^\varepsilon(s)\cdot\nabla u^\varepsilon(s)\|_{H}^{2}\,ds
\\
&\quad
+
2\varepsilon\beta_{5}
\int_{0}^{T}
\bigl\|
|u^\varepsilon(s)|\,|\nabla u^\varepsilon(s)|
\bigr\|_{H}^{2}\,ds .
\end{aligned}
\end{equation}
Therefore
\begin{equation}\label{eq:energy-estimate}
\begin{aligned}
|u^\varepsilon|_{L^4}^{H^2,H}(T)
\le\;&
\|u_0\|_{H}^{2}
+4\varepsilon C_h T+2\varepsilon(\beta_3+C_h)\int_0^T
\|u^\varepsilon(s)\|_{H}^2\,ds \\
&+
2\sqrt{\varepsilon}
\sup_{0\le t\le T}
\Biggl|
\sum_{j=1}^{\infty}
\int_0^t
\Bigl\langle
G_j(u^\varepsilon(s)),
u^\varepsilon(s)
\Bigr\rangle_H
\,dW_j(s)
\Biggr|.
\end{aligned}
\end{equation}
Then, for any $p\geq 2$, we obtain
\begin{equation}\label{eq:lp-estimate}
\begin{aligned}
\Bigl[
\mathbb{E}
\bigl(
|u^\varepsilon|_{L^4}^{H^2,H}(T)
\bigr)^p
\Bigr]^{\frac1p}
\le\;&
\|u_0\|_{H}^{2}
+4\varepsilon C_h T +
2\varepsilon(\beta_3+C_h)
\Biggl[
\mathbb{E}
\Biggl(
\int_0^T
|u^\varepsilon|_{L^4}^{H^2,H}(s)\,ds
\Biggr)^p
\Biggr]^{\frac1p}\\
&
+2\sqrt{\varepsilon}
\Biggl[\mathbb{E}\Biggl(\sup_{0\le t\le T}\Biggl|
\sum_{j=1}^{\infty}\int_0^t\Bigl\langle
G_j(u^\varepsilon(s)), u^\varepsilon(s)\Bigr\rangle_H\,dW_j(s)\Biggr|\Biggr)^p
\Biggr]^{\frac1p}.
\end{aligned}
\end{equation}
Minkowski's inequality yields
\[
\Biggl[
\mathbb{E}
\Biggl(
\int_0^T
|u^\varepsilon|_{L^4}^{H^2,H}(s)\,ds
\Biggr)^p
\Biggr]^{\frac1p}\le\int_0^T\Bigl(\mathbb{E}\bigl(|u^\varepsilon|_{L^4}^{H^2,H}(s)\bigr)^p\Bigr)^{\frac1p}\,ds.
\]

By the Burkholder--Davis--Gundy inequality and H\"older's inequality, we deduce
\begin{equation}\label{eq:bdg-estimate}
\begin{aligned}
&
2\sqrt{\varepsilon}\Biggl[\mathbb{E}\Biggl(\sup_{0\le t\le T}\Biggl|\sum_{j=1}^{\infty}\int_0^t\Bigl\langle G_j(u^\varepsilon(s)), u^\varepsilon(s)\Bigr\rangle_H\,dW_j(s)\Biggr|\Biggr)^p\Biggr]^{\frac1p} \leq
\sqrt{p\varepsilon}\,C_h\Biggl[\int_0^T\Bigl(1+\bigl(\mathbb{E}\|u^\varepsilon(s)\|_{H}^{2p}\bigr)^{2/p}\Bigr)\,ds
\Biggr]^{\frac12}.
\end{aligned}
\end{equation}
Substituting the above estimates into \eqref{eq:lp-estimate}, we get
\[
\begin{aligned}
\Biggl[
\mathbb{E}
\bigl(
|u^\varepsilon|_{L^4}^{H^2,H}(T)
\bigr)^p
\Biggr]^{\frac{2}{p}}
\le\;&
9\|u_0\|_{H}^4
+
12\varepsilon^2
C_{\beta_2,h}
\int_0^T
\Biggl[
\mathbb{E}
\bigl(
|u^\varepsilon|_{L^4}^{H^2,H}(s)
\bigr)^p
\Biggr]^{\frac{2}{p}}
\,ds
\\
&+
144C_{T,h}\varepsilon^2+9C_{T,h}p\varepsilon+3C_h^2p\varepsilon\int_0^T
\Biggl[\mathbb E\Bigl(|u^\varepsilon|_{L^4}^{H^2,H}(s)\Bigr)^p
\Biggr]^{\frac{2}{p}}
\,ds .
\end{aligned}
\]
Applying Gronwall's inequality, we obtain
\begin{equation}\label{eq:gronwall-estimate}
\begin{aligned}
\Biggl[
\mathbb{E}
\bigl(
|u^\varepsilon|_{L^4}^{H^2,H}(T)
\bigr)^p
\Biggr]^{\frac{2}{p}}
\le\;&
\Bigl(
9\|u_0\|_{H}^4
+
144C_{T,h}\varepsilon^2
+
9C_{T,h}p\varepsilon
\Bigr)
\exp\Bigl\{
12\varepsilon^2
C_{\beta_2,h}T
+
3C_h^2p\varepsilon T
\Bigr\}.
\end{aligned}
\end{equation}
Choosing \(p=\frac1\varepsilon\) in \eqref{eq:gronwall-estimate} and using Chebyshev's inequality, we obtain
\[
\begin{aligned}
\sup_{0<\varepsilon\le1}
\varepsilon
\log
\mathbb P\Bigl(
|u^\varepsilon|_{L^4}^{H^2,H}(T)>M
\Bigr)
\le\;&
-\log M
+\frac12\log\Bigl(
9\|u_0\|_{H}^4+36C_{T,h}\varepsilon^2+9C_{T,h}
\Bigr)
\\
&+
6\varepsilon^2 C_{\beta_2,T,h}T
+\frac32 C_h^2T.
\end{aligned}
\]

Therefore,
\begin{equation}{\label{eq:f1}}
\sup_{0<\varepsilon\le1}
\varepsilon
\log
\mathbb P\Bigl(
|u^\varepsilon|_{L^4}^{H^2,H}(T)>M
\Bigr)\to-\infty,
\qquad\text{as }M\to\infty .
\end{equation}

\medskip

We now apply It\^o's formula to
\(\frac12\|\nabla u^\varepsilon(t)\|_H^2\), we obtain
\begin{equation}\label{eq:h1-ito}
\begin{aligned}
\frac12\|\nabla u^\varepsilon(t)\|_{H}^2
&+\varepsilon\beta_1\int_0^t \|\Delta u^\varepsilon(s)\|_{H}^2\,ds
+\varepsilon\beta_2\int_0^t \|\nabla\Delta u^\varepsilon(s)\|_{H}^2\,ds +\varepsilon\beta_3\int_0^t
\Bigl\langle
\nabla\bigl(|u^\varepsilon(s)|^2u^\varepsilon(s)\bigr),
\nabla u^\varepsilon(s)
\Bigr\rangle_H\,ds \\
&\qquad=
\frac12\|\nabla u^\varepsilon(0)\|_{H}^2
+\varepsilon\beta_3\int_0^t
\|\nabla u^\varepsilon(s)\|_{H}^2\,ds +\varepsilon\beta_5\int_0^t
\Bigl\langle
\nabla\Delta\bigl(|u^\varepsilon(s)|^2u^\varepsilon(s)\bigr),
\nabla u^\varepsilon(s)
\Bigr\rangle_H\,ds \\
&-\frac{\varepsilon}{2}
\sum_{j=1}^{\infty}
\int_0^t
\Bigl\langle
\nabla\bigl(G_j(u^\varepsilon(s))\times h_j\bigr),
\nabla u^\varepsilon(s)
\Bigr\rangle_H\,ds +\frac{\varepsilon}{2}
\sum_{j=1}^{\infty}
\int_0^t
\|\nabla G_j(u^\varepsilon(s))\|_{H}^2\,ds \\
&+\sqrt{\varepsilon}
\sum_{j=1}^{\infty}
\int_0^t
\Bigl\langle
\nabla G_j(u^\varepsilon(s)),
\nabla u^\varepsilon(s)
\Bigr\rangle_H\,dW_j(s).
\end{aligned}
\end{equation}
By \eqref{eq:aux1}, we obtain
\begin{equation}\label{eq:h1-nonlinear1}
\begin{aligned}
\varepsilon\beta_3\int_0^t
\Bigl\langle
\nabla\bigl(|u^\varepsilon|^2u^\varepsilon\bigr),
\nabla u^\varepsilon
\Bigr\rangle_H \, ds
=
&\;
2\varepsilon\beta_3\int_0^t
\|u^\varepsilon\cdot\nabla u^\varepsilon\|_{H}^2 \, ds
+
\varepsilon\beta_3\int_0^t
\||u^\varepsilon|\,|\nabla u^\varepsilon|\|_{H}^2 \, ds .
\end{aligned}
\end{equation}

Moreover, by \eqref{eq:aux3} , we get
\begin{equation}\label{eq:h1-nonlinear2}
\begin{aligned}
\Bigl\langle
\nabla\Delta\bigl(|u|^2u\bigr),
\nabla u
\Bigr\rangle_H
&=
-
\Bigl\langle
\Delta\bigl(|u|^2u\bigr),
\Delta u
\Bigr\rangle_H
\\
&=
-2\Bigl\langle
|\nabla u|^2u,
\Delta u
\Bigr\rangle_H
-2\|u\cdot\Delta u\|_{H}^2
\\
&
-\||u|\,|\Delta u|\|_{H}^2
-4\Bigl\langle
\nabla u (u\cdot\nabla u)^\top,
\Delta u
\Bigr\rangle_H .
\end{aligned}
\end{equation}
Applying Young's inequality, we further infer that
\begin{equation}\label{eq:h1-young}
\begin{aligned}
2\Bigl|
\Bigl\langle
|\nabla u|^2u,
\Delta u
\Bigr\rangle_H
\Bigr|
&\le
\frac{1}{2}\||u|\,|\Delta u|\|_{H}^2
+ C_1\|\nabla u\|_{L^4}^4,
\\
4\Bigl|
\Bigl\langle
\nabla u (u\cdot\nabla u)^\top,
\Delta u
\Bigr\rangle_H
\Bigr|
&\le
\frac{1}{2}\||u|\,|\Delta u|\|_{H}^2
+ C_2\|\nabla u\|_{L^4}^4,
\end{aligned}
\end{equation}
where \(C_1,C_2>0\) are positive constants. Consequently, combining \eqref{eq:h1-nonlinear2} and \eqref{eq:h1-young}, we obtain
\begin{equation}\label{eq:h1-nonlinear3}
\begin{aligned}
\Bigl\langle
\Delta\bigl(|u|^2u\bigr),
\Delta u
\Bigr\rangle_H
&\le
-2\|u\cdot\Delta u\|_{H}^2
+
(C_1+C_2)\|\nabla u\|_{L^4}^4 .
\end{aligned}
\end{equation}
Therefore,
\begin{equation}\label{eq:h1-nonlinear4}
\begin{aligned}
-\varepsilon\beta_5\int_0^t
\Bigl\langle
\nabla\Delta\bigl(|u^\varepsilon|^2u^\varepsilon\bigr),
\nabla u^\varepsilon
\Bigr\rangle_H ds
&=
\varepsilon\beta_5\int_0^t
\Bigl\langle
\Delta\bigl(|u^\varepsilon|^2u^\varepsilon\bigr),
\Delta u^\varepsilon
\Bigr\rangle_H ds
\\
&\le
\varepsilon\int_0^t
\Bigl(
-2\beta_5\|u^\varepsilon\cdot\Delta u^\varepsilon\|_{H}^2
+\beta_5 C_3\|\nabla u^\varepsilon\|_{L^4}^4
\Bigr)\,ds ,
\end{aligned}
\end{equation}
where \(C_3=C_1+C_2\).

Next, we estimate the term \(\|\nabla u^\varepsilon\|_{L^4}^4\) using the Gagliardo--Nirenberg inequality. In the one-dimensional case, we have
\begin{equation}\label{eq:GN}
\|\nabla u^\varepsilon\|_{L^4}^4
\leq
\|u^\varepsilon\|_{H}^{\frac72}
\|u^\varepsilon\|_{H^3}^{\frac12}
\le
\frac{\beta_2}{2C_3\beta_5}
\|u^\varepsilon\|_{H^3}^2
+
C_{\beta_2,\beta_5}
\|u^\varepsilon\|_{H}^{14}.
\end{equation}
Moreover, by the standard elliptic regularity estimate with homogeneous Neumann boundary conditions\cite{gri}, it follows that
\begin{equation}\label{eq:elliptic}
\|u^\varepsilon\|_{H^3}^2
\eqsim
\|u^\varepsilon\|_{H}^2
+
\|\nabla u^\varepsilon\|_{H}^2
+
\|\nabla\Delta u^\varepsilon\|_{H}^2 .
\end{equation}

Combining \eqref{eq:GN} and \eqref{eq:elliptic}, we deduce that
\begin{equation}\label{eq:GN-final}
\begin{aligned}
\|\nabla u^\varepsilon\|_{L^4}^4
\le\;&
\frac{\beta_2}{2C_3\beta_5}
\|\nabla\Delta u^\varepsilon\|_{H}^2
+
\frac{\beta_2}{2C_3\beta_5}
\|\nabla u^\varepsilon\|_{H}^2
+
C_{\beta_2,\beta_5}
\bigl(
1+\|u^\varepsilon\|_{H}^{14}
\bigr).
\end{aligned}
\end{equation}
Using assumption \(
\sum_{j=1}^{\infty}\|h_j\|_{H^3}<C_h<\infty,
\) together with H\"older's and Young's inequalities, we obtain
\begin{equation}\label{eq:h1-gj1}
\begin{aligned}
\sum_{j=1}^{\infty}
\Bigl|
\Bigl\langle
\nabla\bigl(G_j(u^\varepsilon)\times h_j\bigr),
\nabla u^\varepsilon
\Bigr\rangle_H
\Bigr|
&=
\sum_{j=1}^{\infty}
\Bigl|
\Bigl\langle
G_j(u^\varepsilon)\times h_j,
\Delta u^\varepsilon
\Bigr\rangle_H
\Bigr|
\\
&\le
\sum_{j=1}^{\infty}
\|h_j\|_{L^\infty}
\|G_j(u^\varepsilon)\|_{H}
\|\Delta u^\varepsilon\|_{H}
\\
&\le
\sum_{j=1}^{\infty}
\|h_j\|_{L^\infty}
\Bigl(
\|h_j\|_{L^\infty}\|u^\varepsilon\|_{H}
+\|h_j\|_{H}
+\|\Delta h_j\|_{H}
\Bigr)
\|\Delta u^\varepsilon\|_{H}
\\
&\le
{\beta_1}\|\Delta u^\varepsilon\|_{H}^2
+
C_{h,\beta_1}\|u^\varepsilon\|_{H}^2
+
C_{h,\beta_1}.
\end{aligned}
\end{equation}

Therefore,
\begin{equation}\label{eq:h1-gj2}
\begin{aligned}
-\frac{\varepsilon}{2}
\sum_{j=1}^{\infty}
\Bigl\langle
\nabla\bigl(G_j(u^\varepsilon)\times h_j\bigr),
\nabla u^\varepsilon
\Bigr\rangle_H
\le\;&
\frac{\varepsilon\beta_1}{2}
\|\Delta u^\varepsilon\|_{H}^2
+
\frac{\varepsilon}{2}
\Bigl(
C_{h,\beta_1}\|u^\varepsilon\|_{H}^2
+
C_{h,\beta_1}
\Bigr).
\end{aligned}
\end{equation}
Also we have,
\begin{equation}\label{eq:h1-gj3}
\begin{aligned}
\sum_{j=1}^{\infty}
\|\nabla G_j(u^\varepsilon)\|_{H}^2
&\leq
\sum_{j=1}^{\infty}
\Bigl(
\|\nabla(u^\varepsilon\times h_j)\|_{H}^2
+
\|\nabla h_j\|_{H}^2
+
\|\nabla\Delta h_j\|_{H}^2
\Bigr)
\\
&\le
C_h\bigl(1+\|\nabla u^\varepsilon\|_{H}^2\bigr).
\end{aligned}
\end{equation}

Hence,
\begin{equation}\label{eq:h1-gj4}
\frac{\varepsilon}{2}
\sum_{j=1}^{\infty}
\|\nabla G_j(u^\varepsilon)\|_{H}^2
\le
\frac{\varepsilon}{2}
C_h
\bigl(
1+\|\nabla u^\varepsilon\|_{H}^2
\bigr).
\end{equation}

Using \eqref{eq:GN-final}, \eqref{eq:h1-gj2}, and \eqref{eq:h1-gj4}, and substituting these estimates into \eqref{eq:h1-ito}, together with \eqref{eq:h1-nonlinear1} and \eqref{eq:h1-nonlinear4}, we obtain
\begin{equation}\label{eq:final_energy_estimate2}
\begin{aligned}
\|\nabla u^\varepsilon(t)\|_{H}^2
&+\varepsilon\beta_1\int_0^t \|\Delta u^\varepsilon(s)\|_{H}^2\,ds
+\varepsilon\beta_2\int_0^t \|\nabla\Delta u^\varepsilon(s)\|_{H}^2\,ds \\
&+2\varepsilon\beta_3\int_0^t
\Bigl(
2\|u^\varepsilon(s)\cdot\nabla u^\varepsilon(s)\|_{H}^2
+\bigl\||u^\varepsilon(s)|\,|\nabla u^\varepsilon(s)|\bigr\|_{H}^2
\Bigr)\,ds +4\varepsilon\beta_5\int_0^t
\|u^\varepsilon(s)\cdot\Delta u^\varepsilon(s)\|_{H}^2\,ds \\
&\le
\|\nabla u^\varepsilon(0)\|_{H}^2
+2\varepsilon C_{\beta_2,\beta_5}
\int_0^t \|u^\varepsilon(s)\|_{H}^{14}\,ds +\varepsilon C_{h,\beta_1}\int_0^t \|u^\varepsilon(s)\|_{H}^2\,ds\\
&+\varepsilon
\bigl(2C_h+\beta_2+2\beta_3\bigr)
\int_0^t \|\nabla u^\varepsilon(s)\|_{H}^2\,ds +\varepsilon
\bigl(C_{h,\beta_1}+C_h+2C_{\beta_2,\beta_5}\bigr)t \\
&+2\sqrt{\varepsilon}
\sum_{j=1}^{\infty}
\int_0^t
\Bigl\langle
\nabla G_j(u^\varepsilon(s)),
\nabla u^\varepsilon(s)
\Bigr\rangle_H\,dW_j(s).
\end{aligned}
\end{equation}
Let us now define the following functional
\begin{equation}
\begin{aligned}
\|u^\varepsilon\|_{H^1,H^2}^{H^3}(t)
:=\;&
\|\nabla u^\varepsilon(t)\|_{H}^2
+\varepsilon\beta_1\int_0^t \|\Delta u^\varepsilon(s)\|_{H}^2\,ds +
\varepsilon\beta_2\int_0^t \|\nabla\Delta u^\varepsilon(s)\|_{H}^2\,ds \\
&+
4\varepsilon\beta_3\int_0^t
\|u^\varepsilon(s)\cdot\nabla u^\varepsilon(s)\|_{H}^2\,ds +
2\varepsilon\beta_3\int_0^t
\bigl\||u^\varepsilon(s)|\,|\nabla u^\varepsilon(s)|\bigr\|_{H}^2\,ds \\
&+
4\varepsilon\beta_5\int_0^t
\|u^\varepsilon(s)\cdot\Delta u^\varepsilon(s)\|_{H}^2\,ds .
\end{aligned}
\end{equation}
Then, for any \(p\ge2\), by using \eqref{eq:gronwall-estimate} and Minkowski's inequality, we obtain
\begin{equation}\label{eq:higher_moment_estimate}
\begin{aligned}
\Bigl[
\mathbb{E}
\Bigl(
\|u^\varepsilon\|_{H^1,H^2}^{H^3}(T)
\Bigr)^p
\Bigr]^{1/p}&
\le
\|\nabla u^\varepsilon(0)\|_{H}^2
+
\varepsilon C_{\beta_2,\beta_5,\varepsilon,h,T}+\varepsilon\bigl(2C_h+\beta_2+2\beta_3\bigr)\int_0^T\Bigl[\mathbb{E}
\|\nabla u^\varepsilon(s)\|_{H}^{2p}
\Bigr]^{1/p}
\,ds
\\
&\quad
+
\varepsilon C_{h,\beta_1,\beta_2,\beta_5}T+2\sqrt{\varepsilon}\Biggl[\mathbb{E}\Biggl(\sup_{0\le T}\Biggl|\sum_{j=1}^{\infty}\int_0^t
\Bigl\langle
\nabla G_j(u^\varepsilon(s)),
\nabla u^\varepsilon(s)
\Bigr\rangle_H
\,dW_j(s)
\Biggr|
\Biggr)^p
\Biggr]^{1/p}.
\end{aligned}
\end{equation}
On the other hand, by the Burkholder--Davis--Gundy inequality and
H\"older's inequality, we deduce
\begin{equation}\label{eq:bdg_higher}
\begin{aligned}
&
2\sqrt{\varepsilon}
\Biggl[
\mathbb{E}
\Biggl(
\sup_{0\le t\le T}
\Biggl|
\sum_{j=1}^{\infty}
\int_0^t
\Bigl\langle
\nabla G_j(u^\varepsilon(s)),
\nabla u^\varepsilon(s)
\Bigr\rangle_H
\,dW_j(s)
\Biggr|
\Biggr)^p
\Biggr]^{1/p}
\\
&\leq
\sqrt{p\varepsilon}\,C_h
\Biggl[
\int_0^T
\Bigl(
1+
\bigl(
\mathbb{E}
\|\nabla u^\varepsilon(s)\|_H^{2p}
\bigr)^{2/p}
\Bigr)
\,ds
\Biggr]^{1/2}.
\end{aligned}
\end{equation}
Substituting \eqref{eq:bdg_higher} into
\eqref{eq:higher_moment_estimate}, we obtain
\begin{equation}
\begin{aligned}
\Biggl[
\mathbb{E}
\Bigl(
\|u^\varepsilon\|_{H^1,H^2}^{H^3}(T)
\Bigr)^p
\Biggr]^{2/p}\le&
9\|\nabla u^\varepsilon(0)\|_{H}^{4}
+9 C_{T,h}p\varepsilon+ C_{h,\beta_1,\beta_2,\beta_3,\beta_5,T}\varepsilon^{2}\\
&\qquad
+ \bigl( \varepsilon^2 C_{h,\beta_2 ,\beta_3} + C_{h}^{2}p\varepsilon \bigr)
\int_{0}^{T}
\Biggl[
\mathbb{E}
\Bigl(
\|u^\varepsilon\|_{H^1,H^2}^{H^3}(s)
\Bigr)^p
\Biggr]^{2/p}
\,ds .
\end{aligned}
\end{equation}
Applying Gronwall's inequality, we conclude that
\begin{equation}
\begin{aligned}
&
\Biggl[
\mathbb{E}
\Bigl(
\|u^\varepsilon\|_{H^1,H^2}^{H^3}(T)
\Bigr)^p
\Biggr]^{2/p}
\\
&\le
\Bigl(
9\|\nabla u^\varepsilon(0)\|_{H}^4
+
C_{T,h}p\varepsilon
+
C_{h,\beta_1,\beta_2,\beta_3,\beta_5,T}\varepsilon^{2}
\Bigr)
\exp\Bigl\{
\bigl( \varepsilon^2 C_{h,\beta_2,\beta_3} + C_{h}^{2}p\varepsilon \bigr) T
\Bigr\}.
\end{aligned}
\end{equation}
Choosing \(p = \frac{1}{\varepsilon}\) and simplifying, we have
\[
\begin{aligned}
\sup_{0<\varepsilon\le 1}
\varepsilon
\log
\mathbb P\Bigl(
\|u^\varepsilon\|_{H^1,H^2}^{H^3}(T) > M
\Bigr)
\le\;&
-\log M
+ \frac12 \log\Bigl(
9\|\nabla u^\varepsilon(0)\|_{H}^4
+ C_{h,\beta_1,\beta_2,\beta_3,\beta_5,T}\varepsilon^{2}
+ C_{T,h}
\Bigr)
\\
&
+ \frac{C_{h,\beta_2,\beta_5}\varepsilon^2 T}{2}
+ \frac{C_h^2 T}{2}.
\end{aligned}
\]

Therefore,
\begin{equation}{\label{eq:f2}}
\sup_{0<\varepsilon\le 1}
\varepsilon
\log
\mathbb P\Bigl(
\|u^\varepsilon\|_{H^1,H^2}^{H^3}(T) > M
\Bigr) \to -\infty,
\qquad \text{as } M \to \infty .
\end{equation}
From \eqref{eq:f1} and \eqref{eq:f2} we can easily conclude
\[
\lim_{M\to\infty}
\sup_{0<\varepsilon\le1}
\varepsilon
\log
\mathbb{P}
\left(
\|u^\varepsilon\|_{H^1}^{H^2}(T) > M
\right)
=
-\infty.
\]
This completes the proof.
\end{proof}
Since \(H^2\) is dense in \(H^1\), there exists a sequence
\(
(x_n)_{n\in\mathbb N}\subset H^2
\)
such that
\[
\lim_{n\to\infty}
\|x_n-x\|_{H^1}=0.
\]

Let \(u_n^\varepsilon\) be the solution of Eq.~\eqref{eq:ito_LLBar}
with initial value \(x_n\).
Similar to Lemma~\ref{lemma1}, one obtains
\[
\lim_{M\to\infty}\sup_{n\in\mathbb N}
\sup_{0<\varepsilon\le1}
\varepsilon
\log
\mathbb P
\Bigl(
\|u_n^\varepsilon\|_{H^1}^{H^2}(T)>M
\Bigr)
=-\infty,
\]
where
\[
\|u_n^\varepsilon\|_{H^1}^{H^2}(T)
:=
\sup_{0\le t\le T}
\|u_n^\varepsilon(t)\|_{H^1}^{2}
+
2C\varepsilon
\int_{0}^{T}
\|u_n^\varepsilon(s)\|_{H^2}^{2}\,ds .
\]
\begin{lemma}{\label{lemma2}}
For any $\delta>0$,
\[
\lim_{n\to\infty}\sup_{0<\varepsilon\le1}
\varepsilon
\log
\mathbb P
\left(
\sup_{0\le t\le T}
\|u^\varepsilon(t)-u_n^\varepsilon(t)\|_{H}^2>\delta
\right)
=-\infty .
\]
\end{lemma}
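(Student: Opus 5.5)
We would prove Lemma~\ref{lemma2} by a localized energy estimate on the difference \(w_n^\varepsilon:=u^\varepsilon-u_n^\varepsilon\) in \(H\), followed by the same \(p=1/\varepsilon\) moment argument used in Lemma~\ref{lemma1}. Since the nonlinearities are only locally Lipschitz, we first introduce the stopping time
\[
\tau_M^{n,\varepsilon}:=\inf\Bigl\{t\ge0:\ \|u^\varepsilon\|_{H^1}^{H^2}(t)>M \ \text{or}\ \|u_n^\varepsilon\|_{H^1}^{H^2}(t)>M\Bigr\}\wedge T .
\]
We then split the probability as
\[
\mathbb P\Bigl(\sup_{t\le T}\|w_n^\varepsilon(t)\|_H^2>\delta\Bigr)\le \mathbb P\Bigl(\sup_{t\le \tau_M^{n,\varepsilon}}\|w_n^\varepsilon(t)\|_H^2>\delta\Bigr)+\mathbb P\bigl(\|u^\varepsilon\|_{H^1}^{H^2}(T)>M\bigr)+\mathbb P\bigl(\|u_n^\varepsilon\|_{H^1}^{H^2}(T)>M\bigr).
\]
By Lemma~\ref{lemma1} and its uniform-in-\(n\) analogue stated above, the last two terms have \(\varepsilon\log\) tending to \(-\infty\) as \(M\to\infty\), uniformly in \(n\) and \(\varepsilon\le1\). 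It therefore suffices to show that, for fixed \(M\), \(\sup_{\varepsilon\le1}\varepsilon\log\) of the first term tends to \(-\infty\) as \(n\to\infty\).

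For fixed \(M\) we apply It\^o's formula to \(\|w_n^\varepsilon(t\wedge\tau_M)\|_H^2\). The linear terms produce the dissipation \(2\varepsilon\beta_1\|\nabla w\|_H^2+2\varepsilon\beta_2\|\Delta w\|_H^2\). Up to \(\tau_M\) we have \(\|u\|_{L^\infty},\|u_n\|_{L^\infty}\le C\sqrt M\), so the nonlinear differences are controlled as follows.
\begin{itemize}
\item The cubic term: \(\langle |u|^2u-|u_n|^2u_n,w\rangle\le C_M\|w\|_H^2\).
\item The gyromagnetic term: \(\langle u\times\Delta u-u_n\times\Delta u_n,w\rangle=\langle w\times\Delta u_n,w\rangle=0\), since the first contribution \(\langle u\times \Delta w,w\rangle\) vanishes as well after writing \(u\times\Delta u-u_n\times\Delta u_n=u\times\Delta w+w\times\Delta u_n\) and using \(\langle w\times\Delta u_n,w\rangle=0\). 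For the piece \(\langle u\times\Delta w,w\rangle\) we integrate by parts to get \(-\langle \nabla u\times\nabla w,w\rangle\le \|\nabla u\|_H\|\nabla w\|_H\|w\|_{L^\infty}\), and then use Agmon's inequality and Young's inequality to absorb it into \(\varepsilon\beta_1\|\nabla w\|^2+\varepsilon\beta_2\|\Delta w\|^2\), leaving \(\varepsilon C_M\|w\|_H^2\).
\item The term \(\langle\Delta(|u|^2u-|u_n|^2u_n),w\rangle=\langle |u|^2u-|u_n|^2u_n,\Delta w\rangle\le \tfrac{\beta_2}{2}\|\Delta w\|_H^2+C_M\|w\|_H^2\).
\item The Stratonovich correction and the It\^o term: since \(G_j(u)-G_j(u_n)=-w\times h_j\), we have \(\langle (w\times h_j)\times h_j,w\rangle\) and \(\sum_j\|w\times h_j\|_H^2\le C_h\|w\|_H^2\).
\item The martingale: its quadratic variation is bounded by \(C_h\varepsilon\int\|w\|_H^4\).
\end{itemize}
This yields
\[
\sup_{s\le t\wedge\tau_M}\|w(s)\|_H^2\le \|x-x_n\|_H^2+\varepsilon C_M\int_0^{t\wedge\tau_M}\|w\|_H^2\,ds+2\sqrt\varepsilon\sup_{s\le t\wedge\tau_M}\Bigl|\sum_j\int_0^s\langle -w\times h_j,w\rangle\,dW_j\Bigr|.
\]

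Next we take \(L^p(\Omega)\) norms and use the BDG inequality with constant \(\sqrt p\), exactly as in \eqref{eq:bdg-estimate}, together with Minkowski's inequality and Gronwall's inequality. This gives
\[
\Bigl[\mathbb E\sup_{s\le T\wedge\tau_M}\|w(s)\|_H^{2p}\Bigr]^{2/p}\le 2\|x-x_n\|_H^4\exp\bigl\{(C_M\varepsilon^2+C_hp\varepsilon)T\bigr\}.
\]
Choosing \(p=1/\varepsilon\) and applying Chebyshev's inequality, we get
\[
\varepsilon\log\mathbb P\Bigl(\sup_{t\le\tau_M}\|w\|_H^2>\delta\Bigr)\le \log\|x-x_n\|_H^2-\log\delta+C_{M,h,T}
\]
uniformly in \(\varepsilon\in(0,1]\). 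This tends to \(-\infty\) as \(n\to\infty\) because \(\|x_n-x\|_{H}\le\|x_n-x\|_{H^1}\to0\). Finally we let \(n\to\infty\) and then \(M\to\infty\).

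The main obstacle is the \(\varepsilon\)-scaling. The dissipation carries a factor \(\varepsilon\), so every absorption must also carry an \(\varepsilon\) factor. This holds because every drift term in \eqref{eq:ito_LLBar} has prefactor \(\varepsilon\), so the resulting Gronwall constant is \(\varepsilon C_M\) and stays bounded. The delicate terms are \(\Delta(|u|^2u)\) and \(u\times\Delta u\). They must be bounded using only the stopped \(H^1\) bound, that is, \(L^\infty\) control, and the time-integrated quantity \(\varepsilon\int\|u\|_{H^2}^2\le M\). If a pointwise \(\|\Delta u\|\) factor appears, we would put it into the Gronwall exponent as \(\varepsilon\int\|u\|_{H^2}^2\,ds\le M\), which is still bounded before \(\tau_M\).
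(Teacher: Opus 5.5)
Your proposal is correct and takes essentially the same route as the paper. You localize with stopping times controlled by Lemma~\ref{lemma1}, apply It\^o's formula to $\|w_n^\varepsilon\|_H^2$ so that every drift difference carries a factor $\varepsilon$ and is absorbed into the $\varepsilon$-dissipation, close with Gronwall, and finish with $p\sim 1/\varepsilon$ moments, Chebyshev and the three-term splitting. Two small corrections:

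\begin{itemize}
\item Your first display for the gyromagnetic term wrongly asserts that the whole difference vanishes. Only $\langle w\times\Delta u_n,w\rangle_H$ is zero; $\langle u\times\Delta w,w\rangle_H$ is genuinely nonzero. Your integration-by-parts and Agmon bound for that piece is fine. The paper instead bounds it directly by $\|u\|_{L^\infty}\|\Delta w\|_H\|w\|_H$ and absorbs it into $\varepsilon\beta_2\|\Delta w\|_H^2$.
\item Since $\langle w\times h_j,w\rangle_H=0$, the martingale term vanishes identically and the It\^o correction cancels the Stratonovich one, as the paper observes. The estimate is therefore pathwise, and the BDG step is unnecessary (though harmless).
\end{itemize}
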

\begin{proof}
Let \( w_n^\varepsilon = u_n^\varepsilon - u^\varepsilon \). Then \( w_n^\varepsilon \) satisfies
\[
\begin{aligned}
w_n^\varepsilon(t) &= (x_n - x) + \varepsilon \beta_1 \int_0^t \Delta w_n^\varepsilon(s) \,ds - \varepsilon \beta_2 \int_0^t \Delta^2 w_n^\varepsilon(s) \,ds + \varepsilon \beta_3 \int_0^t \bigl[ (1 - |u_n^\varepsilon|^2) u_n^\varepsilon - (1 - |u^\varepsilon|^2) u^\varepsilon \bigr] \,ds   \\
&\quad- \varepsilon \beta_4 \int_0^t \bigl[ u_n^\varepsilon \times \Delta u_n^\varepsilon - u^\varepsilon \times \Delta u^\varepsilon \bigr] \,ds + \varepsilon \beta_5 \int_0^t \Delta \bigl( |u_n^\varepsilon|^2 u_n^\varepsilon - |u^\varepsilon|^2 u^\varepsilon \bigr) \,ds
+ \frac{\varepsilon}{2} \sum_{j=1}^\infty \int_0^t (w_n^\varepsilon \times h_j) \times h_j \,ds  \\
&\quad\qquad\qquad- \sqrt{\varepsilon} \sum_{j=1}^\infty \int_0^t (w_n^\varepsilon \times h_j) \,dW_j(s).
\end{aligned}
\]
Applying the Itô formula to $\|w_n^\varepsilon(t)\|_H^2$, we obtain
\begin{equation}
\begin{aligned}
\|w_n^\varepsilon(t)\|_{H}^{2}
&= \|x_n-x\|_{H}^{2}
+ 2\varepsilon\beta_{1}\int_{0}^{t} \langle w_n^\varepsilon,\Delta w_n^\varepsilon\rangle_{H}\,ds
- 2\varepsilon\beta_{2}\int_{0}^{t} \langle w_n^\varepsilon,\Delta^{2}w_n^\varepsilon\rangle_{H}\,ds \\
&\quad + 2\varepsilon\beta_{3}\int_{0}^{t} \bigl\langle w_n^\varepsilon,\,
(1-|u_n^\varepsilon|^{2})u_n^\varepsilon - (1-|u^\varepsilon|^{2})u^\varepsilon \bigr\rangle_{H}\,ds 
- 2\varepsilon\beta_{4}\int_{0}^{t} \bigl\langle w_n^\varepsilon,\,
u_n^\varepsilon\times\Delta u_n^\varepsilon - u^\varepsilon\times\Delta u^\varepsilon \bigr\rangle_{H}\,ds \\
&\quad + 2\varepsilon\beta_{5}\int_{0}^{t} \bigl\langle w_n^\varepsilon,\,
\Delta\bigl( |u_n^\varepsilon|^{2}u_n^\varepsilon - |u^\varepsilon|^{2}u^\varepsilon \bigr) \bigr\rangle_{H}\,ds 
+ \varepsilon\sum_{j=1}^{\infty}\int_{0}^{t} \bigl\langle w_n^\varepsilon,\,
(w_n^\varepsilon\times h_j)\times h_j \bigr\rangle_{H}\,ds \\
&\quad - 2\sqrt{\varepsilon}\sum_{j=1}^{\infty}\int_{0}^{t} \bigl\langle w_n^\varepsilon,\,
w_n^\varepsilon\times h_j \bigr\rangle_{H}\,dW_j(s) 
+ \varepsilon\sum_{j=1}^{\infty}\int_{0}^{t} \|w_n^\varepsilon\times h_j\|_{H}^{2}\,ds .
\end{aligned}
\end{equation}
Using the facts that
\(
\bigl\langle w_n^\varepsilon\times h_j,\; w_n^\varepsilon \bigr\rangle_H = 0,
\)
and
\(
\bigl\langle (w_n^\varepsilon\times h_j)\times h_j,\;
w_n^\varepsilon \bigr\rangle_H
=
-\,\|w_n^\varepsilon\times h_j\|_H^2,
\)
we obtain
\[
\begin{aligned}
&
2\sqrt{\varepsilon}
\sum_{j=1}^{\infty}
\int_0^t
\bigl\langle
w_n^\varepsilon\times h_j,\;
w_n^\varepsilon
\bigr\rangle_H
\,dW_j(s)
\\
&\qquad
+
\varepsilon
\sum_{j=1}^{\infty}
\int_0^t
\bigl\langle(w_n^\varepsilon\times h_j)\times h_j,\;w_n^\varepsilon\bigr\rangle_H\,ds
\\
&=-\varepsilon\sum_{j=1}^{\infty}\int_0^t\|w_n^\varepsilon\times h_j\|_H^2\,ds .
\end{aligned}
\]
Hence, we obtain
\begin{equation}{\label{wn_H}}
\begin{aligned}
\|w_n^\varepsilon(t)\|_H^2
&+ 2\varepsilon\beta_1 \int_0^t \|\nabla w_n^\varepsilon(s)\|_H^2 \,ds
+ 2\varepsilon\beta_2 \int_0^t \|\Delta w_n^\varepsilon(s)\|_H^2 \,ds \\
&= \|x_n-x\|_H^2
+ 2\varepsilon\beta_3 \int_0^t
\Bigl\langle (1-|u_n^\varepsilon|^2)u_n^\varepsilon - (1-|u^\varepsilon|^2)u^\varepsilon,\;
w_n^\varepsilon \Bigr\rangle_H \,ds \\
&\quad - 2\varepsilon\beta_4 \int_0^t
\Bigl\langle u_n^\varepsilon\times\Delta u_n^\varepsilon - u^\varepsilon\times\Delta u^\varepsilon,\;
w_n^\varepsilon \Bigr\rangle_H \,ds \\
&\quad + 2\varepsilon\beta_5 \int_0^t
\Bigl\langle \Delta\bigl( |u_n^\varepsilon|^2u_n^\varepsilon - |u^\varepsilon|^2u^\varepsilon \bigr),\;
w_n^\varepsilon \Bigr\rangle_H \,ds .
\end{aligned}
\end{equation}

For convenience, we define the terms as follows:
\[
\begin{aligned}
I_1(t) &:= 2\varepsilon\beta_3 \int_0^t
\Bigl\langle (1-|u_n^\varepsilon|^2)u_n^\varepsilon - (1-|u^\varepsilon|^2)u^\varepsilon,\;
w_n^\varepsilon \Bigr\rangle_H \,ds, \\[4pt]
I_2(t) &:= -2\varepsilon\beta_4 \int_0^t
\Bigl\langle u_n^\varepsilon\times\Delta u_n^\varepsilon - u^\varepsilon\times\Delta u^\varepsilon,\;
w_n^\varepsilon \Bigr\rangle_H \,ds, \\[4pt]
I_3(t) &:= 2\varepsilon\beta_5 \int_0^t
\Bigl\langle \Delta\bigl( |u_n^\varepsilon|^2u_n^\varepsilon - |u^\varepsilon|^2u^\varepsilon \bigr),\;
w_n^\varepsilon \Bigr\rangle_H \,ds .
\end{aligned}
\]
Thus the equation \eqref{wn_H} can be written as
\[
\|w_n^\varepsilon(t)\|_H^2
+ 2\varepsilon\beta_1 \int_0^t \|\nabla w_n^\varepsilon(s)\|_H^2 \,ds
+ 2\varepsilon\beta_2 \int_0^t \|\Delta w_n^\varepsilon(s)\|_H^2 \,ds
= \|x_n-x\|_H^2 + I_1(t) + I_2(t) + I_3(t).
\]
\(I_1\) can also be written as
\[
\begin{aligned}
I_1(t) &= 2\varepsilon\beta_3 \int_0^t \|w_n^\varepsilon\|_H^2 \,ds
- 2\varepsilon\beta_3 \int_0^t
\Bigl\langle |u_n^\varepsilon|^2 u_n^\varepsilon - |u^\varepsilon|^2 u^\varepsilon,\;
w_n^\varepsilon \Bigr\rangle_H \,ds .
\end{aligned}
\]
Using the identity
\(
|u_n^\varepsilon|^2 u_n^\varepsilon - |u^\varepsilon|^2 u^\varepsilon
= |u_n^\varepsilon|^2 w_n^\varepsilon + (|u_n^\varepsilon|^2 - |u^\varepsilon|^2) u^\varepsilon,
\)
we obtain
\[
\begin{aligned}
|I_1(t)| &\leq 2\varepsilon\beta_3 \int_0^t \|w_n^\varepsilon\|_H^2 \,ds
+ 2\varepsilon\beta_3 \int_0^t \bigl\| |u_n^\varepsilon|^2 w_n^\varepsilon \bigr\|_H \|w_n^\varepsilon\|_H \,ds \\
&\quad + 2\varepsilon\beta_3 \int_0^t \bigl\| (|u_n^\varepsilon|^2 - |u^\varepsilon|^2) u^\varepsilon \bigr\|_H \|w_n^\varepsilon\|_H \,ds .
\end{aligned}
\]
 Since \(H^1 \hookrightarrow L^\infty\) in 1D, we have the bounds
\[
\||u_n^\varepsilon|^2 w_n^\varepsilon\|_H \leq C \|u_n^\varepsilon\|_{H^1}^2 \|w_n^\varepsilon\|_H,
\]
and
\[
\bigl\| (|u_n^\varepsilon|^2 - |u^\varepsilon|^2) u^\varepsilon \bigr\|_H
\leq C \bigl( \|u_n^\varepsilon\|_{H^1} + \|u^\varepsilon\|_{H^1} \bigr) \|u^\varepsilon\|_{H^1} \|w_n^\varepsilon\|_H .
\]

Thus,
\[
\begin{aligned}
|I_1(t)| &\leq 2\varepsilon\beta_3 \int_0^t \|w_n^\varepsilon\|_H^2 \,ds \\
&\quad + C\varepsilon\beta_3 \int_0^t \|u_n^\varepsilon\|_{H^1}^2 \|w_n^\varepsilon\|_H^2 \,ds \\
&\quad + C\varepsilon\beta_3 \int_0^t \bigl( \|u_n^\varepsilon\|_{H^1} + \|u^\varepsilon\|_{H^1} \bigr) \|u^\varepsilon\|_{H^1} \|w_n^\varepsilon\|_H^2 \,ds .
\end{aligned}
\]
Similarly, using \( w_n^\varepsilon = u_n^\varepsilon - u^\varepsilon \), we can rewrite the term \( I_2 \) as
\[
\begin{aligned}
I_2(t) &= -2\varepsilon\beta_4 \int_0^t \langle u_n^\varepsilon \times \Delta u_n^\varepsilon - u^\varepsilon \times \Delta u^\varepsilon,\; w_n^\varepsilon \rangle_H \,ds \\
&= -2\varepsilon\beta_4 \int_0^t \langle u_n^\varepsilon \times \Delta w_n^\varepsilon,\; w_n^\varepsilon \rangle_H \,ds
- 2\varepsilon\beta_4 \int_0^t \langle w_n^\varepsilon \times \Delta u^\varepsilon,\; w_n^\varepsilon \rangle_H \,ds .
\end{aligned}
\]

The second term vanishes because
\[
\langle w_n^\varepsilon \times \Delta u^\varepsilon,\; w_n^\varepsilon \rangle_H = 0
\quad \text{since } (a \times b) \cdot a = 0 .
\]

Thus, by the embedding \( H^1 \hookrightarrow L^\infty \) in 1D, we have \( \|u_n^\varepsilon\|_{L^\infty} \leq C \|u_n^\varepsilon\|_{H^1} \). Then Young's inequality yields
\[
|I_2(t)| \leq \frac{\varepsilon\beta_2}{4} \int_0^t \|\Delta w_n^\varepsilon\|_H^2 \,ds
+ C\varepsilon \frac{\beta_4^2}{\beta_2} \int_0^t \|u_n^\varepsilon\|_{H^1}^2 \|w_n^\varepsilon\|_H^2 \,ds .
\]
Also, \( I_3 \) can be rewritten as
\[
I_3(t) := 2\varepsilon\beta_5 \int_0^t
\Bigl\langle |u_n^\varepsilon|^2 u_n^\varepsilon - |u^\varepsilon|^2 u^\varepsilon,\;
\Delta w_n^\varepsilon \Bigr\rangle_H \,ds .
\]
Similar to \(I_2\), by H\"older's inequality and Young's inequality, we obtain
\[
\begin{aligned}
|I_3(t)|
&\leq
2\varepsilon\beta_5
\int_0^t
\Bigl\|
|u_n^\varepsilon|^2u_n^\varepsilon
-
|u^\varepsilon|^2u^\varepsilon
\Bigr\|_{H}
\|\Delta w_n^\varepsilon\|_{H}
\,ds
\\[4pt]
&\leq
\frac{\varepsilon\beta_2}{4}
\int_0^t
\|\Delta w_n^\varepsilon\|_{H}^2
\,ds
\\
&\quad
+
C\varepsilon
\frac{\beta_5^2}{\beta_2}
\int_0^t
\bigl(
\|u_n^\varepsilon\|_{H^1}^4
+
\|u^\varepsilon\|_{H^1}^4
\bigr)
\|w_n^\varepsilon\|_{H}^2
\,ds .
\end{aligned}
\]
Hence,
\[
\begin{aligned}
\|w_n^\varepsilon(t)\|_{H}^2 &\leq \|x_n - x\|_{H}^2 + \int_0^t \Bigg[ 2\varepsilon\beta_3 
+ C\varepsilon\beta_3 \|u_n^\varepsilon\|_{H^1}^2 \\
&\qquad + C\varepsilon\beta_3 \bigl( \|u_n^\varepsilon\|_{H^1} + \|u^\varepsilon\|_{H^1} \bigr) \|u^\varepsilon\|_{H^1}  + C\varepsilon \frac{\beta_4^2}{\beta_2} \|u_n^\varepsilon\|_{H^1}^2 \\
&\qquad + C\varepsilon \frac{\beta_5^2}{\beta_2} \bigl( \|u_n^\varepsilon\|_{H^1}^4 + \|u^\varepsilon\|_{H^1}^4 \bigr) \Bigg] \|w_n^\varepsilon\|_H^2 \,ds .
\end{aligned}
\]
For $M > 0$, define stopping times
\[
\vartheta_{\varepsilon,M}^1 = \inf \left\{ t \ge 0 : \int_0^t  \|u^\varepsilon(s)\|_{H^2}^2\, ds  \text{ or }\|u^\varepsilon(s)\|_{H^1}^2  > M \right\}
\]
and
\[
\vartheta_{\varepsilon,M}^2 = \inf \left\{ t \ge 0 : \|u_n^\varepsilon(t)\|_{H^1} > M \right\}.
\]
Let $\vartheta_{\varepsilon,M} = \vartheta_{\varepsilon,M}^1 \wedge \vartheta_{\varepsilon,M}^2$. Then Gronwall's inequality implies
\[
\sup_{0 \le t \le \vartheta_{\varepsilon,M}} \|w_n^\varepsilon(t)\|_{H}^2 \le \|x_n - x\|_{H}^2 \, \exp\bigl( C_{\varepsilon}^M \bigr),
\]
where
\[
C_{\varepsilon}^M := 2\varepsilon\beta_3  + 3C\varepsilon\beta_3 M^2 + C\varepsilon\frac{\beta_4^2}{\beta_2}M + C\varepsilon\frac{\beta_5^2}{\beta_2} M^4 .
\]
Now, taking expectation and using the definition $w_n^\varepsilon = u_n^\varepsilon - u^\varepsilon$, we obtain
\[
\mathbb{E} \left( \sup_{0 \le s \le \vartheta_{\varepsilon,M}} \|u^\varepsilon(s) - u_n^\varepsilon(s)\|_{H}^2 \right)
\leq \|x - x_n\|_{H}^2 \, \exp\bigl( C_{\varepsilon}^M \bigr).
\]

Consequently,
\[
\left( \mathbb{E} \left( \sup_{0 \le s \le \vartheta_{\varepsilon,M}} \|u^\varepsilon(s) - u_n^\varepsilon(s)\|_{H}^{2p} \right) \right)^{2/p}
\leq  \exp\bigl\{ 2C_{\varepsilon}^M \bigr\} \, \|x - x_n\|_{H}^{4}.
\]

\medskip
Taking \( p = \frac{2}{\varepsilon} \), it follows that for any \( M > 0 \),

\[
\sup_{0<\varepsilon\leq 1} \varepsilon \log \mathbb{P} \left( \sup_{0 \le s \le T \wedge \vartheta_{\varepsilon,M}} \|u^\varepsilon(s) - u_n^\varepsilon(s)\|_{H}^2 > \delta \right)
\]
\[
\leq \sup_{0<\varepsilon\leq 1} \varepsilon \log \left( \frac{ \mathbb{E} \left[ \sup_{0 \le s \le T \wedge \vartheta_{\varepsilon,M}} \|u^\varepsilon(s) - u_n^\varepsilon(s)\|_{H}^{2p} \right] }{ \delta^p } \right)
\]
\[
\leq \sup_{0<\varepsilon\leq 1} \varepsilon \left( \log \left( \exp\{ 2C_M \varepsilon T \} \|x - x_n\|_{H}^{4} \right) - \frac{1}{\varepsilon} \log \delta \right)
\]
\[
\to -\infty \quad \text{as } n \to \infty.
\]
For any \(K>0\), by Lemma~\ref{lemma1}, there exists \(M>0\) such that, for all \(0<\varepsilon\le1\) and all \(n\ge1\),
\[
\mathbb P\Bigl(
\|u^\varepsilon\|_{H^1}^{H^2}(T)>M
\Bigr)
\le
e^{-K/\varepsilon},
\qquad
\mathbb P\Bigl(
\|u_n^\varepsilon\|_{H^1}^{H^2}(T)>M
\Bigr)
\le
e^{-K/\varepsilon}.
\]
For such $M$, there exists a positive integer $N$, such that for any $n \ge N$,
\[
\sup_{0<\varepsilon\le 1} \varepsilon \log \mathbb{P} \left( \sup_{0\le t\le T} \|u^\varepsilon(t) - u_n^\varepsilon(t)\|_{H}^2 > \delta, \;
\|u^\varepsilon\|_{H^1}^{H^2}(T) \le M, \; \|u_n^\varepsilon\|_{H^1}^{H^2}(T) \le M \right)
\]
\[
\le \sup_{0<\varepsilon\le 1} \varepsilon \log \mathbb{P} \left( \sup_{0\le t\le T \wedge \vartheta_{\varepsilon,K}} \|u^\varepsilon(t) - u_n^\varepsilon(t)\|_{H}^2 > \delta \right) \le -K.
\]
Combining the above estimates, we obtain
\[
\begin{aligned}
&
\mathbb P\left(
\sup_{0\le t\le T}
\|u^\varepsilon(t)-u_n^\varepsilon(t)\|_{H}^{2}
>\delta
\right)
\\
&\le
\mathbb P\Bigl(
\|u^\varepsilon\|_{H^1}^{H^2}(T)>M
\Bigr)
+
\mathbb P\Bigl(
\|u_n^\varepsilon\|_{H^1}^{H^2}(T)>M
\Bigr)
\\
&\quad
+
\mathbb P\Bigl(
\sup_{0\le t\le T}
\|u^\varepsilon(t)-u_n^\varepsilon(t)\|_{H}^{2}
>\delta,
\
\|u^\varepsilon\|_{H^1}^{H^2}(T)\le M,
\
\|u_n^\varepsilon\|_{H^1}^{H^2}(T)\le M
\Bigr)
\\
&\le
3e^{-K/\varepsilon}.
\end{aligned}
\]
Since \(K\) is arbitrary, the proof is complete.
\end{proof}
\begin{lemma}{\label{lemma3}}
For any $\delta>0$,
\[
\lim_{n\to\infty}\sup_{0<\varepsilon\le1}
\varepsilon
\log
\mathbb P
\left(
\sup_{0\le t\le T}
\|u_n^\varepsilon(t)-v_n^\varepsilon(t)\|_{H^1}^2>2\delta
\right)
=-\infty .
\]
\end{lemma}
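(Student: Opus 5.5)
The plan is to compare $u_n^\varepsilon$ and $v_n^\varepsilon$ directly. Here $v_n^\varepsilon$ denotes the solution of \eqref{eq:Yeps} started from $x_n$, so the two processes share the initial value $x_n\in H^2$ and are driven by the same noise. Set $z_n^\varepsilon=u_n^\varepsilon-v_n^\varepsilon$. Then $z_n^\varepsilon(0)=0$ and
\[
z_n^\varepsilon(t)=\varepsilon\int_0^t \mathcal{A}(u_n^\varepsilon(s))\,ds-\frac{\varepsilon}{2}\sum_{j=1}^\infty\int_0^t G_j(u_n^\varepsilon)\times h_j\,ds-\sqrt{\varepsilon}\sum_{j=1}^\infty\int_0^t z_n^\varepsilon\times h_j\,dW_j(s),
\]
where $\mathcal{A}(u)=\beta_1\Delta u-\beta_2\Delta^2u+\beta_3(1-|u|^2)u-\beta_4u\times\Delta u+\beta_5\Delta(|u|^2u)$. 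The key structural point is that the diffusion coefficient of $z_n^\varepsilon$ is linear in $z_n^\varepsilon$; the additive parts $h_j-\Delta h_j$ cancel. Every remaining term carries a factor $\varepsilon$, and it will be estimated through the norms of $u_n^\varepsilon$ that Lemma~\ref{lemma1} controls uniformly in $n$ and $\varepsilon$.

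First, I apply It\^o's formula to $\|z_n^\varepsilon(t)\|_{H^1}^2$. In the $H$-part, the martingale term vanishes because $\langle z\times h_j,z\rangle_H=0$, and the It\^o correction is $\varepsilon\sum_j\|z\times h_j\|_H^2\le C_h\varepsilon\|z\|_H^2$. In the gradient part, the martingale reduces to $\sqrt{\varepsilon}\sum_j\langle z\times\nabla h_j,\nabla z\rangle_H$, whose quadratic variation is at most $C_h\varepsilon\|z\|_{H^1}^4$.

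Second, I treat the drift pairing $\varepsilon\langle\mathcal{A}(u_n^\varepsilon),z\rangle_{H^1}$. I integrate by parts once so that it becomes $\varepsilon\langle \nabla\mathcal A(u_n^\varepsilon),\nabla z\rangle$-type terms involving at most $\nabla\Delta u_n^\varepsilon$ and $\nabla z$. Using $H^1\hookrightarrow L^\infty$ and the identities \eqref{eq:aux1}--\eqref{eq:aux3}, this drift is bounded by
\[
\varepsilon\bigl(1+\|u_n^\varepsilon\|_{H^1}^6+\|u_n^\varepsilon\|_{H^3}^2\bigr)+\varepsilon C\|z\|_{H^1}^2 .
\]
The $H^3$ norm of $u_n^\varepsilon$ is integrable in time by the $H^1$ estimate inside the proof of Lemma~\ref{lemma1}, since $\varepsilon\beta_2\int_0^T\|\nabla\Delta u_n^\varepsilon\|^2_H\,ds\le M$ on the good event. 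I then introduce the stopping time $\tau_{n,M}^\varepsilon$ at which $\sup_{s\le t}\|u_n^\varepsilon(s)\|_{H^1}^2$ or $\varepsilon\int_0^t\|u_n^\varepsilon\|_{H^3}^2\,ds$ exceeds $M$. Up to $\tau_{n,M}^\varepsilon$, the deterministic forcing integrates to at most $\varepsilon C(1+M^3)T+C M$.

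Third, I take $p$-th moments up to $\tau_{n,M}^\varepsilon$ as in Lemma~\ref{lemma1}, using BDG for the linear martingale and then Gronwall. This should give
\[
\bigl[\mathbb E\sup_{t\le T\wedge\tau}\|z_n^\varepsilon\|_{H^1}^{2p}\bigr]^{1/p}\le C_M\,\varepsilon\,e^{C(1+p\varepsilon)T}.
\]
Choosing $p=1/\varepsilon$ and applying Chebyshev, I then combine with the tail bound of Lemma~\ref{lemma1} for $u_n^\varepsilon$, which is uniform in $n$ and in $\varepsilon\in(0,1]$, exactly as at the end of Lemma~\ref{lemma2}.

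The main obstacle is the $\sup_{0<\varepsilon\le1}$ together with the limit in $n$. The bound above gives
\[
\varepsilon\log\mathbb P(\cdots)\le\log\bigl(C_M\varepsilon/(2\delta)\bigr)+C,
\]
which tends to $-\infty$ only as $\varepsilon\to0$. For $\varepsilon$ of order one there is no mechanism making the probability small as $n\to\infty$, since the initial gap $z_n^\varepsilon(0)=0$ is already zero and $x_n$ only enters through the bounded term $\|x_n\|_{H^1}$. At that point I would first try to use $\|x_n\|_{H^2}$, possibly growing in $n$, to sharpen the forcing bound. If that fails, the estimate can only be proved with $\sup_{0<\varepsilon\le1}$ replaced by $\limsup_{\varepsilon\to0}$, uniformly in $n$. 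That weaker version is what the exponential-equivalence step \eqref{exp_equiv} actually needs, combined with Lemma~\ref{lemma2} and the analogous estimate for $v^\varepsilon-v_n^\varepsilon$.
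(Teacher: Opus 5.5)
Your doubt about the statement as printed is justified. With $v_n^\varepsilon$ in it, the statement is false in general. Take $h_j\equiv0$: then $v_n^\varepsilon\equiv x_n$ and $u_n^1$ is the deterministic LLBar flow from $x_n$. If $x$ is not stationary and $\delta$ is small, the event $\sup_t\|u_n^1(t)-x_n\|_{H^1}^2>2\delta$ is certain for all large $n$, so the $\sup_{0<\varepsilon\le1}$ never drops below $0$.

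However, the $v_n^\varepsilon$ is a misprint. The paper's proof works with $w_n^\varepsilon=u_n^\varepsilon-u^\varepsilon$, started from $x_n-x$, so the lemma is the $H^1$ upgrade of Lemma~\ref{lemma2}. That is exactly how it enters the proof of Theorem~\ref{main_theorem}, where $\sup_t\|u^\varepsilon-u_{N_0}^\varepsilon\|_{H^1}^2$ must be controlled uniformly in $0<\varepsilon\le1$.

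Your proposal ends without proving either version, and your fallback does not close \eqref{exp_equiv}. First, a $\limsup_{\varepsilon\to0}$ bound for $u_n^\varepsilon-v_n^\varepsilon$ that is uniform in $n$ is essentially the whole problem, since, as you note, the forcing depends on higher norms of $x_n$. The paper needs that comparison only for fixed $n$ (Lemma~\ref{lemma5}). Second, Lemma~\ref{lemma2} controls $u^\varepsilon-u_n^\varepsilon$ only in $H$, not in $H^1$. The missing ingredient is a uniform-in-$\varepsilon$ $H^1$ estimate for $u^\varepsilon-u_n^\varepsilon$ whose smallness comes from the initial gap $\|x_n-x\|_{H^1}\to0$. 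That is precisely the mechanism you correctly found to be absent for $u_n^\varepsilon-v_n^\varepsilon$.

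The paper obtains this estimate from It\^o's formula for $\frac12\|\nabla w_n^\varepsilon\|_H^2$. The dissipation $\varepsilon\beta_1\|\Delta w_n^\varepsilon\|_H^2+\varepsilon\beta_2\|\nabla\Delta w_n^\varepsilon\|_H^2$ absorbs the nonlinear differences. The identity $\langle u_n^\varepsilon\times\Delta w_n^\varepsilon,\Delta w_n^\varepsilon\rangle_H=0$ removes the worst gyromagnetic term, and the martingale has the linear integrand $-\langle\nabla(w_n^\varepsilon\times h_j),\nabla w_n^\varepsilon\rangle_H$. Stopping times from Lemma~\ref{lemma1}, BDG with $p=2/\varepsilon$, Gronwall, and Lemma~\ref{lemma2} for the $L^2$ part then finish the argument.

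For the limit in $n$ to be uniform in $\varepsilon$, keep every term proportional to $\|w_n^\varepsilon\|_{H^1}^2$. For example, $\varepsilon\beta_4\|w_n^\varepsilon\|_{L^\infty}\|\Delta u^\varepsilon\|_H\|\Delta w_n^\varepsilon\|_H\le\frac{\varepsilon\beta_1}{4}\|\Delta w_n^\varepsilon\|_H^2+C\varepsilon\|u^\varepsilon\|_{H^2}^2\|w_n^\varepsilon\|_{H^1}^2$, whose Gronwall weight $\varepsilon\int_0^T\|u^\varepsilon\|_{H^2}^2\,ds$ is controlled by Lemma~\ref{lemma1}. Bounding $\|w_n^\varepsilon\|$ by $\|u_n^\varepsilon\|_{H^1}+\|u^\varepsilon\|_{H^1}$, as some displayed estimates in the paper do, leaves an $n$-independent forcing.

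Separately, your own sketch for $z_n^\varepsilon$ would not give even the weakened claim. The term $\varepsilon\beta_2\langle\nabla\Delta^2u_n^\varepsilon,\nabla z_n^\varepsilon\rangle_H$ carries six derivatives and cannot be bounded through $\|u_n^\varepsilon\|_{H^3}$ and $\|z_n^\varepsilon\|_{H^1}$ alone. You must split $u_n^\varepsilon=z_n^\varepsilon+v_n^\varepsilon$ to produce $-\varepsilon\beta_2\|\nabla\Delta z_n^\varepsilon\|_H^2$, and then pay with regularity of $v_n^\varepsilon$. Also, your forcing bound $\varepsilon C(1+M^3)T+CM$ contains an $O(1)$ term, so the moment bound $C_M\varepsilon$ does not follow from it.
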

\begin{proof}
Applying It\^o's formula
\(
\frac12\|\nabla w_n^\varepsilon(t)\|_{H}^2,
\)
we obtain
\begin{equation}
\begin{aligned}
\frac12\|\nabla w_n^\varepsilon(t)\|_{H}^2
&+
\varepsilon\beta_1
\int_0^t
\|\Delta w_n^\varepsilon(s)\|_{H}^2\,ds
+
\varepsilon\beta_2
\int_0^t
\|\nabla\Delta w_n^\varepsilon(s)\|_{H}^2\,ds
\\
&=
\frac12\|\nabla(x_n-x)\|_{H}^2+
\varepsilon\beta_3
\int_0^t
\Bigl\langle
\nabla\Bigl[
(1-|u_n^\varepsilon|^2)u_n^\varepsilon
-
(1-|u^\varepsilon|^2)u^\varepsilon
\Bigr],
\nabla w_n^\varepsilon
\Bigr\rangle_H
\,ds
\\
&\quad
-
\varepsilon\beta_4
\int_0^t
\Bigl\langle
\nabla\Bigl[
u_n^\varepsilon\times\Delta u_n^\varepsilon
-
u^\varepsilon\times\Delta u^\varepsilon
\Bigr],
\nabla w_n^\varepsilon
\Bigr\rangle_H
\,ds+\varepsilon\beta_5
\int_0^t
\Bigl\langle
\nabla\Delta\Bigl(
|u_n^\varepsilon|^2u_n^\varepsilon
-
|u^\varepsilon|^2u^\varepsilon
\Bigr),
\nabla w_n^\varepsilon
\Bigr\rangle_H
\,ds
\\
&\quad
+
\frac{\varepsilon}{2}
\sum_{j=1}^{\infty}
\int_0^t
\|\nabla(w_n^\varepsilon\times h_j)\|_{H}^2
\,ds+\varepsilon
\sum_{j=1}^{\infty}
\int_0^t
\Bigl\langle
\nabla\bigl((w_n^\varepsilon\times h_j)\times h_j\bigr),
\nabla w_n^\varepsilon
\Bigr\rangle_H
\,ds
\\
&\quad
-
\sqrt{\varepsilon}
\sum_{j=1}^{\infty}
\int_0^t
\Bigl\langle
\nabla(w_n^\varepsilon\times h_j),
\nabla w_n^\varepsilon
\Bigr\rangle_H
\,dW_j(s)\\
&\quad
:=\frac{1}{2}\|\nabla(x_n-x)\|_{H}+\sum_{i=1}^6 \frac{1}{2}L_i
\end{aligned}
\end{equation}
Define
\[
\begin{aligned}
L_1(t)
:={}&
\varepsilon\beta_3
\int_0^t
\Bigl\langle
\nabla\Bigl[
(1-|u_n^\varepsilon|^2)u_n^\varepsilon
-
(1-|u^\varepsilon|^2)u^\varepsilon
\Bigr],
\nabla w_n^\varepsilon
\Bigr\rangle_H
\,ds .
\end{aligned}
\]

Then
\[
\begin{aligned}
L_1(t)
={}&
\varepsilon\beta_3
\int_0^t
\|\nabla w_n^\varepsilon\|_H^2
\,ds-\varepsilon\beta_3
\int_0^t
\Bigl\langle
\nabla\Bigl(
|u_n^\varepsilon|^2u_n^\varepsilon-|u^\varepsilon|^2u^\varepsilon
\Bigr),
\nabla w_n^\varepsilon
\Bigr\rangle_H
\,ds .
\end{aligned}
\]
Since,
\[
|u_n^\varepsilon|^2u_n^\varepsilon
-
|u^\varepsilon|^2u^\varepsilon
=|u_n^\varepsilon|^2w_n^\varepsilon+\Bigl(|u_n^\varepsilon|^2-|u^\varepsilon|^2\Bigr)u^\varepsilon .
\]
Hence,
\[
\begin{aligned}
L_1(t) = \varepsilon\beta_3 \int_0^t \langle \nabla w_n^\varepsilon, \nabla w_n^\varepsilon \rangle_H \, ds
&- \varepsilon\beta_3 \int_0^t \Bigl\langle \nabla\bigl( |u_n^\varepsilon|^2 w_n^\varepsilon \bigr), \nabla w_n^\varepsilon \Bigr\rangle_H \, ds \\
&\quad - \varepsilon\beta_3 \int_0^t \Bigl\langle \nabla\Bigl( \bigl( |u^\varepsilon|^2 - |u_n^\varepsilon|^2 \bigr) u^\varepsilon \Bigr), \nabla w_n^\varepsilon \Bigr\rangle_H \, ds \\
&=: \varepsilon\beta_3 \int_0^t \|\nabla w_n^\varepsilon\|_H^2 \, ds + L_{1,1}(t) + L_{1,2}(t).
\end{aligned}
\]

Moreover,
\[
\begin{aligned}
|L_{1,1}(t)|
&= \varepsilon\beta_3 \left| \int_0^t \Bigl\langle |u_n^\varepsilon|^2 w_n^\varepsilon, \Delta w_n^\varepsilon \Bigr\rangle_H \, ds \right| \\
&\le \varepsilon\beta_3 \int_0^t \bigl\| |u_n^\varepsilon|^2 w_n^\varepsilon \bigr\|_{H} \, \|\Delta w_n^\varepsilon\|_{H} \, ds .
\end{aligned}
\]
Since
\(
H^1(\mathcal D)\hookrightarrow L^\infty(\mathcal D),
\)
we deduce that
\[
\begin{aligned}
|L_{1,1}(t)|
&\le
\frac{\varepsilon\beta_1}{4}
\int_0^t
\|\Delta w_n^\varepsilon\|_{H}^{2}
\,ds+\frac{C^{2}\varepsilon\beta_{3}^{2}}{\beta_{1}}
\int_0^t
\|u_n^\varepsilon\|_{H^1}^{4}
\|w_n^\varepsilon\|_{H}^{2}
\,ds .
\end{aligned}
\]
 We estimate \(L_{1,2}(t)\) as follows:
\[
\begin{aligned}
|L_{1,2}(t)|
&\le \frac{\varepsilon\beta_1}{4} \int_0^t \|\Delta w_n^\varepsilon\|_{H}^{2} \,ds + \frac{C^{2}\varepsilon\beta_{3}^{2}}{\beta_{1}} \int_0^t \|w_n^\varepsilon\|_{H}^{2} \Bigl( \|u^\varepsilon\|_{H^1}^{2} + \|u^\varepsilon\|_{H^1} \|u_n^\varepsilon\|_{H^1} \Bigr)^{2} \,ds .
\end{aligned}
\]
Consider
\[
\begin{aligned}
L_3(t)
&= \varepsilon\beta_5 \int_0^t \Bigl\langle \nabla\Bigl( |u_n^\varepsilon|^2 u_n^\varepsilon - |u^\varepsilon|^2 u^\varepsilon \Bigr), \nabla\Delta w_n^\varepsilon \Bigr\rangle_H \,ds \\
&= \varepsilon\beta_5 \int_0^t \Bigl\langle \nabla\bigl( |u_n^\varepsilon|^2 w_n^\varepsilon \bigr), \nabla\Delta w_n^\varepsilon \Bigr\rangle_H \,ds \\
&\quad + \varepsilon\beta_5 \int_0^t \Bigl\langle \nabla\Bigl( \bigl( |u_n^\varepsilon|^2 - |u^\varepsilon|^2 \bigr) u^\varepsilon \Bigr), \nabla\Delta w_n^\varepsilon \Bigr\rangle_H \,ds \\
&=: L_{3,1}(t) + L_{3,2}(t).
\end{aligned}
\]
We first estimate \(L_{3,1}(t)\). Using H\"older's inequality, Young's inequality, and the Sobolev embedding
\(H^1(\mathcal D)\hookrightarrow L^\infty(\mathcal D)\),
we obtain
\[
\begin{aligned}
|L_{3,1}(t)|
&\le \frac{\varepsilon\beta_2}{4} \int_0^t \|\nabla\Delta w_n^\varepsilon\|_{H}^{2} \,ds \\
&\quad + \frac{C^{2}\varepsilon\beta_{5}^{2}}{\beta_{2}} \int_0^t \|u_n^\varepsilon\|_{H^1}^{4} \|\nabla w_n^\varepsilon\|_{H}^{2} \,ds \\
&\quad + \frac{C^{2}\varepsilon\beta_{5}^{2}}{\beta_{2}} \int_0^t \|u_n^\varepsilon\|_{H^1}^{2} \bigl( \|u_n^\varepsilon\|_{H^1} + \|u^\varepsilon\|_{H^1} \bigr)^{2} \,ds .
\end{aligned}
\]
For the stochastic correction terms, by H\"older's inequality, Young's inequality, and the assumption
\(
\sum_{j=1}^\infty \|h_j\|_{H^3}<C_h,
\)
we obtain
\[
\begin{aligned}
&\frac{\varepsilon}{2} \sum_{j=1}^{\infty} \int_0^t \|\nabla( w_n^\varepsilon \times h_j )\|_{H}^{2} \,ds \\
&\qquad + \varepsilon \sum_{j=1}^{\infty} \int_0^t \Bigl\langle \nabla\bigl( (w_n^\varepsilon \times h_j) \times h_j \bigr), \nabla w_n^\varepsilon \Bigr\rangle_H \,ds \\
&\le \varepsilon C_h \int_0^t \|\nabla w_n^\varepsilon\|_{H}^{2} \,ds + \varepsilon C_h \int_0^t \bigl( \|u_n^\varepsilon\|_{H^1}^{2} + \|u^\varepsilon\|_{H^1}^{2} \bigr) \,ds .
\end{aligned}
\]

From the term \(L_2\), we obtain
\[
\begin{aligned}
L_2(t)
:=& -\varepsilon\beta_4 \int_0^t \Bigl\langle \nabla\Bigl[ u_n^\varepsilon \times \Delta u_n^\varepsilon - u^\varepsilon \times \Delta u^\varepsilon \Bigr], \nabla w_n^\varepsilon \Bigr\rangle_H \,ds \\
=& -\varepsilon\beta_4 \int_0^t \Bigl\langle u_n^\varepsilon \times \Delta w_n^\varepsilon, \Delta w_n^\varepsilon \Bigr\rangle_H \,ds \\
&\quad - \varepsilon\beta_4 \int_0^t \Bigl\langle w_n^\varepsilon \times \Delta u^\varepsilon, \Delta w_n^\varepsilon \Bigr\rangle_H \,ds \\
=:& L_{2,1}(t) + L_{2,2}(t).
\end{aligned}
\]
Using the identity
\[
(a \times b) \cdot b = 0,
\]
we deduce that
\[
L_{2,1}(t) = 0.
\]

Moreover, by H\"older's inequality, Young's inequality, and the Sobolev embedding
\( H^1(\mathcal D) \hookrightarrow L^\infty(\mathcal D) \),
we obtain
\[
\begin{aligned}
|L_{2,2}(t)|
&\le \varepsilon\beta_4 \int_0^t \| w_n^\varepsilon \times \Delta u^\varepsilon \|_{H} \, \| \Delta w_n^\varepsilon \|_{H} \,ds \\
&\le \varepsilon\beta_4 \int_0^t \| w_n^\varepsilon \|_{L^\infty} \| \Delta u^\varepsilon \|_{H} \, \| \Delta w_n^\varepsilon \|_{H} \,ds \\
&\le \frac{\varepsilon\beta_1}{4} \int_0^t \| \Delta w_n^\varepsilon \|_{H}^{2} \,ds + \frac{C^{2}\varepsilon\beta_{4}^{2}}{\beta_{1}} \int_0^t \bigl(\| u_n^\varepsilon \|_{H^1}^{2}+\|u^\varepsilon\|_{H^1}^2\bigr) \| u^\varepsilon \|_{H^2}^{2} \,ds .
\end{aligned}
\]

Taking all the above estimates together, and using
\[
\|w_n^\varepsilon\|_{H} \le \|u_n^\varepsilon\|_{H} + \|u^\varepsilon\|_{H} \le \|u_n^\varepsilon\|_{H^1} + \|u^\varepsilon\|_{H^1},
\]
we obtain
\[
\begin{aligned}
\frac12 \|\nabla w_n^\varepsilon(t)\|_{H}^{2}
&\le \frac12 \|\nabla(x_n - x)\|_{H}^{2} \\
&\quad + \int_0^t \Biggl( \varepsilon\beta_3 + \varepsilon C_h + \frac{C^{2}\varepsilon\beta_{5}^{2}}{\beta_{2}} \|u_n^\varepsilon(s)\|_{H^1}^{4} \Biggr) \|\nabla w_n^\varepsilon(s)\|_{H}^{2} \,ds \\
&\quad + \frac{C^{2}\varepsilon\beta_{3}^{2}}{\beta_{1}} \int_0^t \Bigl( \|u_n^\varepsilon(s)\|_{H^1} + \|u^\varepsilon(s)\|_{H^1} \Bigr)^{2} \Bigl( \|u^\varepsilon(s)\|_{H^1}^{2} + \|u^\varepsilon(s)\|_{H^1} \|u_n^\varepsilon(s)\|_{H^1} \Bigr)^{2} \,ds \\
&\quad + \frac{C^{2}\varepsilon\beta_{3}^{2}}{\beta_{1}} \int_0^t \|u_n^\varepsilon(s)\|_{H^1}^{4} \Bigl( \|u_n^\varepsilon(s)\|_{H^1} + \|u^\varepsilon(s)\|_{H^1} \Bigr)^{2} \,ds + \varepsilon C_h \int_0^t \Bigl( \|u_n^\varepsilon(s)\|_{H^1}^{2} + \|u^\varepsilon(s)\|_{H^1}^{2} \Bigr) \,ds \\
&\quad + \frac{C^{2}\varepsilon\beta_{5}^{2}}{\beta_{2}} \int_0^t \|u_n^\varepsilon(s)\|_{H^1}^{2} \Bigl( \|u_n^\varepsilon(s)\|_{H^1} + \|u^\varepsilon(s)\|_{H^1} \Bigr)^{2} \,ds \\
&\quad - \sqrt{\varepsilon} \sum_{j=1}^{\infty} \int_0^t \Bigl\langle \nabla( w_n^\varepsilon(s) \times h_j ), \nabla w_n^\varepsilon(s) \Bigr\rangle_H \,dW_j(s).
\end{aligned}
\]

For $M > 0$, define stopping times
\[
\vartheta_{\varepsilon,M}^1 = \inf \left\{ t \ge 0 : \int_0^t \|u^\varepsilon(s)\|_{H^2}^2 >M,\,\text{or } \|u^\varepsilon(s)\|_{H^1} \,ds > M \right\}
\]
and
\[
\vartheta_{\varepsilon,M}^2 = \inf \left\{ t \ge 0 : \|u_n^\varepsilon(t)\|_{H^1} > M \right\}.
\]
Let $\vartheta_{\varepsilon,M} = \vartheta_{\varepsilon,M}^1 \wedge \vartheta_{\varepsilon,M}^2$. Then Gronwall's inequality yields
\[
\begin{aligned}
\sup_{0 \le t \le T \wedge \vartheta_{\varepsilon,M}^{\varepsilon}} \|\nabla w_n^\varepsilon(t)\|_{H}^{2}
&\le \Biggl[ \|\nabla(x_n - x)\|_{H}^{2} + \varepsilon C_{\beta_1,\beta_3} T M^{6} + \varepsilon C_h T M^{2} + \varepsilon C_{\beta_2,\beta_5} T M^{3} + \sup_{0 \le t \le T \wedge \vartheta_{\varepsilon,M}^{\varepsilon}} |L_6(t)| \Biggr] \\
&\qquad \cdot \exp\Bigl\{ \bigl( \varepsilon \beta_3 + \varepsilon C_h + \varepsilon C_{\beta_2,\beta_5} M^{4} \bigr)T \Bigr\} \\
&=: \Biggl[ \|\nabla(x_n - x)\|_{H}^{2} + \varepsilon C_{h,\beta_1,\beta_2,\beta_3,\beta_5}^{M,T} + \sup_{0 \le t \le T \wedge \vartheta_{\varepsilon,M}^{\varepsilon}} |L_6(t)| \Biggr]\exp\bigl\{ C_{M,h,\beta_2,\beta_3,\beta_5}^{\varepsilon} T \bigr\}.
\end{aligned}
\]
We define the constant \( C_{h,\beta}^{M,T} := C_{h,\beta_1,\beta_2,\beta_3,\beta_5}^{M,T} \), and the constant \( C_{M,h}^{\varepsilon,\beta} := C_{M,h,\beta_2,\beta_3,\beta_5}^{\varepsilon}T \) for the simplification of the notations.

Now consider
\[
L_6(t)
:=
- \sqrt{\varepsilon}
\sum_{j=1}^{\infty}
\int_0^t
\Bigl\langle
\nabla( w_n^\varepsilon(s) \times h_j ),
\nabla w_n^\varepsilon(s)
\Bigr\rangle_H
\,dW_j(s).
\]
Moreover,
\[
\begin{aligned}
&
\sum_{j=1}^{\infty}
\int_0^{T \wedge \vartheta_{\varepsilon,M}^{\varepsilon}}
\Bigl|
\Bigl\langle
\nabla( w_n^\varepsilon(s) \times h_j ),
\nabla w_n^\varepsilon(s)
\Bigr\rangle_H
\Bigr|^{2}
\,ds
\\
&\le
C_{h,M}
\int_0^{T \wedge \vartheta_{\varepsilon,M}^{\varepsilon}}
\|\nabla w_n^\varepsilon(s)\|_{H}^{2}
\,ds .
\end{aligned}
\]
By the Burkholder--Davis--Gundy inequality, we get
\[
\begin{aligned}
\mathbb{E}
\Biggl[
\sup_{0 \le t \le T \wedge \vartheta_{\varepsilon,M}^{\varepsilon}}
|L_6(t)|^p
\Biggr]
&\le
C_p \, \varepsilon^{p/2}
\,
\mathbb{E}
\Biggl[
\Biggl(
\sum_{j=1}^{\infty}
\int_0^{T \wedge \vartheta_{\varepsilon,M}^{\varepsilon}}
\Bigl|
\Bigl\langle
\nabla( w_n^\varepsilon(s) \times h_j ),
\nabla w_n^\varepsilon(s)
\Bigr\rangle_H
\Bigr|^2
\, ds
\Biggr)^{p/2}
\Biggr].
\end{aligned}
\]
Consequently, we can write
\[
\begin{aligned}
& \Biggl( \mathbb{E} \Bigl( \sup_{0 \le t \le T \wedge \vartheta_{\varepsilon,M}^{\varepsilon}} \|\nabla w_n^\varepsilon(t)\|_{H}^{2p} \Bigr) \Biggr)^{\frac{2}{p}} \\
&\qquad \le 3 \exp\bigl\{ 2 C_{M,h}^{\varepsilon,\beta} T \bigr\} \|\nabla(x_n - x)\|_{H}^{4} + 3 \bigl( C_{h,\beta}^{M,T} \bigr)^{2} \exp\bigl\{ 2 C_{M,h}^{\varepsilon,\beta} T \bigr\} \\
&\qquad \quad + 3 \exp\bigl\{ 2 C_{M,h}^{\varepsilon,\beta} T \bigr\} C_{h,M}^{2} p \varepsilon \int_0^{T \wedge \vartheta_{\varepsilon,M}^{\varepsilon}} \Biggl( \mathbb{E} \bigl[ \|\nabla w_n^\varepsilon(s)\|_{H}^{2p} \bigr] \Biggr)^{\frac{2}{p}} \,ds .
\end{aligned}
\]
Applying Gronwall's inequality, we obtain
\[
\begin{aligned}
\Biggl( \mathbb{E} \Bigl( \sup_{0 \le t \le T \wedge \vartheta_{\varepsilon,M}^{\varepsilon}} \|\nabla w_n^\varepsilon(t)\|_{H}^{2p} \Bigr) \Biggr)^{\frac{2}{p}} & \le \Biggl[ 3 \exp\bigl\{ 2 C_{M,h}^{\varepsilon,\beta} T \bigr\} \|\nabla(x_n - x)\|_{H}^{4}+ 3 \bigl( C_{h,\beta}^{M,T} \bigr)^{2} \exp\bigl\{ 2 C_{M,h}^{\varepsilon,\beta} T \bigr\} \Biggr] \\
&\qquad \qquad \times \exp\Biggl\{ 3 \exp\bigl\{ 2 C_{M,h}^{\varepsilon,\beta} T \bigr\} C_{h,M}^{2} p \varepsilon T \Biggr\}.
\end{aligned}
\]
Taking \(p = \frac{2}{\varepsilon}\), and proceeding exactly as in the proof of Lemma~\ref{lemma2}, we obtain
\[
\sup_{0<\varepsilon\le1} \varepsilon \log \mathbb{P} \Biggl( \sup_{0\le s\le T\wedge \vartheta_{\varepsilon,M}^{\varepsilon}} \|\nabla w_n^\varepsilon(s)\|_{H}^2 > \delta \Biggr) \longrightarrow -\infty, \qquad \text{as } n \to \infty .
\]
For any \(R > 0\), Lemma~\ref{lemma1} implies that there exists \(M > 0\) such that, for any \(0 < \varepsilon \le 1\) and \(n \ge 1\),
\[
\mathbb{P} \Bigl( \|u^\varepsilon\|_{H^1}^{H^2}(T) > M \Bigr) \le e^{-K/\varepsilon}, \qquad
\mathbb{P} \Bigl( \|u_n^\varepsilon\|_{H^1}^{H^2}(T) > M \Bigr) \le e^{-K/\varepsilon}.
\]

Hence, for such \(M\), there exists a positive integer \(N\) such that, for all \(n \ge N\),
\[
\begin{aligned}
& \sup_{0<\varepsilon\le1} \varepsilon \log \mathbb{P} \Biggl( \sup_{0\le t\le T} \|\nabla w_n^\varepsilon(t)\|_{H}^{2} > \delta, \;
\|u^\varepsilon\|_{H^1}^{H^2}(T)\le M, \;
\|u_n^\varepsilon\|_{H^1}^{H^2}(T) \le M \Biggr) \\
&\le \sup_{0<\varepsilon\le1} \varepsilon \log \mathbb{P} \Biggl( \sup_{0\le t\le T\wedge \vartheta_{\varepsilon,M}^{\varepsilon}} \|\nabla w_n^\varepsilon(t)\|_{H}^{2} > \delta \Biggr) \le -K .
\end{aligned}
\]

Combining the above estimates, we deduce that
\[
\begin{aligned}
& \mathbb{P} \Biggl( \sup_{0\le t\le T} \|\nabla w_n^\varepsilon(t)\|_{H}^{2} > \delta \Biggr) \\
&\le \mathbb{P} \Bigl( \|u^\varepsilon\|_{H^1}^{H^2}(T) > M \Bigr) + \mathbb{P} \Bigl( \|u_n^\varepsilon\|_{H^1}^{H^2}(T) > M \Bigr) \\
&\quad + \mathbb{P} \Biggl( \sup_{0\le t\le T} \|\nabla w_n^\varepsilon(t)\|_{H}^{2} > \delta, \;
\|u^\varepsilon\|_{H^1}^{H^2}(T) \le M, \;
\|u_n^\varepsilon\|_{H^1}^{H^2}(T) \le M \Biggr) \\
&\le 3 e^{-K/\varepsilon}.
\end{aligned}
\]
Since
\[
\mathbb{P}\!\left(
\sup_{0\le t\le T}\|\nabla w_n^\varepsilon(t)\|_{H}^{2}
>\delta
\right)
\le 3e^{-K/\varepsilon},
\]
and, by Lemma~\ref{lemma2},
\[
\mathbb{P}\!\left(
\sup_{0\le t\le T}\|w_n^\varepsilon(t)\|_{H}^{2}
>\delta
\right)
\le 3e^{-K/\varepsilon},
\]
it follows from the definition of the \(H^{1}\)-norm and the union bound that
\begin{align*}
\mathbb{P}\!\left(
\sup_{0\le t\le T}\|w_n^\varepsilon(t)\|_{H^{1}}^{2}
>2\delta
\right)
&=
\mathbb{P}\!\left(
\sup_{0\le t\le T}
\bigl(
\|w_n^\varepsilon(t)\|_{H}^{2}+\|\nabla w_n^\varepsilon(t)\|_{H}^{2}\bigr)>2\delta\right) \\
&\le
\mathbb{P}\!\left(
\sup_{0\le t\le T}\|w_n^\varepsilon(t)\|_{H}^{2}
>\delta\right)
+\mathbb{P}\!\left(
\sup_{0\le t\le T}\|\nabla w_n^\varepsilon(t)\|_{H}^{2}
>\delta\right) \\
&\le 6e^{-K/\varepsilon}.
\end{align*}
Since \(K>0\) is arbitrary, the proof is complete.

\end{proof}

\begin{lemma}{\label{lemma4}}
   For any fixed \(n\in\mathbb{N}\) and any \(\delta > 0\), we have
\[
\lim_{\varepsilon \to 0} \,
\varepsilon \log \mathbb{P} \left( 
\sup_{0 \le t \le T} 
\|u_n^\varepsilon(t) - v_n^\varepsilon(t)\|_{H}^2 > \delta 
\right) = -\infty.
\]
\end{lemma}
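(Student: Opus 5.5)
Set $z^\varepsilon_n := u_n^\varepsilon - v_n^\varepsilon$, where $v_n^\varepsilon$ solves \eqref{eq:Yeps} with initial value $x_n\in H^2$. Since both processes start from $x_n$, we have $z_n^\varepsilon(0)=0$. Subtracting \eqref{eq:Yeps} from \eqref{eq:ito_LLBar} and using that $G_j$ is affine with $G_j(u)-G_j(v) = -(u-v)\times h_j$, we get
\[
z_n^\varepsilon(t)=\varepsilon\int_0^t A(u_n^\varepsilon(s))\,ds - \sqrt{\varepsilon}\sum_{j=1}^\infty\int_0^t z_n^\varepsilon(s)\times h_j\,dW_j(s).
\]
Here $A(u)$ collects the drift terms $\beta_1\Delta u-\beta_2\Delta^2 u+\beta_3(1-|u|^2)u-\beta_4 u\times\Delta u+\beta_5\Delta(|u|^2u)-\frac12\sum_j G_j(u)\times h_j$.

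The key structural point is that the drift carries an explicit factor $\varepsilon$, while the noise acting on $z_n^\varepsilon$ is linear and skew. We apply It\^o's formula to $\|z_n^\varepsilon\|_H^2$. Because $\langle z\times h_j,z\rangle_H=0$, the martingale term vanishes identically. The It\^o correction $\varepsilon\sum_j\|z\times h_j\|_H^2$ is bounded by $\varepsilon C_h\|z\|_H^2$. We are left with
\[
\|z_n^\varepsilon(t)\|_H^2\le 2\varepsilon\int_0^t |\langle A(u_n^\varepsilon),z_n^\varepsilon\rangle_H|\,ds+\varepsilon C_h\int_0^t\|z_n^\varepsilon\|_H^2\,ds.
\]

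We do not move derivatives onto $z$ here, since $v_n^\varepsilon$ has no smoothing. Instead we bound $\langle\Delta^2 u,z\rangle$ and $\langle\Delta(|u|^2u),z\rangle$ by integrating by parts once, which gives $\|\nabla\Delta u\|_H\|\nabla z\|_H$, or simply by $\|u\|_{H^4}\|z\|_H$ when applied to a regularized $u_n^\varepsilon$. The cleaner route is the crude bound $|\langle A(u),z\rangle|\le \frac12\|A(u)\|_{H^{-1}}^2+\frac12\|z\|_{H^1}^2$. The $H^{-1}$ norm of $A(u_n^\varepsilon)$ is controlled by $C(1+\|u_n^\varepsilon\|_{H^3}^2+\|u_n^\varepsilon\|_{H^1}^{6})$, and $\|z\|_{H^1}\le\|u_n^\varepsilon\|_{H^1}+\|v_n^\varepsilon\|_{H^1}$.

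Next we localize with a stopping time $\tau_M$. We stop when $\sup_t\|u_n^\varepsilon\|_{H^1}^2$, or $\varepsilon\int_0^t\|u_n^\varepsilon\|_{H^3}^2ds$, or $\sup_t\|v_n^\varepsilon\|_{H^1}^2$ exceeds $M$. The $\varepsilon\int\|u\|_{H^3}^2$ term is exactly the dissipation controlled in the proof of Lemma~\ref{lemma1}, in the functional $\|u^\varepsilon\|_{H^1,H^2}^{H^3}$; the analogous bound for $v_n^\varepsilon$ follows from a standard BDG argument with $p=1/\varepsilon$ as in Lemma~\ref{lemma1}, using \eqref{eq:hj_condition}. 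On $[0,\tau_M]$, Gronwall then gives the deterministic bound
\[
\sup_{t\le T\wedge\tau_M}\|z_n^\varepsilon(t)\|_H^2\le C_M\,\varepsilon\, e^{C_h\varepsilon T}.
\]
For $\varepsilon$ small this is $<\delta$, so $\mathbb P(\sup_{t\le T\wedge\tau_M}\|z_n^\varepsilon\|_H^2>\delta)=0$. Finally we split as in Lemmas~\ref{lemma2}–\ref{lemma3}:
\[
\mathbb P\Bigl(\sup_{t\le T}\|z_n^\varepsilon\|_H^2>\delta\Bigr)\le \mathbb P(\tau_M<T)+0.
\]
Given $K$, we choose $M$ via Lemma~\ref{lemma1} (for $u_n^\varepsilon$) and its analogue for $v_n^\varepsilon$ so that $\varepsilon\log\mathbb P(\tau_M<T)\le -K$. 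Letting $\varepsilon\to0$ and then $K\to\infty$ finishes the proof.

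The main obstacle is the exponential tail estimate for the higher-order quantity $\varepsilon\int_0^T\|u_n^\varepsilon\|_{H^3}^2ds$. Lemma~\ref{lemma1} gives this only with the $\varepsilon$ weight, and that is exactly what is needed here, since the drift carries the factor $\varepsilon$ and only $\varepsilon\int\|u\|_{H^3}^2$ appears after Cauchy–Schwarz. If the $H^{-1}$ bound on $\Delta(|u|^2u)$ requires more, I would fall back to pairing against $z$ through $\|u\|_{L^\infty}^2\|\nabla u\|_H\|\nabla z\|_H$, which involves only $H^1$ quantities. The uniform-in-$\varepsilon$ tail bound for $\sup_t\|v_n^\varepsilon\|_{H^1}$ is routine because its noise is Lipschitz in $H^1$.
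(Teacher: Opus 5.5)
\textbf{The step that fails.} The problem is your deterministic Gronwall bound $\sup_{t\le T\wedge\tau_M}\|z_n^\varepsilon(t)\|_H^2\le C_M\,\varepsilon\,e^{C_h\varepsilon T}$.

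\begin{itemize}
\item You use the balanced Young inequality $|\langle A(u),z\rangle_H|\le\frac12\|A(u)\|_{H^{-1}}^2+\frac12\|z\|_{H^1}^2$. With your bound on $\|A(u)\|_{H^{-1}}^2$, the drift contribution then contains $C\varepsilon\int_0^{t\wedge\tau_M}\|u_n^\varepsilon\|_{H^3}^2\,ds$.
\item Before $\tau_M$, all you know about this term is $\varepsilon\int_0^{t}\|u_n^\varepsilon\|_{H^3}^2\,ds\le M$; Lemma~\ref{lemma1} controls nothing better. So the term is $O(M)$, not $O(\varepsilon M)$. The single factor $\varepsilon$ carried by the drift is exactly the weight your localization already consumes, and your closing remark counts that $\varepsilon$ twice.
\item Gronwall therefore only gives $(CM+C_M\varepsilon)\,e^{C_h\varepsilon T}$, which does not fall below $\delta$. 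The claim $\mathbb P(\sup_{t\le T\wedge\tau_M}\|z_n^\varepsilon\|_H^2>\delta)=0$ is unjustified.
\end{itemize}
The other contributions, $\varepsilon\int(1+\|u_n^\varepsilon\|_{H^1}^6)\,ds$ and $\varepsilon\int\|z_n^\varepsilon\|_{H^1}^2\,ds$, are genuinely $O(\varepsilon)$.

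\textbf{The repair.} Do not square the top-order term. Since $\|z_n^\varepsilon\|_{H^1}\le 2\sqrt M$ before $\tau_M$, Cauchy--Schwarz in time gives
\[
2\varepsilon\int_0^{T\wedge\tau_M}\|\nabla\Delta u_n^\varepsilon\|_H\,\|\nabla z_n^\varepsilon\|_H\,ds\le 4\sqrt{M}\,\sqrt{\varepsilon T}\,\Bigl(\varepsilon\int_0^{T\wedge\tau_M}\|\nabla\Delta u_n^\varepsilon\|_H^2\,ds\Bigr)^{1/2}\le CM\sqrt{\varepsilon T}.
\]
Equivalently, use $2\varepsilon ab\le \varepsilon^{3/2}a^2+\varepsilon^{1/2}b^2$. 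The deterministic bound becomes $C_{M,T}\sqrt{\varepsilon}\,e^{C_h\varepsilon T}\to0$, and the rest of your argument goes through unchanged.

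\textbf{Comparison with the paper.} Once repaired, your route genuinely differs from the paper's.
\begin{itemize}
\item The paper writes $\Delta u_n^\varepsilon=\Delta d_n+\Delta v_n^\varepsilon$ and $\Delta^2u_n^\varepsilon=\Delta^2 d_n+\Delta^2 v_n^\varepsilon$ to extract dissipation for the difference. It uses that dissipation to absorb $\Delta(|u_n^\varepsilon|^2u_n^\varepsilon)$, and localizes with $\sup_t\|v_n^\varepsilon\|_{H^2}^2$ via \cite[Lemma 5.2]{ROCKNER2012716}. This moves derivatives onto $d_n$ and $v_n^\varepsilon$, so boundary conditions for $v_n^\varepsilon$ are implicitly used. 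It also needs $H^2$-tails for $v_n^\varepsilon$, hence $\Delta h_j\in H^2$, which is more than \eqref{eq:hj_condition} provides.
\item Your $(H^1)^*$--$H^1$ pairing uses only the Neumann conditions of $u_n^\varepsilon$ and $H^1$-tails of $v_n^\varepsilon$. The price is the $\varepsilon\int\|u_n^\varepsilon\|_{H^3}^2$ tail taken from the proof of Lemma~\ref{lemma1}.
\end{itemize}
The paper's own localization has the same weighting issue. Its stopping time $\sigma_M^{\varepsilon,2}$ uses the unweighted $\int_0^t\|u_n^\varepsilon\|_{H^2}^2\,ds$, which Lemma~\ref{lemma1} does not control. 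With the weighted quantity, its $J_4$ term $\varepsilon\int\|u_n^\varepsilon\|_{H^1}^2\|\Delta u_n^\varepsilon\|_H^2\,ds$ is again $O(1)$ and needs the same Cauchy--Schwarz-in-time fix.
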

\begin{proof}
  Let \(d_n := u_n^{\varepsilon} - v_n^{\varepsilon}\), where \(u_n^{\varepsilon}\) and \(v_n^{\varepsilon}\) are the solutions of \eqref{eq:ito_LLBar} and \eqref{eq:Yeps}, respectively, with the same initial value \(x_n\). Then, by It\^o's formula, we have
\[
\begin{aligned}
\| d_n(t)\|_{H}^{2}
&= 2\varepsilon\beta_1 \int_0^t \bigl\langle d_n, \Delta u_n^\varepsilon \bigr\rangle_H \,ds- 2\varepsilon\beta_2 \int_0^t \bigl\langle d_n, \Delta^2 u_n^\varepsilon \bigr\rangle_H \,ds \\
&\quad + 2\varepsilon\beta_3 \int_0^t \Bigl\langle d_n, (1-|u_n^\varepsilon|^2)u_n^\varepsilon \Bigr\rangle_H \,ds  - 2\varepsilon\beta_4 \int_0^t \Bigl\langle d_n, u_n^\varepsilon \times \Delta u_n^\varepsilon \Bigr\rangle_H \,ds \\
&\quad + 2\varepsilon\beta_5 \int_0^t \Bigl\langle d_n, \Delta \bigl( |u_n^\varepsilon|^2 u_n^\varepsilon \bigr) \Bigr\rangle_H \,ds - \varepsilon \sum_{j=1}^{\infty} \int_0^t \Bigl\langle d_n, G_j(u_n^\varepsilon) \times h_j \Bigr\rangle_H \,ds \\
&\quad + \varepsilon \sum_{j=1}^{\infty} \int_0^t \| G_j(u_n^\varepsilon) - G_j(v_n^\varepsilon) \|_{H}^{2} \,ds  + 2\sqrt{\varepsilon} \sum_{j=1}^{\infty} \int_0^t \Bigl\langle d_n, G_j(u_n^\varepsilon) - G_j(v_n^\varepsilon) \Bigr\rangle_H \,dW_j(s)\\
&
:= \sum_{k=1}^{8} J_k(t),
\end{aligned}
\]
Consider the above terms \(J_k\), \(k=1,2,\dots,8\), one by one as follows:

\(\textbf{\underline{\(J_1\) estimation:}}\)
\[
\begin{aligned}
J_1(t)
&= 2\varepsilon\beta_1 \int_0^t \langle d_n, \Delta u_n^\varepsilon \rangle_H \,ds \\
&= -2\varepsilon\beta_1 \int_0^t \|\nabla d_n\|_H^2 \,ds + 2\varepsilon\beta_1 \int_0^t \langle d_n, \Delta v_n^\varepsilon \rangle_H \,ds \\
&\le -2\varepsilon\beta_1 \int_0^t \|\nabla d_n\|_H^2 \,ds + \frac{\varepsilon \beta_2}{4} \int_0^t \|\Delta d_n\|_{H}^2 \,ds \\
&\qquad + \varepsilon C_{\beta_2,\beta_1} \int_0^t \|d_n\|_{H}^2 \,ds.
\end{aligned}
\]

\(\textbf{\underline{\(J_2\) estimation:}}\)

\[
\begin{aligned}
J_2(t)
&= -2\varepsilon\beta_2 \int_0^t \langle d_n, \Delta^2 u_n^\varepsilon \rangle_H \,ds \\
&= -2\varepsilon\beta_2 \int_0^t \|\Delta d_n\|_H^2 \,ds - 2\varepsilon\beta_2 \int_0^t \langle d_n, \Delta^2 v_n^\varepsilon \rangle_H \,ds \\
&\le -2\varepsilon\beta_2 \int_0^t \|\Delta d_n\|_H^2 \,ds +\frac{\varepsilon\beta_2}{4}\int_0^t\|\Delta d_n\|_{H}^2\,ds+ \varepsilon C_{\beta_2} \int_0^t \|\Delta v_n^\varepsilon\|_H^2 \,ds.
\end{aligned}
\]

\( \textbf{\underline{\(J_3\) estimation:}}\)

\[
\begin{aligned}
J_3(t)
&= 2\varepsilon\beta_3 \int_0^t \Bigl\langle d_n, (1-|u_n^\varepsilon|^2)u_n^\varepsilon \Bigr\rangle_H \,ds, \\
&\le 2\varepsilon\beta_3 \int_0^t \|d_n\|_H \, \|(1-|u_n^\varepsilon|^2)u_n^\varepsilon\|_H \,ds \\
&\le C\varepsilon \int_0^t \|d_n\|_H^2 \,ds + \varepsilon C_{\beta_3} \int_0^t \bigl( 1 + \|u_n^\varepsilon\|_{H^1}^6 \bigr) \,ds .
\end{aligned}
\]

\(
\textbf{\underline{\(J_4\) estimation:}}
\)

\[
\begin{aligned}
J_4(t)
&= -2\varepsilon\beta_4 \int_0^t \Bigl\langle d_n, u_n^\varepsilon \times \Delta u_n^\varepsilon \Bigr\rangle_H \,ds, \\
&\le 2\varepsilon\beta_4 \int_0^t \|d_n\|_H \, \| u_n^\varepsilon \times \Delta u_n^\varepsilon \|_H \,ds \\
&\le C\varepsilon \int_0^t \|d_n\|_H^2 \,ds + C_{\beta_3}\varepsilon \int_0^t \|u_n^\varepsilon\|_{H^1}^2 \|\Delta u_n^\varepsilon\|_{H}^2 \,ds .
\end{aligned}
\]

\(
\textbf{\underline{\(J_5\) estimation:}}
\)
  Using \eqref{eq:aux3} we obtain
\[
\begin{aligned}
J_5(t)
&= 2\varepsilon\beta_5 \int_0^t \Bigl\langle d_n, \Delta\bigl(|u_n^\varepsilon|^2 u_n^\varepsilon\bigr) \Bigr\rangle_H \,ds \\
&\le \frac{\varepsilon\beta_2}{4}\int_0^t \|\Delta d_n\|_H^2 \,ds + \varepsilon C_{\beta_2,\beta_5}\|u_n^\varepsilon\|_{H^1}^6 \,ds .
\end{aligned}
\]

\(
\textbf{\underline{\(J_6\) estimation:}}
\)
\[
\begin{aligned}
J_6(t)
&= -\varepsilon \sum_{j=1}^{\infty} \int_0^t \Bigl\langle d_n, G_j(u_n^\varepsilon) \times h_j \Bigr\rangle_H \,ds, \\
&\le \varepsilon \sum_{j=1}^{\infty} \int_0^t \|d_n\|_H \, \| G_j(u_n^\varepsilon) \times h_j \|_H \,ds \\
&\le C \varepsilon \int_0^t \|d_n\|_H^2 \,ds + C_h \varepsilon \int_0^t \bigl( 1 + \|u_n^\varepsilon\|_{H^1}^2 \bigr) \,ds .
\end{aligned}
\]

\(
\textbf{\underline{\(J_7\) estimation:}}
\)
\[
\begin{aligned}
J_7(t)
&= \varepsilon \sum_{j=1}^{\infty} \int_0^t \| G_j(u_n^\varepsilon) - G_j(v_n^\varepsilon) \|_{H}^2 \,ds \\
&= \varepsilon \sum_{j=1}^{\infty} \int_0^t \| d_n \times h_j \|_H^2 \,ds \\
&\le C_h \varepsilon \int_0^t \| d_n \|_H^2 \,ds .
\end{aligned}
\]

\(
\textbf{\underline{\(J_8\) estimation:}}
\)
\[
\begin{aligned}
J_8(t)
&= 2\sqrt{\varepsilon} \sum_{j=1}^{\infty} \int_0^t \Bigl\langle d_n, G_j(u_n^\varepsilon) - G_j(v_n^\varepsilon) \Bigr\rangle_H \,dW_j(s) \\
&= 2\sqrt{\varepsilon} \sum_{j=1}^{\infty} \int_0^t \bigl\langle d_n, -(d_n \times h_j) \bigr\rangle_H \,dW_j(s) = 0,
\end{aligned}
\]
Collecting all estimates, we obtain
\[
\begin{aligned}
\|d_n(t)\|_H^2
&\le \int_0^t \bigl( \varepsilon C_h + \varepsilon C + \varepsilon C_{\beta_1,\beta_2} \bigr) \|d_n\|_{H}^2 \, ds \\
&\quad +\int_0^t \Bigl( \varepsilon C_{\beta_2} \|v_n^\varepsilon\|_{H^2}^2 + \varepsilon C_{\beta_3} \bigl(1 + \|u_n^\varepsilon\|_{H^1}^6\bigr)+ \varepsilon C_{\beta_3} \|u_n^\varepsilon\|_{H^2}^4 + \varepsilon C_{\beta_2,\beta_5} \|u_n^\varepsilon\|_{H^1}^6 \Bigr) \,ds .
\end{aligned}
\]
For \(M > 0\), define the stopping times
\[
 \sigma_{M}^{\varepsilon,1}
:= \inf\Bigl\{ t \ge 0 : \|v_n^\varepsilon(t)\|_{H^2}^{2} > M \Bigr\},
\]
and
\[
 \sigma_{M}^{\varepsilon,2}
:= \inf\Biggl\{ t \ge 0 : \int_0^t \|u_n^\varepsilon(s)\|_{H^2}^{2} \, ds > M, \ \text{or} \ \|u_n^\varepsilon(t)\|_{H^1}^{2} > M \Biggr\}.
\]
Let
\[
 \sigma_{M}^{\varepsilon} := \sigma_{M}^{\varepsilon,1} \wedge \sigma_{M}^{\varepsilon,2}.
\]
Applying Gronwall's inequality up to the stopping time \( \sigma_{M}^{\varepsilon}\), we obtain
\[
\begin{aligned}
\sup_{0\le t\le T\wedge \sigma_{M}^{\varepsilon}}
\|d_n(t)\|_H^2
&\le
\Bigl( \varepsilon C_{\beta_2} M^2 + \varepsilon C_{\beta_3}(1 + M^6) \\
&\qquad\qquad + \varepsilon C_{\beta_3} M^4 + \varepsilon C_{\beta_2,\beta_5} M^6 \Bigr) T \\
&\quad \times \exp\Biggl( \int_0^{T\wedge \sigma_{M}^{\varepsilon}} \bigl( \varepsilon C_h + \varepsilon C + \varepsilon C_{\beta_1,\beta_2} \bigr) \,ds \Biggr).
\end{aligned}
\]

Define
\[
C_{\beta,\varepsilon}^{M,T}
:= \Bigl( \varepsilon C_{\beta_2} M^2 + \varepsilon C_{\beta_3}(1 + M^6) + \varepsilon C_{\beta_3} M^4 + \varepsilon C_{\beta_2,\beta_5} M^6 \Bigr) T,
\]
and
\[
D_{h,\beta}^{\varepsilon,T} := (\varepsilon C_h + \varepsilon C + \varepsilon C_{\beta_1,\beta_2})T.
\]

Taking expectation and using the identity \(d_n = u_n^\varepsilon - v_n^\varepsilon\), we obtain
\[
\mathbb{E} \Biggl( \sup_{0\le s\le T\wedge \sigma_{M}^{\varepsilon}} \|u_n^\varepsilon(s) - v_n^\varepsilon(s)\|_{H}^{2} \Biggr)
\le C_{\beta_2,\beta_3,\beta_5,\varepsilon}^{M,T} \exp\bigl( D_{h,\beta_1,\beta_2}^{\varepsilon,T} \bigr).
\]

Moreover,
\[
\Biggl( \mathbb{E} \Biggl[ \sup_{0\le s\le T\wedge \sigma_{M}^{\varepsilon} }\|u_n^\varepsilon(s) - v_n^\varepsilon(s)\|_{H}^{2p} \Biggr] \Biggr)^{\frac{2}{p}}
\le 2 \bigl( C_{\beta_2,\beta_3,\beta_5,\varepsilon}^{M,T} \bigr)^2 \exp\bigl\{ 2 D_{h,\beta_1,\beta_2}^{\varepsilon,T} \bigr\}.
\]
Taking \(p=\frac{2}{\varepsilon}\), and proceeding exactly as in the proofs of Lemma~\ref{lemma2} and Lemma~\ref{lemma3}, we obtain
\[
\sup_{0<\varepsilon\le1}
\varepsilon
\log
\mathbb{P}
\Biggl(
\sup_{0\le s\le T\wedge \sigma_{M}^{\varepsilon}}
\|u_n^\varepsilon(s)-v_n^\varepsilon(s)\|_{H}^{2}
>\delta
\Biggr)
\longrightarrow -\infty,
\qquad \text{as } \varepsilon\to0 .
\]
For any \(K > 0\), using the above estimate, Lemma~\ref{lemma1}, and \cite[Lemma 5.2]{ROCKNER2012716}, there exists a constant \(M > 0\) such that, for any \(0 < \varepsilon \le 1\),
\[
\mathbb{P} \Bigl( \|u_n^\varepsilon\|_{H^1}^{H^2}(T) > M \Bigr) \le e^{-K/\varepsilon},
\]
and
\[
\mathbb{P} \Biggl( \sup_{0 \le t \le T} \|v_n^\varepsilon(t)\|_{H^2}^{2} > M \Biggr) \le e^{-K/\varepsilon}.
\]

Moreover, by the above estimate, there exists \(0 < \varepsilon_0 < 1\) such that, for all \(\varepsilon \le \varepsilon_0\),
\[
\begin{aligned}
& \varepsilon \log \mathbb{P} \Biggl( \sup_{0 \le s \le T} \|u_n^\varepsilon(s) - v_n^\varepsilon(s)\|_{H}^{2} > \delta, \;
\|u_n^\varepsilon\|_{H^1}^{H^2}(T) \le M, \\
&\qquad\qquad\qquad\qquad \sup_{0 \le t \le T} \|v_n^\varepsilon(t)\|_{H^2}^{2} \le M \Biggr) \\
&\le \varepsilon \log \mathbb{P} \Biggl( \sup_{0 \le s \le T \wedge \sigma_{M}^{\varepsilon}} \|u_n^\varepsilon(s) - v_n^\varepsilon(s)\|_{H}^{2} > \delta \Biggr) \le -K .
\end{aligned}
\]

Combining these estimates, one can obtain
\[
\mathbb{P}
\Biggl(
\sup_{0\le t\le T}
\|u_n^\varepsilon(t)-v_n^\varepsilon(t)\|_{H}^{2}
>\delta
\Biggr)
\le
3e^{-K/\varepsilon},
\]
for all \(\varepsilon\le\varepsilon_0\). Hence, the proof is completed.
\end{proof}

\begin{lemma}{\label{lemma5}}
   For any fixed \(n\in\mathbb{N}\) and any \(\delta > 0\), we have
\[
\lim_{\varepsilon \to 0} \,
\varepsilon \log \mathbb{P} \left( 
\sup_{0 \le t \le T} 
\|u_n^\varepsilon(t) - v_n^\varepsilon(t)\|_{H^1}^2 >2 \delta 
\right) = -\infty.
\]
\end{lemma}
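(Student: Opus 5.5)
Since Lemma~\ref{lemma4} already controls the $H$-part of $d_n := u_n^\varepsilon - v_n^\varepsilon$, the plan is to prove the analogous statement for the gradient,
\[
\lim_{\varepsilon\to0}\varepsilon\log\mathbb P\Bigl(\sup_{0\le t\le T}\|\nabla d_n(t)\|_H^2>\delta\Bigr)=-\infty ,
\]
and then combine the two by the union bound, exactly as at the end of Lemma~\ref{lemma3}. That union bound is what produces the threshold $2\delta$ for $\|d_n\|_{H^1}^2 = \|d_n\|_H^2 + \|\nabla d_n\|_H^2$.

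For the gradient estimate I apply It\^o's formula to $\tfrac12\|\nabla d_n(t)\|_H^2$. Note that $d_n(0)=0$. Every drift term carries a factor $\varepsilon$, and each is treated by integrating by parts onto $\Delta d_n$ or $\nabla\Delta d_n$.
\begin{itemize}
\item The linear terms are split by writing $u_n^\varepsilon=d_n+v_n^\varepsilon$. This gives the dissipative terms $-\varepsilon\beta_1\int\|\Delta d_n\|_H^2$ and $-\varepsilon\beta_2\int\|\nabla\Delta d_n\|_H^2$. The remainders $\varepsilon\langle\nabla\Delta v_n^\varepsilon,\nabla d_n\rangle$ and $\varepsilon\langle\nabla\Delta^2 v_n^\varepsilon,\nabla d_n\rangle=-\varepsilon\langle\nabla\Delta v_n^\varepsilon,\nabla\Delta d_n\rangle$ are absorbed by Young's inequality into $\tfrac{\varepsilon\beta_2}{8}\int\|\nabla\Delta d_n\|_H^2$, at the cost of $\varepsilon C\int\|v_n^\varepsilon\|_{H^3}^2$.
\item The nonlinear terms involve only $u_n^\varepsilon$: $\nabla[(1-|u_n^\varepsilon|^2)u_n^\varepsilon]$, $\nabla(u_n^\varepsilon\times\Delta u_n^\varepsilon)$ and $\nabla\Delta(|u_n^\varepsilon|^2u_n^\varepsilon)$. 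I move one derivative onto $d_n$, so they are paired with $\Delta d_n$ or $\nabla\Delta d_n$. With $H^1\hookrightarrow L^\infty$, \eqref{eq:aux1} and \eqref{eq:aux3}, they are then bounded by
\[
\tfrac{\varepsilon\beta_2}{8}\int_0^t\|\nabla\Delta d_n\|_H^2\,ds+\varepsilon C\int_0^t\bigl(1+\|u_n^\varepsilon\|_{H^1}^{6}+\|u_n^\varepsilon\|_{H^1}^{2}\|u_n^\varepsilon\|_{H^2}^{2}+\|u_n^\varepsilon\|_{H^1}^4\|u_n^\varepsilon\|_{H^2}^2\bigr)ds .
\]
\item The It\^o correction terms are bounded by $\varepsilon C_h\int(1+\|u_n^\varepsilon\|_{H^1}^2+\|\nabla d_n\|_H^2)$. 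These use the $H^3$ bound on $h_j$.
\item The martingale part is $\sqrt\varepsilon\sum_j\int\langle\nabla(d_n\times h_j),\nabla d_n\rangle\,dW_j$. Here $\langle d_n\times\nabla h_j,\nabla d_n\rangle$ survives, so it does not vanish. It is handled by the Burkholder--Davis--Gundy inequality with quadratic variation $\le C_h\int\|d_n\|_{H^1}^2\|\nabla d_n\|_H^2$.
\end{itemize}

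Next I localize with stopping times. I use $\sigma_M^\varepsilon$ as in Lemma~\ref{lemma4}, but strengthen the condition on $v_n^\varepsilon$ to $\int_0^t\|v_n^\varepsilon\|_{H^3}^2\,ds\le M$, alongside the $H^1$ and $\int\|\cdot\|_{H^2}^2$ bounds on $u_n^\varepsilon$. Before $\sigma_M^\varepsilon$ all forcing terms are $O(\varepsilon C_{M,T})$ pathwise. Gronwall's inequality then gives
\[
\sup_{t\le T\wedge\sigma_M^\varepsilon}\|\nabla d_n\|_H^2\le\bigl(\varepsilon C_{M,T}+\sup|\text{mart.}|\bigr)e^{\varepsilon C_{M,T}} .
\]
Taking $L^p$ norms, applying BDG with constant $\sqrt p$, using Minkowski and Gronwall again as in Lemma~\ref{lemma1}, and setting $p=2/\varepsilon$ yields
\[
\bigl(\mathbb E\sup\|\nabla d_n\|_H^{2p}\bigr)^{2/p}\le C_{M,T}\,\varepsilon^2 e^{C_{M,T}} .
\]
Chebyshev's inequality then gives
\[
\varepsilon\log\mathbb P(\cdots>\delta)\le \log(C_{M,T}\,\varepsilon)-\log\delta\to-\infty
\]
as $\varepsilon\to0$, for each fixed $M$. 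Finally I remove the localization. Given $K$, I choose $M$ by Lemma~\ref{lemma1} for $u_n^\varepsilon$, and by \cite[Lemma 5.2]{ROCKNER2012716} (exponential tails for the noise-only equation~\eqref{eq:Yeps}) for $v_n^\varepsilon$. This gives $\mathbb P(\sigma_M^\varepsilon<T)\le 2e^{-K/\varepsilon}$, and the argument concludes as in Lemma~\ref{lemma4}.

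\textbf{Main obstacle.} I expect the hardest step to be the regularity of $v_n^\varepsilon$: the estimate needs exponential tail bounds for $\int_0^T\|v_n^\varepsilon\|_{H^3}^2\,ds$, or alternatively $\sup_t\|v_n^\varepsilon\|_{H^3}$. Equation~\eqref{eq:Yeps} has no smoothing, so this regularity must come from the initial datum $x_n$ and from the $h_j$. My first attempt is to choose $x_n\in H^3$ rather than merely $H^2$; this is allowed because the density argument permits any approximation converging in $H^1$. I would then prove an $H^3$ energy estimate for the linear equation~\eqref{eq:Yeps}. Its coefficients $v\times h_j$ require $h_j\in H^3$, which is exactly assumption~\eqref{eq:hj_condition}, and the same $p=1/\varepsilon$ moment argument as in Lemma~\ref{lemma1} should give the tails. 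If this fails, the fallback is to handle the $\Delta^2 v_n^\varepsilon$ and quasilinear terms through the strong dissipation $\varepsilon\beta_2\|\nabla\Delta u_n^\varepsilon\|^2$ of $u_n^\varepsilon$ instead.
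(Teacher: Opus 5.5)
Your skeleton is the paper's. It consists of It\^o's formula for $\|\nabla d_n\|_H^2$ with $u_n^\varepsilon=d_n+v_n^\varepsilon$ in the linear terms, localization, Gronwall, BDG with $p=2/\varepsilon$, removal of the localization via Lemma~\ref{lemma1} and \cite[Lemma 5.2]{ROCKNER2012716}, and the union bound with Lemma~\ref{lemma4}. You are also right that $\varepsilon\beta_2\langle\nabla d_n,\nabla\Delta^2v_n^\varepsilon\rangle_H=\varepsilon\beta_2\langle\nabla\Delta d_n,\nabla\Delta v_n^\varepsilon\rangle_H$ costs $\|\nabla\Delta v_n^\varepsilon\|_H^2$. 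The paper bounds this term by $C\varepsilon\beta_2\|\nabla v_n^\varepsilon\|_H^2$, which is not a valid Young step, so you cannot defer to the paper here.

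\textbf{The gap.} Your resolution of this step fails under \eqref{eq:hj_condition}. You checked only the multiplicative part of $G_j(v)=-v\times h_j+h_j-\Delta h_j$. The additive part $h_j-\Delta h_j$ lies only in $H^1$ when merely $\sum_j\|h_j\|_{H^3}^2<\infty$. So the It\^o correction $\varepsilon\sum_j\|G_j(v)\|_{H^3}^2$ in an $H^3$ estimate for \eqref{eq:Yeps} is in general infinite; finiteness essentially needs $\sum_j\|h_j\|_{H^5}^2<\infty$. Choosing $x_n\in H^3$ does not help. If $v_n^\varepsilon$ were $H^2$-valued, then so would be $x_n$ and $\sqrt{\varepsilon}\sum_j\int_0^t v_n^\varepsilon\times h_j\,dW_j$. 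That would force the Gaussian term $\sqrt{\varepsilon}\sum_j(h_j-\Delta h_j)W_j(t)$ into $H^2$, which fails in general. For the same reason, the paper's localization on $\sup_t\|v_n^\varepsilon\|_{H^2}^2$ is not available either.

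Thus $d_n$ is only $H^1$-valued, and your fallback breaks down too. With $u_n^\varepsilon\in L^2(0,T;H^3)$ and $\nabla d_n$ only in $L^2$, the pairing $\langle\nabla d_n,\nabla\Delta^2u_n^\varepsilon\rangle_H$ is undefined. So the dissipation $\varepsilon\beta_2\|\nabla\Delta u_n^\varepsilon\|_H^2$ cannot be exploited, and the It\^o formula for $\|\nabla d_n\|_H^2$ is itself unjustified. Closing the argument needs an additional ingredient, for instance one of the following:
\begin{itemize}
\item a stronger noise hypothesis, such as $\sum_j\|h_j\|_{H^5}^2<\infty$ together with compatibility of $h_j$ and $\Delta h_j$ with the Neumann conditions (your integrations by parts also silently impose these conditions on $v_n^\varepsilon$);
\item a second approximation step that smooths the coefficients $h_j-\Delta h_j$, with analogues of Lemmas~\ref{lemma2}, \ref{lemma3} and \ref{lemma6} for that approximation.
\end{itemize}

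\textbf{A smaller point on the moment step.} Suppose that before $\sigma_M^\varepsilon$ you replace the factor $\|d_n\|_{H^1}^2$ in your quadratic variation by $CM$, as the paper effectively does. Set $y(t)=\bigl(\mathbb E\sup_{s\le t\wedge\sigma_M^\varepsilon}\|\nabla d_n(s)\|_H^{2p}\bigr)^{1/p}$ and $p\varepsilon=2$. Then you only obtain $y(T)^2\le C\varepsilon^2+C\int_0^T y(s)\,ds$, which gives just $y(T)\le C(\varepsilon+T)$ and makes Chebyshev useless. Keep the small factor instead: $|\langle d_n\times\nabla h_j,\nabla d_n\rangle_H|\le\|\nabla h_j\|_{L^\infty}\|d_n\|_H\|\nabla d_n\|_H$. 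Since the $L^2$ martingale vanishes, the Gronwall bound in Lemma~\ref{lemma4} gives $\sup_{t\le T\wedge\sigma_M^\varepsilon}\|d_n(t)\|_H^2\le C_M\varepsilon$ pathwise. This yields $y(T)^2\le C\varepsilon^2+C\varepsilon\int_0^T y(s)\,ds$, hence $y=O(\varepsilon)$, which is exactly what your final Chebyshev estimate requires.
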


\begin{proof}
Let \(d_n := u_n^{\varepsilon}-v_n^{\varepsilon}\), where \(u_n^{\varepsilon}\) and \(v_n^{\varepsilon}\) are the solutions of \eqref{eq:ito_LLBar} and \eqref{eq:Yeps}, respectively, with the same initial value \(x_n\).
Then, by It\^o's formula,
\begin{equation}
\begin{aligned}
\|\nabla d_n(t)\|_{H}^{2}
&= 2\varepsilon\beta_1\int_0^t \langle \nabla d_n,\nabla \Delta u_n^{\varepsilon}\rangle_{H}\,ds
- 2\varepsilon\beta_2\int_0^t \langle \nabla d_n,\nabla \Delta^{2}u_n^{\varepsilon}\rangle_{H}\,ds \\
&\quad + 2\varepsilon\beta_3\int_0^t \Bigl\langle \nabla d_n,\,
\nabla\bigl((1-|u_n^\varepsilon|^2)u_n^\varepsilon\bigr) \Bigr\rangle_{H}\,ds 
- 2\varepsilon\beta_4\int_0^t \Bigl\langle \nabla d_n,\,
\nabla\bigl(u_n^\varepsilon\times\Delta u_n^\varepsilon\bigr) \Bigr\rangle_{H}\,ds \\
&\quad + 2\varepsilon\beta_5\int_0^t \Bigl\langle \nabla d_n,\,
\nabla\Delta\bigl(|u_n^\varepsilon|^2u_n^\varepsilon\bigr) \Bigr\rangle_{H}\,ds 
- \varepsilon\sum_{j=1}^{\infty}\int_0^t \Bigl\langle \nabla d_n,\,
\nabla\bigl(G_j(u_n^\varepsilon)\times h_j\bigr) \Bigr\rangle_{H}\,ds \\
&\quad + \varepsilon\sum_{j=1}^{\infty}\int_0^t \|\nabla(G_j(u_n^\varepsilon)-G_j(v_n^\varepsilon))\|_{H}^{2}\,ds  + 2\sqrt{\varepsilon}\sum_{j=1}^{\infty}\int_0^t \Bigl\langle \nabla d_n,\,
\nabla(G_j(u_n^\varepsilon)-G_j(v_n^\varepsilon)) \Bigr\rangle_{H}\,dW_j(s) \\
&= \sum_{i=1}^{8} K_i(t),
\end{aligned}
\end{equation}

By integration by parts, we obtain
\begin{equation}
\begin{aligned}
2\varepsilon\beta_1\int_0^t \langle \nabla d_n, \nabla \Delta u_n^{\varepsilon} \rangle_H \,ds 
&= -2\varepsilon\beta_1\int_0^t \|\Delta d_n\|_H^2 \,ds
+ 2\varepsilon\beta_1\int_0^t \langle \nabla d_n, \nabla \Delta v_n^{\varepsilon} \rangle_H \,ds \\
&\leq -2\varepsilon\beta_1\int_0^t \|\Delta d_n\|_H^2 \,ds
+ \frac{\varepsilon\beta_1}{4}\int_0^t \|\Delta d_n\|_H^2 \,ds \\
&\quad + C\varepsilon\beta_1\int_0^t \| \Delta v_n^{\varepsilon}\|_H^2 \,ds .
\end{aligned}
\end{equation}
Similarly, for the term \(K_2\) we have
\begin{equation}
\begin{aligned}
K_2
&= -2\varepsilon\beta_2\int_0^t \langle \nabla d_n, \nabla \Delta^2 u_n^{\varepsilon} \rangle_H \,ds \\
&= -2\varepsilon\beta_2\int_0^t \langle \nabla d_n, \nabla \Delta^2 d_n \rangle_H \,ds
- 2\varepsilon\beta_2\int_0^t \langle \nabla d_n, \nabla \Delta^2 v_n^{\varepsilon} \rangle_H \,ds .
\end{aligned}
\end{equation}

Using integration by parts, the first term yields
\[
-2\varepsilon\beta_2 \langle \nabla d_n, \nabla \Delta^2 d_n \rangle_H = -2\varepsilon\beta_2 \|\nabla \Delta d_n\|_H^2 .
\]

Then, applying Young's inequality to the second term, we obtain
\begin{equation}
\begin{aligned}
K_2
&= -2\varepsilon\beta_2\int_0^t \|\nabla\Delta d_n\|_H^2 \,ds
- 2\varepsilon\beta_2\int_0^t \langle \nabla d_n, \nabla \Delta^2 v_n^{\varepsilon} \rangle_H \,ds \\
&\leq -2\varepsilon\beta_2\int_0^t \|\nabla\Delta d_n\|_H^2 \,ds
+ \frac{\varepsilon\beta_2}{4}\int_0^t \|\nabla\Delta d_n\|_H^2 \,ds \\
&\quad + C\varepsilon\beta_2\int_0^t \|\nabla  v_n^{\varepsilon}\|_H^2 \,ds .
\end{aligned}
\end{equation}
For the term \(K_3\) involving \(\beta_3\), we have
\begin{equation}
K_3
=
2\varepsilon\beta_3
\int_0^t
\Bigl\langle
\nabla d_n,\,
\nabla\bigl((1-|u_n^\varepsilon|^2)u_n^\varepsilon\bigr)
\Bigr\rangle_H ds .
\end{equation}

Using Young's inequality and the embedding \(H^1 \hookrightarrow L^\infty\), we obtain
\begin{equation}
\begin{aligned}
K_3
&\leq
\frac{\varepsilon\beta_1}{4}
\int_0^t
\|\Delta d_n\|_{H}^{2}\,ds
+
\varepsilon C_{\beta_1,\beta_3}\int_0^t
\bigl(1+\|u_n^\varepsilon\|_{H^1}^{6}\bigr)\,ds .
\end{aligned}
\end{equation}

For the term \(K_4\) involving \(\beta_4\), we have
\begin{equation}
K_4
=
-2\varepsilon\beta_4
\int_0^t
\Bigl\langle
\nabla d_n,\,
\nabla\bigl(u_n^\varepsilon\times\Delta u_n^\varepsilon\bigr)
\Bigr\rangle_H ds .
\end{equation}

Using Young's inequality and the embedding \(H^1\hookrightarrow L^\infty\), we obtain
\begin{equation}
\begin{aligned}
K_4
&\le
\frac{\varepsilon\beta_1}{4}
\int_0^t
\|\Delta d_n\|_H^2\,ds +C\varepsilon\frac{\beta_4^2}{\beta_1}
\int_0^t
\|u_n^\varepsilon\|_{H^2}^2\|u_n^\varepsilon\|_{H^1}^2\,ds .
\end{aligned}
\end{equation}

For the term \(K_5\) involving \(\beta_5\), we have
\begin{equation}
K_5
=
2\varepsilon\beta_5
\int_0^t
\Bigl\langle
\nabla d_n,\,
\nabla\Delta\bigl(|u_n^\varepsilon|^2u_n^\varepsilon\bigr)
\Bigr\rangle_H ds .
\end{equation}

Recall the identity
\[
\nabla(|v|^2v)
=
2v(v\cdot\nabla v)
+
|v|^2\nabla v .
\]

Thus, for \(u_n^\varepsilon\), we have
\[
\nabla\bigl(|u_n^\varepsilon|^2u_n^\varepsilon\bigr)
=
2u_n^\varepsilon
(u_n^\varepsilon\cdot\nabla u_n^\varepsilon)
+
|u_n^\varepsilon|^2\nabla u_n^\varepsilon .
\]

Using Young's inequality and the embedding \(H^1\hookrightarrow L^\infty\), we obtain
\begin{equation}
\begin{aligned}
|K_5|
&\le
\frac{\varepsilon\beta_2}{4}
\int_0^t
\|\nabla\Delta d_n\|_H^2\,ds
+
C\varepsilon\frac{\beta_5^2}{\beta_2}
\int_0^t
\|u_n^\varepsilon\|_{H^1}^6\,ds .
\end{aligned}
\end{equation}
For the term
\begin{equation}
\varepsilon
\sum_{j=1}^{\infty}
\int_0^t
\Bigl\langle
\nabla d_n,\,
\nabla G_j(u_n^\varepsilon)\times h_j
\Bigr\rangle_H ds ,
\end{equation}
using Young's inequality, we obtain
\begin{equation}
\begin{aligned}
\varepsilon
\sum_{j=1}^{\infty}
\int_0^t
\Bigl\langle
\nabla d_n,\,
\nabla G_j(u_n^\varepsilon)\times h_j
\Bigr\rangle_H ds
&\le
\varepsilon
\int_0^t
\|\nabla d_n\|_H^2\,ds \\
&\quad
+
\varepsilon C_h
\int_0^t
\bigl(1+\|u_n^\varepsilon\|_{H^1}^2\bigr)\,ds .
\end{aligned}
\end{equation}
Consider the term
\begin{equation}
K_6
=
\varepsilon
\sum_{j=1}^{\infty}
\int_0^t
\Bigl\|
\nabla\bigl(
G_j(u_n^\varepsilon)-G_j(v_n^\varepsilon)
\bigr)
\Bigr\|_H^2\,ds .
\end{equation}

Since
\[
\Bigl\|
\nabla\bigl(
G_j(u_n^\varepsilon)-G_j(v_n^\varepsilon)
\bigr)
\Bigr\|_H^2
\le
C_h\|\nabla d_n\|_H^2 ,
\]
hence, we obtain
\begin{equation}
\begin{aligned}
K_6
&\le
\varepsilon C_h
\int_0^t
\|\nabla d_n(s)\|_H^2\,ds .
\end{aligned}
\end{equation}
Collecting all estimates, we obtain
\begin{equation}
\begin{aligned}
\|\nabla d_n(t)\|_H^2
&\le \|\nabla d_n(0)\|_H^2
+ \varepsilon C_h T \\
&\quad + \int_0^t \bigl( \varepsilon C_h + \varepsilon C\beta_3 \bigr) \|\nabla d_n\|_H^2 \, ds \\
&\quad + C\varepsilon \int_0^t \Bigl( 1 + \frac{\beta_3}{\beta_1} \|u_n^\varepsilon\|_{H^1}^6 + \|v_n^\varepsilon\|_{H^2}^2 + \|u_n^\varepsilon\|_{H^1}^2 \\
&\qquad\qquad + \frac{\beta_4^2}{\beta_1} \|u_n^\varepsilon\|_{H^2}^2 \|u_n^\varepsilon\|_{H^1} + \beta_1 \|\Delta v_n^\varepsilon\|_{H}^2 + \beta_2 \|\nabla v_n^\varepsilon\|_{H}^2 \Bigr) \, ds \\
&\quad + 2\sqrt{\varepsilon} \sum_{j=1}^{\infty} \int_0^t \Bigl\langle \nabla d_n, \nabla\bigl( G_j(u_n^\varepsilon) - G_j(v_n^\varepsilon) \bigr) \Bigr\rangle_H \, dW_j(s).
\end{aligned}
\end{equation}
For \(M>0\), define the stopping times
\[
\sigma_{M}^{1,\varepsilon}
=
\inf\Biggl\{
t\ge0:
\|v_n^\varepsilon(t)\|_{H^2}^{2}>M
\Biggr\},
\]
and
\[
\sigma_{M}^{2,\varepsilon}
=
\inf\Biggl\{
t\ge0:
\int_0^t \|u_n^\varepsilon(s)\|_{H^2}^{2}\,ds>M,
\ \text{or}\
\|u_n^\varepsilon(t)\|_{H^1}^{2}>M
\Biggr\}.
\]
Let
\[
\sigma_{M}^{\varepsilon}
=
\sigma_{M}^{1,\varepsilon}
\wedge
\sigma_{M}^{2,\varepsilon}.
\]

According to all the above estimates, it follows that
\[
\begin{aligned}
& \sup_{0\le s\le t\wedge \sigma_{M}^{\varepsilon}} \|\nabla d_n(s)\|_{H}^{2}\le \varepsilon \int_0^{t\wedge \sigma_{M}^{\varepsilon}} \bigl( C_h + C\beta_3 \bigr) \|\nabla d_n(s)\|_{H}^{2} \, ds + \sup_{0\le s\le t\wedge \sigma_{M}^{\varepsilon}} |K_8(s)| + \varepsilon C_{h,M}^{T,\beta},
\end{aligned}
\]
where
\[
\begin{aligned}
C_{h,M}^{T,\beta}
:=&\; C_h T M^2 + C \Biggl[ T + \frac{\beta_3}{\beta_1} M^{6} + T M^2 + T M + \frac{\beta_4^{2}}{\beta_1} M^{3} + \beta_1 M^2 + \beta_2 T M^2 \Biggr]\,\,\text{and } C_h^{\beta_3}:= C_h + C\beta_3.
\end{aligned}
\]
Applying Gronwall's inequality, we obtain
\[
\begin{aligned}
& \sup_{0\le s\le t\wedge \sigma_{M}^{\varepsilon}} \|\nabla d_n(s)\|_{H}^{2} \\
&\le \Biggl[ \sup_{0\le s\le t\wedge \sigma_{M}^{\varepsilon}} |K_8(s)| + \varepsilon C_{h,M}^{T,\beta} \Biggr] \exp \Bigl\{ \varepsilon C_h^{\beta_3} T \Bigr\}.
\end{aligned}
\]

Now consider
\[
K_8(t) = 2\sqrt{\varepsilon} \sum_{j=1}^{\infty} \int_0^t \Bigl\langle \nabla d_n, \nabla\bigl( G_j(u_n^\varepsilon) - G_j(v_n^\varepsilon) \bigr) \Bigr\rangle_H \, dW_j(s).
\]

Since \(G_j\) is linear, we have
\[
G_j(u_n^\varepsilon) - G_j(v_n^\varepsilon) = -d_n \times h_j .
\]
Using
\[
\sum_{j=1}^{\infty}
\int_0^{T\wedge \sigma_{M}^{\varepsilon}}
\Bigl| \Bigl\langle \nabla(d_n(s) \times h_j), \nabla d_n(s) \Bigr\rangle_H \Bigr|^{2} \, ds
\le C_{h,M} \int_0^{T\wedge \sigma_{M}^{\varepsilon}} \|\nabla d_n(s)\|_{H}^{2} \, ds,
\]
the Burkholder--Davis--Gundy inequality yields
\[
\begin{aligned}
& \mathbb{E} \Biggl[ \sup_{0\le t\le T\wedge \sigma_{M}^{\varepsilon}} |K_8(t)|^{p} \Biggr] \\
&\le C_p \varepsilon^{p/2} \mathbb{E} \Biggl[ \Biggl( \sum_{j=1}^{\infty} \int_0^{T\wedge \sigma_{M}^{\varepsilon}} \Bigl| \Bigl\langle \nabla(d_n(s) \times h_j), \nabla d_n(s) \Bigr\rangle_H \Bigr|^{2} \, ds \Biggr)^{p/2} \Biggr] \\
&\le C_p \varepsilon^{p/2} \mathbb{E} \Biggl[ \Biggl( C_{h,M} \int_0^{T\wedge \sigma_{M}^{\varepsilon}} \|\nabla d_n(s)\|_{H}^{2} \, ds \Biggr)^{p/2} \Biggr].
\end{aligned}
\]
Hence, we obtain
\[
\begin{aligned}
\Biggl( \mathbb{E} \Bigl[ \sup_{0\le s\le T\wedge \sigma_{M}^{\varepsilon}} \|\nabla d_n(s)\|_{H}^{2p} \Bigr] \Biggr)^{\frac{2}{p}}& \le 2 \bigl( \varepsilon C_{h,M}^{T,\beta} \bigr)^2 \exp \Bigl\{ 2\varepsilon C_h^{\beta_3} T \Bigr\} \\
&\quad\qquad + \exp \Bigl\{ 2\varepsilon C_h^{\beta_3} T \Bigr\} C_{h,M}^{2} \, p \varepsilon \int_0^{T\wedge \sigma_{M}^{\varepsilon}} \Biggl( \mathbb{E} \bigl[ \|\nabla d_n(s)\|_{H}^{2p} \bigr] \Biggr)^{\frac{2}{p}} \, ds .
\end{aligned}
\]
Gronwall's inequality yields
\[
\begin{aligned}
& \Biggl( \mathbb{E} \Bigl[ \sup_{0\le s\le T\wedge \sigma_{M}^{\varepsilon}} \|\nabla d_n(s)\|_{H}^{2p} \Bigr] \Biggr)^{\frac{2}{p}}\le 2 \bigl( \varepsilon C_{h,M}^{T,\beta} \bigr)^2 \exp \Bigl\{ 2\varepsilon C_h^{\beta_3} T \Bigr\}\exp \Biggl\{ \exp \Bigl( 2\varepsilon C_h^{\beta_3} T \Bigr) C_{h,M}^{2} \, p \varepsilon T \Biggr\}.
\end{aligned}
\]
Taking \(p = \frac{2}{\varepsilon}\), and proceeding exactly as in the proof of Lemma~\ref{lemma3}, we obtain
\[
\begin{aligned}
& \varepsilon \log \mathbb{P} \Biggl( \sup_{0\le s\le T\wedge \sigma_{M}^{\varepsilon}} \|\nabla d_n(s)\|_{H}^{2} > \delta \Biggr) \longrightarrow -\infty, \qquad \varepsilon \to 0 .
\end{aligned}
\]
For any \(K > 0\), by Lemma~\ref{lemma1}, \cite[Lemma 5.2]{ROCKNER2012716}, and the above estimates, one can choose \(M > 0\) sufficiently large such that, for every \(0 < \varepsilon \le 1\),
\[
\mathbb{P} \Bigl( \|u_n^\varepsilon\|_{H^1}^{H^2}(T) > M \Bigr) \le e^{-K/\varepsilon},
\]
and
\[
\mathbb{P} \Biggl( \sup_{0 \le t \le T} \|v_n^\varepsilon(t)\|_{H^2}^{2} > M \Biggr) \le e^{-K/\varepsilon}.
\]

Furthermore, the previous estimate implies that there exists \(0 < \varepsilon_0 < 1\) such that, for all \(\varepsilon \le \varepsilon_0\),
\[
\begin{aligned}
& \varepsilon \log \mathbb{P} \Biggl( \sup_{0 \le s \le T} \|\nabla u_n^\varepsilon(s) - \nabla v_n^\varepsilon(s)\|_{H}^{2} > \delta,  \|u_n^\varepsilon\|_{H^1}^{H^2}(T) \le M, \quad \sup_{0 \le t \le T} \|v_n^\varepsilon(t)\|_{H^2}^{2} \le M \Biggr) \\
&\le \varepsilon \log \mathbb{P} \Biggl( \sup_{0 \le s \le T \wedge \sigma_{M}^{\varepsilon}} \|\nabla u_n^\varepsilon(s) - \nabla v_n^\varepsilon(s)\|_{H}^{2} > \delta \Biggr) \le -K .
\end{aligned}
\]
Consequently, for all $\varepsilon\le\varepsilon_0$,
\[
\mathbb P\Bigl(\sup_{0\le t\le T}\|\nabla u_n^\varepsilon(t)-\nabla v_n^\varepsilon(t)\|_H^2>\delta\Bigr)
\le 3e^{-K/\varepsilon}.
\]
On the other hand, Lemma~\ref{lemma4} yields
\[
\mathbb P\Bigl(\sup_{0\le t\le T}\|u_n^\varepsilon(t)-v_n^\varepsilon(t)\|_H^2>\delta\Bigr)
\le 3e^{-K/\varepsilon}.
\]
Therefore,
\[
\mathbb P\Bigl(\sup_{0\le t\le T}\|u_n^\varepsilon(t)-v_n^\varepsilon(t)\|_{H^1}^2>2\delta\Bigr)
\le 6e^{-K/\varepsilon},
\]
where we have used the identity
\(
\|w\|_{H^1}^2=\|w\|_H^2+\|\nabla w\|_H^2
\). Since \(K>0\) is arbitrary, the proof is complete.
\end{proof}

\begin{lemma}\label{lemma6}
For any \(\delta>0\), we have
\[
\lim_{n\to\infty}
\sup_{0<\varepsilon\le1}
\varepsilon
\log
\mathbb{P}
\Biggl(
\sup_{0\le t\le T}
\|v^\varepsilon(t)-v_n^\varepsilon(t)\|_{H^1}^{2}
>2\delta
\Biggr)
=
-\infty .
\]
\end{lemma}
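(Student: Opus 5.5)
The plan is to work directly with the difference $z_n^\varepsilon := v_n^\varepsilon - v^\varepsilon$. Both processes solve \eqref{eq:Yeps}, with initial data $x_n$ and $x$ respectively. Since $G_j$ is affine, we have $G_j(v_n^\varepsilon)-G_j(v^\varepsilon) = -z_n^\varepsilon\times h_j$, so
\[
z_n^\varepsilon(t) = (x_n-x) - \sqrt{\varepsilon}\sum_{j=1}^\infty\int_0^t (z_n^\varepsilon\times h_j)\,dW_j(s).
\]
This is a linear equation with no drift at all, so no stopping-time localization or a priori bounds from Lemma~\ref{lemma1} should be needed. The $H$-part and the gradient part will be treated separately and then combined by the union bound, exactly as at the end of Lemma~\ref{lemma3}.

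\textbf{$L^2$ part.} Apply It\^o's formula to $\|z_n^\varepsilon(t)\|_H^2$. Since $\langle z\times h_j, z\rangle_H=0$, the martingale term vanishes identically. What remains is
\[
\|z_n^\varepsilon(t)\|_H^2=\|x_n-x\|_H^2+\varepsilon\sum_j\int_0^t\|z_n^\varepsilon\times h_j\|_H^2\,ds \le \|x_n-x\|_H^2+\varepsilon C_h\int_0^t\|z_n^\varepsilon\|_H^2\,ds.
\]
Gronwall then gives, pathwise, $\sup_{t\le T}\|z_n^\varepsilon(t)\|_H^2\le \|x_n-x\|_H^2e^{C_hT}$. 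Hence for $n$ large the event $\{\sup_t\|z_n^\varepsilon\|_H^2>\delta\}$ is empty, uniformly in $\varepsilon\in(0,1]$, and its probability is $0$.

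\textbf{Gradient part.} Apply It\^o's formula to $\|\nabla z_n^\varepsilon(t)\|_H^2$. Write $\nabla(z\times h_j)=\nabla z\times h_j + z\times\nabla h_j$. The quadratic variation term is bounded by $\varepsilon C_h\int_0^t\|z_n^\varepsilon\|_{H^1}^2\,ds$, using $\|h_j\|_{W^{1,\infty}}\le C\|h_j\|_{H^3}$. In the martingale term the part $\langle\nabla z\times h_j,\nabla z\rangle_H$ vanishes, leaving
\[
M_n(t)=-2\sqrt{\varepsilon}\sum_j\int_0^t\langle z_n^\varepsilon\times\nabla h_j,\nabla z_n^\varepsilon\rangle_H\,dW_j(s).
\]
Its quadratic variation is at most $C_h\varepsilon\int_0^t\|z_n^\varepsilon\|_{H^1}^4\,ds$. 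Set $Y(t)=\sup_{s\le t}\|z_n^\varepsilon(s)\|_{H^1}^2$ and combine with the $L^2$ part. The BDG inequality with constant $\sqrt{p}$ and Minkowski's inequality, as in Lemma~\ref{lemma1}, should give
\[
\bigl(\mathbb E\, Y(T)^p\bigr)^{2/p}\le 3\|x_n-x\|_{H^1}^4+C(\varepsilon^2+p\varepsilon)\int_0^T\bigl(\mathbb E\,Y(s)^p\bigr)^{2/p}ds .
\]
Gronwall then yields $(\mathbb E\, Y(T)^p)^{2/p}\le 3\|x_n-x\|_{H^1}^4\exp\{C(\varepsilon^2+p\varepsilon)T\}$.

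\textbf{Conclusion.} Choose $p=2/\varepsilon$ and apply Chebyshev's inequality:
\[
\varepsilon\log\mathbb P(Y(T)>2\delta)\le 2\log\|x_n-x\|_{H^1}+C_T-\log(2\delta)+C,
\]
uniformly in $\varepsilon\in(0,1]$. This tends to $-\infty$ as $n\to\infty$ because $\|x_n-x\|_{H^1}\to0$, which proves the lemma.

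\textbf{Main obstacle.} The delicate point is to get BDG with the linear-in-$p$ constant under the square root. This requires bounding the quadratic variation $\int\|z\|_{H^1}^4\,ds$ by $Y(T)\int_0^T\|z\|_{H^1}^2\,ds$, so that after Young's inequality a closed Gronwall inequality in $(\mathbb E Y^p)^{2/p}$ results. The same device was used in Lemma~\ref{lemma1}; here it is actually simpler, since everything is linear and no localization by stopping times is needed.
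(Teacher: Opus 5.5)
Your argument is correct, and it is more direct than the route the paper indicates. The paper only says the proof is analogous to Lemmas~\ref{lemma2} and~\ref{lemma3}. The template there is:
\begin{itemize}
\item localize with stopping times built from a priori bounds (Lemma~\ref{lemma1}, or \cite[Lemma 5.2]{ROCKNER2012716} for $v$);
\item apply Gronwall up to the stopping time and use BDG with $p=2/\varepsilon$;
\item remove the localization via exponential tail estimates.
\end{itemize}

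You instead use the fact that $z_n^\varepsilon=v_n^\varepsilon-v^\varepsilon$ solves a linear, driftless equation. As a result, the $H$-estimate is pathwise, because the martingale vanishes identically. The term $\langle\nabla z_n^\varepsilon\times h_j,\nabla z_n^\varepsilon\rangle_H$ drops out of the gradient martingale. Every bound is homogeneous in $z_n^\varepsilon$, with $\|x_n-x\|_{H^1}$ as the only source term.

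This gains you two things. First, you need no localization or tail estimates for $v^\varepsilon$ and $v_n^\varepsilon$. Second, the uniformity in $\varepsilon\in(0,1]$ of the limit $n\to\infty$ becomes transparent. The second point is not cosmetic. The Lemma~\ref{lemma3} template bounds the It\^o correction by the $n$-independent quantity $\varepsilon C_h\int_0^t(\|u_n^\varepsilon\|_{H^1}^2+\|u^\varepsilon\|_{H^1}^2)\,ds$. Transcribed verbatim for $v$, that term would not vanish as $n\to\infty$, whereas you keep it in the form $\varepsilon C\int_0^t\|z_n^\varepsilon\|_{H^1}^2\,ds$.

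Two minor remarks.
\begin{itemize}
\item The Young-inequality device in your last paragraph is not needed. Minkowski in $L^{p/2}$ already gives $\bigl\|\int_0^T\|z_n^\varepsilon\|_{H^1}^4\,ds\bigr\|_{L^{p/2}}\le\int_0^T(\mathbb E\,Y(s)^p)^{2/p}\,ds$, which is exactly what your displayed inequality uses. Gronwall also needs $(\mathbb E\,Y(t)^p)^{2/p}$ to be finite; this follows from standard moment bounds for this Lipschitz linear SDE in $H^1$.
\item With $p=2/\varepsilon$, the final bound reads $\log 3+4\log\|x_n-x\|_{H^1}+C(\varepsilon^2+2)T-2\log(2\delta)$. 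These are twice the coefficients you wrote, which changes nothing in the conclusion.
\end{itemize}
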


\begin{proof}
The proof is completely analogous to that of Lemma~\ref{lemma2} and Lemma~\ref{lemma3}. Hence, we omit the details.
\end{proof}

\begin{proof}[Proof of Theorem~\ref{main_theorem}]

We now verify \eqref{exp_equiv} to complete the proof of the theorem. Let \(K>0\) be arbitrary. By Lemma~\ref{lemma2} and Lemma~\ref{lemma6}, there exists a positive integer \(N_{0}\) such that, for all \(0<\varepsilon\le1\),
\[
\mathbb P\Bigl(
\sup_{0\le t\le T}
\|u^\varepsilon(t)-u_{N_0}^\varepsilon(t)\|_{H^1}^{2}
>2\delta
\Bigr)
\le 6e^{-K/\varepsilon},
\]
and
\[
\mathbb P\Bigl(
\sup_{0\le t\le T}
\|v^\varepsilon(t)-v_{N_0}^\varepsilon(t)\|_{H^1}^{2}
>2\delta
\Bigr)
\le 6e^{-K/\varepsilon}.
\]

On the other hand, for this \(N_0\), Lemma~\ref{lemma5} implies that there exists \(0<\varepsilon_0<1\) such that, for every \(0<\varepsilon\le\varepsilon_0\),
\[
\mathbb P\Bigl(
\sup_{0\le t\le T}
\|u_{N_0}^\varepsilon(t)-v_{N_0}^\varepsilon(t)\|_{H^1}^{2}
>2\delta
\Bigr)
\le 6e^{-K/\varepsilon}.
\]

Therefore, for all \(0<\varepsilon\le\varepsilon_0\),
\[
\begin{aligned}
&\mathbb P\Bigl(
\sup_{0\le t\le T}
\|u^\varepsilon(t)-v^\varepsilon(t)\|_{H^1}^{2}
>18\delta
\Bigr) \\
&\le
\mathbb P\Bigl(
\sup_{0\le t\le T}
\|u^\varepsilon(t)-u_{N_0}^\varepsilon(t)\|_{H^1}^{2}
>2\delta
\Bigr)
+
\mathbb P\Bigl(
\sup_{0\le t\le T}
\|u_{N_0}^\varepsilon(t)-v_{N_0}^\varepsilon(t)\|_{H^1}^{2}
>2\delta
\Bigr) \\
&\qquad +
\mathbb P\Bigl(
\sup_{0\le t\le T}
\|v^\varepsilon(t)-v_{N_0}^\varepsilon(t)\|_{H^1}^{2}
>2\delta
\Bigr) \\
&\le 18e^{-K/\varepsilon}.
\end{aligned}
\]
Since \(K>0\) and \(\delta>0\) are arbitrary, \eqref{exp_equiv} follows.
\end{proof}

\paragraph{Data Availability.}
No data were generated or analyzed during this study.

\paragraph{Competing Interests.}
The author declares that there are no competing interests.
\printbibliography
\end{document}